\documentclass[12pt]{article}

\usepackage{amsmath, amsthm,amssymb,fullpage,pifont,enumitem,tikz,adjustbox}
\usepackage{xcolor,hyperref,cleveref}
\usetikzlibrary{arrows.meta,positioning,bending,decorations.pathmorphing}

\newtheorem{thm}{Theorem}
\newtheorem{lemma}[thm]{Lemma}
\newtheorem{definition}[thm]{Definition}

\renewcommand{\S}{\mathcal{S}}
\newcommand{\K}{\mathcal{K}}
\newcommand{\ep}{\varepsilon}
\newcommand{\C}{\mathcal{C}}
\newcommand{\D}{\mathcal{D}}
\newcommand{\N}{\mathbb{N}}

\newcommand{\sbeq}{\subseteq}
\newcommand{\forw}[1]{(#1)^\rightarrow}
\newcommand{\back}[1]{(#1)^\leftarrow}

\newcommand{\arc}[2]{(#1)^{#2}}
\newcommand{\arcset}{\{\rightarrow,\leftarrow\}}

\newcommand{\Vres}{V_{\mathrm{res}}}
\newcommand{\Cres}{\mathcal{C}_{\mathrm{res}}}

\newcommand{\floor}[1]{\left\lfloor#1\right\rfloor}
\newcommand{\ceil}[1]{\left\lceil#1\right\rceil}

\title{Universality for rainbow oriented cycles in perturbed digraphs}
\author{Robert A.~Krueger\footnote{Department of Mathematical Sciences, Carnegie Mellon University, Pittsburgh, PA, USA. Research supported by NSF Award DMS-2402204. Email: \texttt{rkrueger@andrew.cmu.edu.}} \and David Staudinger\footnote{Department of Mathematical Sciences, Carnegie Mellon University, Pittsburgh, PA, USA. Email: \texttt{dstaudin@andrew.cmu.edu.}}}
\date{}

\begin{document}

\maketitle

\begin{abstract}
A randomly perturbed digraph is an $n$-vertex directed graph with all out- and in-degrees linear in $n$, to which a linear number (depending on the degree) of random edges have been randomly added. We show that randomly perturbed digraphs whose edges have been colored uniformly with $n$ colors have a rainbow copy of every orientation of every possible length cycle, simultaneously, with high probability. This is a common generalization of work of Araujo, Balogh, Krueger, Piga, and Treglown in the uncolored setting and Katsamaktsis, Letzter, and Sgueglia for consistently oriented spanning cycles. Our proof uses Montgomery's distributive absorption method.
\end{abstract}

\section{Introduction}\label{sec:intro}

Determining whether a given graph has a spanning cycle, a cycle which contains all the vertices of a graph (also called a Hamiltonian cycle), is one of Karp's original NP-complete problems~\cite{Karp}. Given this, it is natural to ask for sufficient conditions for a graph to contain a spanning cycle, and there are several classical theorems to this effect. A foundational example from extremal graph theory is Dirac's Theorem~\cite{Dirac}, which states that every $n$-vertex graph $G$ with \emph{minimum degree} $\delta(G) \geq n/2 > 1$ contains a spanning cycle. This is best possible, since $K_{\floor{(n-1)/2}, \ceil{(n+1)/2}}$ and the union of $K_{\floor{(n+1)/2}}$ and $K_{\ceil{(n+1)/2}}$ which share only one vertex are both $n$-vertex graphs with minimum degree $\floor{(n-1)/2}$ and no spanning cycle, the reasons being that the former has too large an independent set, and the latter is not $2$-connected.

Alternatively, if the edges of the graph are randomly distributed, then much fewer edges are needed. Let $G(n,p)$ be the \emph{binomial (Erd\H{o}s--R\'enyi) random graph} on $n$ vertices, where each possible edge is included independently with probability $p$. P\'osa~\cite{Posa} showed that for $p = C\log n/n$ with $C$ a sufficiently large constant, $G(n,p)$ contains a spanning cycle \emph{with high probability (w.h.p.)}, that is, with probability tending to $1$ as $n$ tends to infinity. As made precise by a `hitting time' theorem~\cite{AKSz,Bollobas}, the main obstacle preventing $G(n,p)$ from having a spanning cycle is a vertex of degree at most $1$.

Bohman, Frieze, and Martin~\cite{BFM} introduced an interpolation between Dirac's Theorem and P\'osa's result: a \emph{randomly perturbed graph} is a graph on top of which some random edges have been added. They showed that for every $\alpha > 0$, there exists $C > 0$ such that if $G_0$ is an $n$-vertex graph with $\delta(G_0) \geq \alpha n$, then $G_0 \cup G(n,C/n)$ contains a spanning cycle w.h.p., where we take the vertex set of the `deterministic' graph $G_0$ and the random graph $G(n,C/n)$ to be the same. An interesting feature of these randomly perturbed graphs is that they require less random and less deterministic edges than the purely random or purely deterministic counterparts. Informally, this is because the deterministic edges avoid problematic vertices of low degree, while the random edges avoid large independent sets and low connectivity. Note that, up to the dependence of $C$ on $\alpha$, this theorem is tight: if $G_0 = K_{\alpha n,(1-\alpha)n}$, then a linear number of random edges are needed to contain a spanning cycle w.h.p. Since their introduction, randomly perturbed graphs have been the subject of much research in probabilistic combinatorics, often generalizing from spanning cycles to other structures. Even this initial result of Bohman, Frieze, and Martin has come under close analysis recently, with a more precise understanding of how many random edges are needed if $\alpha = o(1)$~\cite{HKMMMP} or $\alpha = 1/2 - o(1)$~\cite{EDVR}.

\subsection{Spanning cycles in directed graphs}

For a directed graph $D$, we denote by $\delta^0(D)$ the \emph{minimum semi-degree} of $D$, which is the minimum of the minimum in-degree of $D$ and the minimum out-degree of $D$. A directed analogue of Dirac's Theorem is given by a corollary of a theorem of Ghouila-Houri~\cite{GH}: every $n$-vertex digraph $D$ with $\delta^0(D) \geq n/2$ contains a \emph{consistently oriented} spanning cycle, that is, a cycle $v_1, v_2, \dots, v_n$ where the edges are directed as $\forw{v_{i} v_{i+1}}$ (from $v_i$ to $v_{i+1}$) for all $i \in [n]$ (index addition is considered modulo $n$). Similar to Dirac's Theorem, this is best possible. Much later and with modern techniques, DeBiasio, K\"uhn, Molla, Osthus, and Taylor~\cite{DKMOT} showed that this semi-degree is enough to guarantee every orientation of a spanning cycle (at least for $n$ sufficiently large), except possibly the \emph{alternating orientation}, where $n$ is even and the edges alternate between backwards and forwards around the cycle. DeBiasio and Molla~\cite{DM} showed that $\delta^0(D) \geq n/2 + 1$ forces the alternating spanning cycle, and this semi-degree condition is best possible. It is worth noting that the classical rotation/extension technique used in for Dirac's Theorem and P\'osa's proof for spanning cycles in $G(n,p)$ can be made to work for consistently oriented cycles, but not for arbitrary orientations.

Let $D(n,p)$ denote the \emph{binomial random digraph} on $n$ vertices, where each of the $n(n-1)$ possible edges are included independently with probability $p$. A clever coupling of $D(n,p)$ with $G(n,p)$ due to McDiarmid~\cite{McDiarmid} gives good bounds on the appearance of any particular orientation of a spanning cycle in $D(n,p)$. The precise threshold was determined by Montgomery~\cite{MontCycles}, who additionally showed a \emph{universality} result: if the edges are added randomly and sequentially, by the time the random digraph contains the consistently oriented spanning cycle, it also contains every other orientation of a spanning cycle, w.h.p.

In their original paper on randomly perturbed graphs, Bohman, Frieze, and Martin~\cite{BFM} also showed that randomly perturbed digraphs contain the consistently oriented spanning cycle w.h.p. The following theorem extends this result to all orientations, and moreover achieves universality, finding all orientations simultaneously.

\begin{thm}[Araujo, Balogh, Krueger, Piga, Treglown~\cite{ABKPT}]\label{thm:ABKPT}
For every $\alpha > 0$, there exists $C > 0$ such that if $D_0$ is an $n$-vertex digraph with minimum semi-degree $\delta^0(D_0) \geq \alpha n$, then $D_0 \cup D(n,C/n)$ contains every orientation of every cycle of length between $2$ and $n$, simultaneously, with high probability.
\end{thm}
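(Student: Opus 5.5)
We plan to derive Theorem~\ref{thm:ABKPT} by the absorption method, handled separately for short and long cycles, with all the randomness exposed once so that a single family of structures serves every orientation simultaneously. The basic tool is that $D(n,C/n)$ is, w.h.p., a good expander in the following sense. For every pair of disjoint sets $A,B$ with $|A|,|B|\geq \beta n$, there are edges in both directions between $A$ and $B$. Here $\beta$ is small in terms of $\alpha$ and $C$ is large in terms of $\beta$. The union bound over pairs $(A,B)$ costs $4^n$, while the failure probability for a fixed pair is $e^{-C\beta^2 n}$. Combined with $\delta^0(D_0)\geq\alpha n$, this lets us connect any two vertices by paths of any prescribed orientation and of bounded length. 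From a vertex, step in $D_0$ along the required direction to reach a linear-sized set. Grow such sets from both endpoints and join them by a random edge in the required direction. The same two-sided expansion gives all cycles of length $\ell \leq \ell_0$, for a constant $\ell_0$: fix an orientation, grow out- and in-neighbourhoods in $D_0$ along a prefix of the orientation pattern, and close the cycle with a random edge between two linear sets. For lengths $2\le \ell\le \ell_0$ this is a finite union of w.h.p.\ statements, so short cycles are handled. Length $2$ is a digon, which comes from a random edge $uv$ together with $vu$ lying in $D_0$ or in $D(n,C/n)$.

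For long cycles, of length $\ell$ between $\ell_0$ and $n$, we follow a path-cover, connect, and absorb scheme. First, before fixing any orientation, we reserve a small random set $R$ of size $\gamma n$ and build an absorbing structure in the spirit of Montgomery's distributive absorption, as in the abstract. We use gadgets that can each either include or omit a specified vertex while keeping fixed endpoints. These gadgets are arranged according to a robustly matchable bipartite template, so that any sufficiently small leftover set $L\subseteq R$ can be absorbed. The key point for universality is to build the gadgets so that they work for every orientation. One way is to let each gadget be a bounded-length piece whose internal paths can be re-embedded in any orientation pattern of the right length, using the bounded-length connection lemma above. The alternative is to choose the portion of the target cycle that will be placed on the absorber only after the orientation is known. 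Since the cycle has length at least $\ell_0$, we can cut it into segments that are each a mix of forward and backward runs, and absorbers that accept segments of every orientation of length $k$ require only $2^k$ types. That is constant, so it is affordable.

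Second, we cover most of $V\setminus R$, or a subset of the right size when $\ell<n$, by long oriented paths following the prescribed pattern. For this we use a DFS/greedy embedding in $D_0\cup D(n,C/n)$. The expansion property guarantees that an oriented path of any pattern can be grown until fewer than $\beta n$ vertices remain. Third, we link these paths and the absorber into a single cycle using the short connecting paths, taken with vertices from $R$. Finally, we absorb the uncovered vertices, or none of them when $\ell<n$, by choosing the sizes appropriately. For $\ell$ much less than $n$, we simply do not need absorption, only the exact length, which is obtained by adjusting the lengths of the connectors.

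The main obstacle is the absorber: making one random structure that absorbs vertices for \emph{every} orientation simultaneously, when the orientation of the cycle segment passing through each gadget is not known in advance. I would first try to make each gadget orientation-flexible by building it from $D_0$-neighbourhood expansions closed by random edges in both directions. Both directions exist w.h.p.\ between linear sets, so a gadget can be realised for any of the $2^{k}$ local patterns. A union bound over the constantly many patterns per gadget is then harmless.
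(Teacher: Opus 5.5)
Your supporting lemmas (connecting, short cycles, DFS long path, reservoir plus template) are sound and match the paper's architecture. The gap is the absorber, which is where the whole difficulty of the theorem lies, and your design for it fails.

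Your gadgets ``either include or omit a specified vertex while keeping fixed endpoints'', so an activated gadget contributes a path of length $k$ or $k+1$. For a non-consistent $\star$, every later gadget, and the point where the rest of the cycle resumes, is then read at a position of $\star$ that depends on how many earlier gadgets were activated. That is decided only at the very end, by the leftover set. So even if you build the absorber after seeing $\star$, no gadget has a well-defined pattern to follow. Montgomery's template fixes only the total number absorbed, not where the shifts occur.

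Your remedy swaps quantifiers. A union bound over the $2^k$ local patterns gives, \emph{for each} pattern, some gadget realising it. It does not give a \emph{single} fixed gadget $A$ in which both $A$ and $A\cup\{v\}$ have spanning $s$--$t$ paths of every pattern. The edges of a fixed gadget have fixed directions, so its internal paths cannot be ``re-embedded'' in another orientation. Whether such orientation-universal gadgets can be found robustly in $D_0\cup D(n,C/n)$ is a substantial question that you do not argue.

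The obstacle you name is also not the real one: no single structure needs to serve all orientations. The paper fixes $\star$ first and builds the absorber for the pattern $(\star_1,\dots,\star_{\gamma_1 n})$ deterministically, from pseudorandom properties that hold w.h.p.\ simultaneously. ABKPT instead prove failure probability at most $e^{-n}$ per oriented cycle, using McDiarmid's coupling to make random edges bidirected, and take a union bound over the at most $n2^n$ oriented cycles.

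The missing idea is \emph{length invariance}. You need absorbers as in \Cref{def:absorber}: each absorbs exactly one element of a bounded set, and all its absorbing paths have the same length and pattern. The paper's basic gadget works as follows.
\begin{itemize}
\item If $\star_i=\star_{i+1}$ for some small $i$, take a path $w_1\dots w_{18}$ following $\star$ with index $i+1$ deleted. Choose $w_{i-1},w_i$ as suitable neighbours of $v_1$, and $w_{i+1},w_{i+2}$ as suitable neighbours of $v_2$.
\item Detouring through $v_1$ (replacing $w_{i-1}w_i$) or through $v_2$ (replacing $w_{i+1}w_{i+2}$) both give length $18$ and pattern $(\star_1,\dots,\star_{18})$. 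The edge $w_iw_{i+1}$ is read at position $i+1$ or $i$, which is harmless since $\star_i=\star_{i+1}$.
\item A separate construction handles alternating stretches.
\end{itemize}
These pair gadgets turn a $K_{2,40}$ found by supersaturation into a local absorber for an arbitrary $40$-set. Local absorbers are then chained along Montgomery's template to absorb a \emph{prescribed-size} $\mu n$-subset of the reservoir; this is not ``any small leftover set'', so leftovers outside the reservoir are first threaded in by short connectors. For this particular statement, the paper gets it at once from \Cref{thm:main} by ignoring colours.
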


\subsection{Rainbow spanning cycles}

An edge-colored (di)graph is called \emph{rainbow} if all the colors appearing on its edges are distinct. (Here we will always be coloring the edges of the graphs.) The study of rainbow subgraphs goes all the way back to Euler's study of transversals in Latin squares, which are equivalent to rainbow perfect matchings in properly $n$-colored $K_{n,n}$ (and equivalently, $1$-factors in a `properly' $n$-colored $n$-vertex complete directed graph with loops). When any `reasonable' coloring (such as proper edge-colorings or colorings with a bounded number of edges of each color) contains a desired rainbow structure, this is known as an `anti-Ramsey' property. Pertaining to spanning cycles, Andersen conjectured~\cite{Andersen} that every proper edge-coloring of $K_n$ contains a rainbow path of length at least $n-2$, which would be best possible in the length of the path. See~\cite{Pokrovskiy} for a survey on Andersen's and related conjectures; in particular, see~\cite{BM} for the best result on this conjecture to date, and see~\cite{BPS} for a directed version.

The study of rainbow subgraphs also appears in the geometric and topological combinatorics literature as `colorful' or `transversal' versions of various classical theorems, like Carath\'eodory's Theorem and Helly's Theorem~\cite{Barany}. For example, Joos and Kim~\cite{JoosKim} recently proved a transversal version of Dirac's Theorem:  if $G_1, \dots, G_n$ are graphs on the same set of $n\geq 3$ vertices with $\delta(G_i) \geq n/2$ for all $i \in [n]$, then there exists a spanning cycle using exactly one edge from each $G_i$. See~\cite{SWW} for a survey of such transversal results in graphs, digraphs, and hypergraphs.

Often adversarial colorings can be difficult to deal with, so one option for relaxing such problems is to consider a random coloring. As it relates to Andersen's conjecture, Gould, Kelly, K\"uhn, and Osthus~\cite{GKKO} showed that a typical proper coloring of $K_n$ has a spanning rainbow path. The probability distribution of a random proper coloring of $K_n$ is quite difficult to work with, and so a natural further relaxation is a completely random coloring, where each edge receives a color independently from a fixed distribution. Random colorings are also natural from other perspectives; for example, a randomly $n$-colored $K_{n,n}$ is equivalent to a random $3$-uniform $3$-partite hypergraph where each part has $n$ vertices, with a rainbow matching corresponding to a hypergraph matching.

Cooper and Frieze~\cite{CF} showed that if $G(n,p)$ with $p = C\log n/n$ is randomly colored uniformly independently from a set of $Cn$ colors, then there exists a rainbow spanning cycle w.h.p.\ if $C$ is a sufficiently large universal constant. Bal and Frieze~\cite{BF} and Ferber~\cite{Ferber} improved the number of colors to the optimal $n$, while Frieze and Loh~\cite{FL} (sharpened further by Ferber and Krivelevich~\cite{FK}) obtained the same result with $p = (1+o(1))\log n/n$ and $(1+o(1))n$ colors, which are both asymptotically optimal. As in the uncolored version, a coupling trick due to McDiarmid allows one to transfer these results from rainbow spanning cycles in randomly colored $G(n,p)$ to arbitrary orientations of rainbow spanning cycles in randomly colored $D(n,p)$, although Montgomery's universality~\cite{MontCycles} does not immediately transfer to the colored setting.

As for rainbow spanning cycles in randomly perturbed graphs, Anastos and Frieze~\cite{AF} showed that for every $\alpha > 0$, there exists $C_1$ and $C_2$ such that if $G_0$ is an $n$-vertex graph with $\delta(G_0) \geq \alpha n$, then when $G_0 \cup G(n,C_1/n)$ is randomly colored with $C_2n$ colors, there exists a rainbow spanning cycle with high probability. Aigner-Horev and Hefetz~\cite{AHH} improved the number of colors to $(1+o(1))n$, while Katsamaktsis, Letzter, and Sgueglia~\cite{KLS} improved the number of colors to exactly $n$. In fact, they showed the following directed version for consistently oriented cycles, and use McDiarmid's coupling to pull the result back to the undirected setting.

\begin{thm}[Katsamaktsis, Letzter, Sgueglia~\cite{KLS}]\label{thm:KLS}
For every $\alpha > 0$, there exists $C > 0$ such that if $D_0$ is an $n$-vertex digraph with minimum semi-degree $\delta^0(D_0) \geq \alpha n$ and $D_0 \cup D(n,C/n)$ is uniformly edge-colored from $[n]$, then $D_0 \cup D(n,C/n)$ contains a consistently oriented rainbow spanning cycle, with high probability.
\end{thm}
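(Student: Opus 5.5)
We prove Theorem~\ref{thm:KLS} by absorption in the randomly perturbed digraph $D = D_0 \cup D(n,C/n)$. Two kinds of objects must be covered exactly: all $n$ vertices and all $n$ colors. A consistently oriented rainbow spanning cycle has exactly $n$ edges, so it uses every color exactly once. We therefore build, in order, an absorbing structure, a collection of short rainbow paths covering almost everything, a connecting step, and finally an absorption step. Throughout we split the random edges into a constant number of independent rounds $D(n,C'/n)$ by multiple exposure, each with its own independent uniform coloring. We also fix the constants $\alpha \gg \beta \gg \gamma \gg 1/C$.

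\textbf{Step 1: absorbers for vertex--color pairs.} The basic gadget absorbs one vertex $v$ together with one color $c$. It is a short rainbow directed path $P$ from $x$ to $y$, plus a second path $P'$ from $x$ to $y$ on $V(P)\cup\{v\}$ with color set $C(P)\cup\{c\}$. The semi-degree of $D_0$ gives linearly many in- and out-neighbours of $v$. A second-moment or Janson argument over random edges then shows that, for every pair $(v,c)$, there are $\Omega(n^{k})$ such gadgets of bounded size $k$ w.h.p. One natural gadget: $v$ is inserted between consecutive path vertices $u,w$ using deterministic edges $uv, vw$, and random edges of suitable colors swap out an edge of color $c$.

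We then apply Montgomery's distributive absorption. We reserve a random set $\Vres$ of $\beta n$ vertices and a random set $\Cres$ of $\beta n$ colors. Using a bipartite template of bounded degree with the robust matching property, we build a rainbow absorbing path $A$ with the following property: for every $V' \subseteq \Vres$ and $C' \subseteq \Cres$ with $|V'|=|C'|$ of half the size, there is a rainbow path on $V(A) \cup V'$, with the same endpoints, using exactly $C(A)\cup C'$. The template is needed because a pair absorber only absorbs a fixed pair, while we need flexibility over arbitrary leftover sets. So each template edge corresponds to a gadget, and the gadgets are chained vertex-disjointly and color-disjointly via greedy selection, which is possible because each pair has many gadgets.

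\textbf{Step 2: almost-cover and connect.} Outside $A \cup \Vres$, and using colors outside $C(A)\cup\Cres$, we greedily find a rainbow path cover with few paths, say $\gamma n$ of them, covering all but $\ep n$ of the vertices and using all but $\ep n$ of the colors. This works because random edges of a fresh round give rainbow expansion among the large remaining vertex and color sets, as in P\'osa- or DFS-type arguments. The leftover vertices $L$ and leftover colors must then be fed into the reservoir. For this we want $\Vres$ and $\Cres$ to double as connectors: every leftover vertex $v$ can be placed on a short rainbow path through vertices of $\Vres$ with colors of $\Cres$, using the deterministic neighbourhoods of $v$ and the random edges inside $\Vres$. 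We then connect all paths together with $A$ into one cycle through short rainbow connecting paths in $\Vres$ with colors in $\Cres$, and we make sure every leftover color outside $\Cres$ is also used, by routing it on one connecting edge. Finally, the unused parts $V'\subseteq\Vres$ and $C'\subseteq \Cres$ have equal size by counting edges and vertices, and $A$ absorbs them.

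\textbf{Main obstacle.} The hardest step is the connecting and cleanup stage, where both the leftover vertices and the leftover colors outside the reservoir must be consumed exactly, while the unused reservoir parts must keep the size regime the template requires. I would first try to make colors and vertices symmetric: absorb each leftover color $c\notin \Cres$ via a random edge of color $c$ inside $\Vres$, which is available w.h.p. since each color has about $C n/n \cdot$ linear many edges among linear sets. Leftover vertices are handled with deterministic edges to $\Vres$ plus random edges inside $\Vres$ colored from $\Cres$. This requires $\ep \ll \beta$, and it requires the reservoir to be robust to removing $O(\ep n)$ elements, which the robust template provides.
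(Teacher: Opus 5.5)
Your architecture follows the original route of~\cite{KLS}: vertex--color pair gadgets chained along a Montgomery template, then a reservoir cleanup in which each connecting path through $\Vres$ picks up exactly one leftover color. The paper instead proves the general \Cref{thm:main} with separate vertex- and color-absorbers whose absorbing paths all have the same length. For consistent orientation a change of path length is harmless, so your route is legitimate, and your cleanup counting matches \Cref{lem:res}.

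The genuine gap is Step~1, where almost all of the work lies. The claim of $\Omega(n^k)$ gadgets of size $k$ for every pair $(v,c)$ is false. Each gadget must contain an edge of color $c$, and there are at most $10n$ of these (\Cref{def:pseudo}~(\ref{pseudo:avgedges})); the paper stresses that randomly perturbed digraphs do not have gadget densities of this kind. What your greedy selection actually needs is \emph{robust} existence: for every template pair and every forbidden set of $O(\beta n)$ vertices and colors, some gadget avoids it. A second-moment estimate gives at best polynomially small failure probability, which cannot be union bounded over the exponentially many forbidden sets. You need $e^{-\Omega(n)}$ tails, and obtaining them dictates the gadget design.

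Your suggested gadget does not meet these constraints, for two reasons. A fixed color lies on only about $C$ random edges, and on none at all for linearly many colors, so random edges cannot be required to carry $c$. And each pair $(v,c)$ has no edge of color $c$ at $v$ with probability at least roughly $e^{-2}$, so $c$ cannot be placed on an edge at $v$ either. The specified color must sit on a deterministic edge elsewhere, together with a swapping mechanism. The paper supplies exactly this: the $K_{2,4}$ color gadget is found in $D_0$ via \Cref{lem:supersat} and colored via Suen's inequality (\Cref{thm:suen}) with $\Delta=O(\mu)$, which gives color-gadget pseudorandomness (\Cref{lem:cg-pseudorandom}). With that in hand a $(v,c)$-gadget is easy. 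Replace an edge $uw$ of color $d$ by edges $uv,vw$ of whatever colors $a,b$ they carry, and append a $\{d,a\}$-gadget and a $\{b,c\}$-gadget. The two states then differ by exactly $v$ and $c$.

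Two further points need repair.

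First, the absorbing property you state, for arbitrary $V'\subseteq\Vres$ and $C'\subseteq\Cres$, cannot come from a single bounded-degree template on $\Vres\times\Cres$: given $v\in V'$, just choose $C'$ disjoint from the at most $40$ neighbours of $v$. \Cref{lem:Mont} gives only one-sided flexibility, since $Y$ and $Z$ are always absorbed. You therefore need, for example, two templates:
\begin{itemize}
\item one matching flexible vertices together with a fixed vertex set to a fixed color set;
\item one matching flexible colors together with a fixed color set to a fixed vertex set;
\end{itemize}
with all fixed parts built into $A$.

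Second, in the cleanup you should not look for a leftover color $c$ on a random edge inside $\Vres$. The expected number of such edges is about $C\beta^2=O(1)$, so linearly many colors have none. Use deterministic edges instead. The edges of color $c$ contain a matching of size $\alpha n/3$ (\Cref{def:pseudo}~(\ref{pseudo:match})), and a random $\Vres$ captures $\Omega(\alpha\beta^2 n)$ of its edges, exactly as in \Cref{lem:res}.
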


Our main result is the common generalization of \Cref{thm:ABKPT} and \Cref{thm:KLS}.

\begin{thm}\label{thm:main}
For every $\alpha > 0$, there exists $C > 0$ such that if $D_0$ is an $n$-vertex digraph with minimum semi-degree $\delta^0(D_0) \geq \alpha n$ and $D_0 \cup D(n,C/n)$ is uniformly edge-colored from $[n]$, then $D_0 \cup D(n,C/n)$ contains a rainbow copy of every orientation of every cycle of length between $2$ and $n$, simultaneously, with high probability.
\end{thm}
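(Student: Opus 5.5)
The plan is to follow Montgomery's distributive absorption framework, adapted to carry colors and vertices in parallel. First I fix the randomness in stages. Split $D(n,C/n)$ into a few independent copies $D_1\cup D_2\cup D_3$, each of density $C'/n$. Then reveal colors only when they are needed. Short cycles (length at most $\ep n$, for small $\ep$) are handled directly. By the regularity-type structure of $D_0$ together with one random round, every orientation of a cycle of length $\ell\le \ep n$ can be embedded greedily as a path of $D_0$-edges, closed by connecting gadgets. Since only $\ell\le\ep n$ colors are used, at most $\ep n$ colors are forbidden at each step, and each step has linearly many candidate edges with independent uniform colors. So a union bound over the at most $n2^{n}$ (length, orientation) pairs is not available directly. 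Instead I prove a deterministic pseudorandom property of the colored graph which holds w.h.p., and then show that this property implies every cycle embeds. This ``property first'' approach is how universality over all $2^\ell$ orientations is obtained; I will use it throughout.

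For long cycles, the key steps are as follows. (i) \emph{Absorbing structure.} Fix a small reservoir $\Vres$ of vertices and a reservoir $\Cres$ of colors. Using a template graph with the robust matching property (Montgomery's bipartite graph of max degree $O(1)$), build many vertex-and-color gadgets. Each gadget is a short oriented path segment with two alternative routes: one uses a designated vertex $v$ and color $c$, the other avoids both. These gadgets must be oriented-agnostic. I handle this by noting that any long oriented cycle contains linearly many disjoint windows with a fixed local pattern: a forward-forward, backward-backward, or alternating stretch of length $k$. By pigeonhole I select one pattern class and build absorbers for each of the $O(1)$ possible local patterns. Min semi-degree $\alpha n$ gives both in- and out-neighborhoods of linear size, so each pattern can be realized in $D_0$, using random edges to connect the pieces. (ii) \emph{Covering.} Embed the bulk of the cycle as an almost-spanning rainbow linear forest of the prescribed orientation. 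I do this by partitioning into absorbers and linking paths, via a greedy or expansion-based (Friedman--Pippenger style) embedding in $D_0\cup D_2$, with colors from the main color pool. This leaves a leftover set $L\subseteq \Vres$ and $L'\subseteq \Cres$ of sizes at most $\ep' n$. (iii) \emph{Balancing and connecting.} Connect the pieces into a cycle through the reservoir using short random paths in $D_3$, whose orientation matches the required pattern. Since $D(n,C/n)$ is an expander, any orientation of a path of length $O(\log n)$ can be found between prescribed endpoints, avoiding a small forbidden set. (iv) \emph{Absorption.} The template guarantees that for any leftover set of vertices and colors of the right size, the gadgets absorb exactly those, yielding a rainbow spanning copy. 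One point needs care: the numbers of leftover vertices and leftover colors must match. This equality must be set up so that $|V|=n$ edges use all $n$ colors. Vertex and color absorption should therefore be coupled, with one gadget absorbing a vertex-color pair. Arbitrary pairings work because leftovers are pairable.

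The main obstacle I expect is step (i) combined with universality. The absorbing structure must be fixed before the orientation is known, yet it must work for every orientation simultaneously. I would try to make absorbers depend only on the local pattern type. I would then pre-build a separate absorbing system for each of the constantly many types, each using a disjoint slice of reservoir vertices and colors. Given a target cycle, I use the system matching its dominant pattern and release the other slices back into the covering step as ordinary vertices and colors. A second, quantitative difficulty is colors. Every color must be used exactly once for spanning cycles. The color-absorbers must therefore use $D_0$-edges, whose colors are random but fixed. I handle this by showing w.h.p. that each color appears on linearly many edges inside each required structured pair of neighborhoods, using Chernoff bounds and a union bound over the $n$ colors and the polynomially many vertex tuples.
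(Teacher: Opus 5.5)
Your step (i) has a genuine gap. Your gadget has one route through $v$ (using $c$) and one avoiding both, so its two activations give paths whose lengths differ by one. For consistently oriented cycles this is harmless. For an arbitrary $\star$, however, each gadget that takes the longer route shifts by one the position in $\star$ at which everything after it is traversed. Which gadgets take the longer route depends on the leftover set, which is unknown when the absorber is built. So the pre-built downstream gadgets and connecting paths end up with the wrong orientation, unless $\star$ is consistently oriented from that point on. Choosing windows with a fixed local pattern does not help: the windows are separated by arbitrary stretches of $\star$, and it is those stretches that get misaligned.

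The missing idea, around which the paper is organized, is to use only absorbers all of whose activations are paths of the \emph{same} length and orientation. These are $(\mathcal{S},\star)$-absorbers, which absorb \emph{exactly one} member of a prescribed family. The paper builds them in layers:
\begin{itemize}
\item swap gadgets absorbing exactly one of $\{c_1,c_2\}$ (a $K_{2,4}$ with repeated colors, found via supersaturation and Suen's inequality), or exactly one of $\{v_1,v_2\}$ (rainbow paths plus color gadgets, with a case split on the local pattern of $\star$);
\item a rainbow $K_{2,40}$ absorbing exactly one of $40$ triples $\{v_i,c_i,d_i\}$;
\item local absorbers for arbitrary $40$-sets, obtained by swapping those triples out with chains of gadgets.
\end{itemize}
Only then are the local absorbers plugged into Montgomery's template. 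Also, since you argue ``property first'', the absorber can simply be built after $\star$ is fixed. Separate pre-built systems for each pattern type are unnecessary.

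Your balancing and connecting steps also fail as written. Gadgets exist only for vertex--color pairs fixed in advance, while the leftovers are not known in advance, so ``arbitrary pairings work'' is unjustified. Connecting through the reservoir with edges of $D_3$ alone cannot work either. At density $C'/n$, linearly many vertices have no out-edge (or no in-edge) in $D_3$, and linearly many colors appear on no edge of $D_3$. So you can neither reach prescribed endpoints nor force a prescribed leftover color onto a connector. In addition, paths of length $O(\log n)$ are too long if there are linearly many pieces.

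The paper instead keeps separate vertex and color global absorbers on $V_{\mathrm{res}}$ and $\mathcal{C}_{\mathrm{res}}$. It strings the leftovers together with length-$7$ paths through the reservoir. The middle edge of each path has the required leftover color $c$, taken from a large color-$c$ matching inside $V_{\mathrm{res}}$. The other edges come from the semi-degree, color diversity and edge-color pseudorandomness. Each such path swallows one leftover vertex and one leftover color, leaving exactly $\mu n$ reservoir vertices and colors for the global absorbers. Finally, your short-cycle argument cannot produce cycles of length $2$, which the paper handles directly.
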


The proofs of \Cref{thm:ABKPT}, \Cref{thm:KLS}, and \Cref{thm:main} are all based on the absorbing method, which is a class of techniques whose goal is to turn an almost-spanning structure into a spanning structure. In this setting, finding an almost-spanning structure is fairly easy (see \Cref{lem:longpath}), so the heart of the matter is in setting up an absorber. The absorbing method was first laid out by R\"odl, Ruci\'nski, and Szemer\'edi~\cite{RRSz}, and has been developed into a host of very powerful techniques, used to solve many famous conjectures, such as the existence of designs~\cite{Keevash,GKLO,DP}. We use a type of absorption called distributive absorption (sometimes called template absorption), invented by Montgomery~\cite{Mont}. While the proofs of both \Cref{thm:ABKPT} in~\cite{ABKPT} and \Cref{thm:KLS} in~\cite{KLS} also use distributive absorption, their proofs differ from ours in interesting ways. Our use of distributive absorption is similar to that in~\cite{ABKPT}, while their `absorbing gadgets' were relatively simple compared to ours and make crucial use of a coupling which is unavailable to us (see the Remark below). Distributive absorption is used quite differently in~\cite{KLS} compared to us, while their gadget inspires the design of one of our gadgets. Of independent interest may be our more general framework for finding our gadgets in randomly perturbed digraphs, which avoids the poor parametric dependencies caused by the use of Szemer\'edi's Regularity Lemma in~\cite{KLS}. We sketch our proof in \Cref{sec:sketch}.

\paragraph{Remark.}\label{Remark1}
A natural approach to achieving a universality result is to show that the probability that any one individual object is not present is incredibly small. Indeed, this is how the proof of \Cref{thm:ABKPT} runs: they show that $D_0 \cup D(n,C/n)$ contains a given oriented cycle with probability at least $1-e^{-n}$, which is small enough that the union bound over all at most $n2^n$ oriented cycles goes through. They use McDiarmid's coupling to treat the random edges of $D_0 \cup D(n,C/n)$ as `bidirected,' making it significantly easier to find a given oriented cycle. It is suggested in~\cite{KLS} that their method together with ideas from~\cite{ABKPT} could prove a rainbow version of such a result, that is, they suggest to prove that with probability at least $1-e^{-n}$, a randomly colored $D_0 \cup D(n,C/n)$ contains a rainbow copy of a given oriented cycle. Unfortunately, this cannot be true: the probability that $D_0 \cup D(n,C/n)$ misses a given color is approximately $(1-1/n)^{\alpha n^2} \approx \exp(-\alpha n)$, in which case we cannot find a rainbow spanning cycle of any orientation. This prevents us from using `bidirected' edges via the McDiarmid coupling strategy from~\cite{ABKPT} and forces us to obtain universality head-on.

\paragraph{Organization.} We give a sketch of the proof of \Cref{thm:main} in \Cref{sec:sketch}. In \Cref{sec:prelim}, we state and prove some preliminary lemmas about concentration, randomly perturbed graphs, and supersaturation which are used in the construction of the absorber. \Cref{sec:gadgets}, \Cref{sec:local_abs}, and \Cref{sec:global_abs} concern the construction of the absorber; each subsequent section uses the constructions of the previous section as a black box to construct a more versatile absorber. In \Cref{sec:mainproof}, we apply the absorber to prove \Cref{thm:main}.

\subsection{Notation}\label{sec:intro:notation}

A \emph{digraph} $D$ consists of a vertex set $V(D)$ and a set $E(D) \subseteq \{ (u,v) : u,v \in V(D)\}$ of ordered pairs of vertices called (directed) edges. Our digraphs will not have multiple edges or loops (edges of the form $(v,v)$), but it is possible to have both $(u,v)$ and $(v,u)$ as distinct edges. We denote the edge from $u$ to $v$ by $\forw{uv}$ or by $\back{vu}$; for $\star \in \arcset$, we denote by $(uv)^\star$ the appropriate forward or backward edge, and we denote by $-\star$ the opposite orientation of $\star$. For $\star \in \arcset$ and $U,V \subseteq V(D)$, we let $E^\star(U,V) = \{(uv)^\star \in E(D) : u \in U, v \in V\}$.

The \emph{out-neighborhood} (resp.\ \emph{in-neighborhood}) of a vertex $v \in V(D)$, denoted $N^{\rightarrow}(v)$ (resp.\ $N^{\leftarrow}(v)$), is $\{u \in V(D) : \forw{vu} \in E(D)\}$ (resp.\ $\back{vu}$). The \emph{out-degree} (resp.\ \emph{in-degree}) of $v$, denoted $d^{\rightarrow}(v)$ (resp.\ $d^{\leftarrow}(v)$), is $|N^{\rightarrow}(v)|$ (resp.\ $|N^{\leftarrow}(v)|$). The minimum out-degree (resp.\ in-degree) of $D$, denoted $\delta^{\rightarrow}(D)$ (resp.\ $\delta^{\leftarrow}(D)$), is the minimum of $d^{\rightarrow}(v)$ (resp.\ $d^{\leftarrow}(v)$) over all $v \in V(D)$. The \emph{minimum semi-degree} of $D$, denoted $\delta^0(D)$, is the minimum of $\delta^{\rightarrow}(D)$ and $\delta^{\leftarrow}(D)$.

A \emph{(directed) path} $P$ is a digraph with a specified ordering of its vertices $v_1, \dots, v_{k+1}$ such that the only edges are $\arc{v_i v_{i+1}}{\star_i}$ for some $\star_i \in \arcset$, for every $i \in [k]$. We say that $P$ has \emph{length} $k$, \emph{orientation} $\star = (\star_1,\dots,\star_k) \in \arcset^k$, \emph{startpoint} $v_1$, and \emph{endpoint} $v_{k+1}$. We call $v_2, \dots, v_k$ the \emph{interior} vertices of $P$. We say that $P$ is \emph{consistently oriented} if $\star_i = \rightarrow$ for every $i$, and we say that $P$ is \emph{alternating} if $\star_i \neq \star_{i+1}$ for every $i$. 

An edge-colored digraph $D$ is called \emph{rainbow} if no pair of edges of $D$ have the same color, including edges between the same vertices but in opposite directions. For an edge-colored digraph $D$, let $\mathcal{C}(D)$ denote the set of colors on the edges of $D$. For $v \in V(D)$ and $\star \in \arcset$, we let $\C^\star(v)$ denote the set of colors on edges of the form $(vu)^\star$ for $u \in V(D)$. For a set of colors $\C$, we let $E_{\C}(D)$ denote the set of edges of $D$ with a color in $\C$. Additionally, for $\star \in \arcset$, and $v \in V(D)$, we let $N^\star_\C(v) = \{u \in V(D): (vu)^\star \in E_\C(D)\}$ and $d^\star_\C(v) = |N^\star_\C(v)|$. Whenever $\C$ consists of a just a single color $c$, we write $d^\star_c(v)$ instead of $d^\star_{\{c\}}(v)$, etc. 

By $D(n,p)$ we denote the $n$-vertex random digraph where each edge is present independently with probability $p$. Whenever we discuss randomly perturbed digraphs $D_0 \cup D(n,p)$, we assume that $D_0$ is an $n$-vertex graph and that $D_0$ and $D(n,p)$ have the same vertex set. When we randomly color $D_0 \cup D(n,p)$, we assume that each edge receives a color uniformly from a set of $n$ colors, and that the edges receive their colors independently of each other and of the randomness in $D(n,p)$. We say that an event happens `with high probability' (abbreviated w.h.p.) if the probability of that event goes to $1$ as $n$ goes to infinity.

Throughout our various lemmas, many parameters make appearances; we usually use the same Greek letter to denote parameters with the same function in different lemmas. Here is a glossary of parameters and their associations:
\begin{itemize}
\item $\alpha$: minimum semi-degree
\item $\nu$: forbidden set of vertices or colors
\item $\mu$, $\beta$: size of the global absorber
\item $\zeta$, $\eta$: size of leftover vertex/color set
\item $\ep$: pseudorandomness condition
\item $C$: random edge probability
\end{itemize}
Parameters with the same letters appearing in the different lemmas will have slightly different values; for example, as the argument progresses, more vertices are forbidden, and so $\nu$ will slightly increase. The parameters are globally ordered as follows:
\[ 1/n \ll 1/C \ll \ep \ll \eta, \zeta \ll \mu,\beta \ll \nu \ll \alpha ,\]
where $\beta \ll \gamma$ means that for every choice of $\gamma$, there exists a sufficiently small $\beta$ which makes the argument work. Usually the dependence among the parameters is polynomial, and the exact dependence can be inferred from the proofs. We ignore floors and ceilings wherever they are inconsequential, assuming an expression like $\alpha n$ is always an integer.

\section{Proof sketch}\label{sec:sketch}

The absorption method was first formally introduced by R\"odl, Ruci\'nski, and Szemer\'edi~\cite{RRSz} and has since become a fundamental technique in extremal and probabilistic combinatorics with many impressive applications. The purpose of the technique is to turn an almost spanning structure into a spanning structure. For our rainbow spanning cycles, the structure is spanning in two senses: it must contain all vertices, and it must contain all colors. As such, we build two separate absorbers, one to absorb vertices and one to absorb colors, but they share building blocks.

The original absorption method of~\cite{RRSz} was used to prove an analogue of Dirac's Theorem for $3$-uniform hypergraphs, and proceeded as follows. For each vertex $v$ in an $n$-vertex (hyper)graph, we find \emph{many} $O(1)$-size `gadgets' $A$ such that both $A$ and $A \cup \{v\}$ have spanning paths with the same start- and end-points. By randomly selecting $\Omega(n)$ gadgets across all $v$ (and thinning them to be vertex disjoint and connecting them) to create an `absorber' $\mathcal{A}$, we have that \emph{every} vertex $v$ has $\Omega(n)$ gadgets inside $\mathcal{A}$. This means that we can greedily absorb any $o(n)$ vertices into a path spanning $\mathcal{A}$. Given a nearly spanning cycle which incorporates $\mathcal{A}$, the leftover vertices not on the cycle can be absorbed into $\mathcal{A}$, yielding a spanning cycle.

In order for randomly selecting the gadgets to form the absorber to work, we really do need \emph{many} gadgets (quantified as $\Omega(n^{|A|})$ many) for every vertex, which randomly perturbed graphs usually do not have. Instead, we may substitute this `many gadgets' requirement with a weaker \emph{robust} local absorption property: for every vertex, there are $\Omega(n)$ \emph{disjoint} gadgets, or equivalently, for every vertex $v$ and $o(n)$-size set of forbidden vertices, there is a gadget for $v$ avoiding those forbidden vertices. With this property, we can construct an absorber for any $o(n)$-size set of vertices by greedily finding pairwise disjoint gadgets for each vertex (and connecting them). We build such an absorber on a special `reservoir' of $o(n)$ vertices; given a nearly spanning cycle on the graph excluding the reservoir and associated absorber, we incorporate the leftovers into cycle using the reservoir, and absorb the remaining vertices of the reservoir using the absorber.

This strategy breaks down when we cannot absorb just one vertex at a time, say, in the setting of \Cref{thm:KLS}: we need to be concerned with what colors are used depending on how we `activate' a gadget $A$ for $v$, that is, whether we use the path in $A$ or the path in $A \cup \{v\}$. Since these paths have different lengths, the (natural) approach taken by Katsamaktsis, Letzter, and Sgueglia~\cite{KLS} was to find gadgets $A$ for vertex-color pairs $(v,c)$, where $A$ and $A \cup \{v\}$ have spanning paths with the same start- and end-points and using the same set of colors, aside from $c$, which is used only on the path from $A \cup \{v\}$. There is simply not enough space to find pairwise \emph{disjoint} gadgets for quadratically many vertex-color pairs. To get around this in~\cite{KLS}, they build an absorber for a special linear-size set $\mathcal{P}$ of vertex-color pairs which is `robustly matchable,' meaning that for any set of remaining vertices and colors, there exists a perfect matching between the remaining vertices and colors such that every matched pair is in $\mathcal{P}$. This $\mathcal{P}$ was sourced from a sparse robustly matchable bipartite graph due to Montgomery~\cite{Mont} (see \Cref{lem:Mont}).

A different complication arises for arbitrary orientations of directed cycles, as in~\cite{ABKPT}: depending on how an gadget is activated, we may vary the length of the path found, and thus alter the pattern of the orientation for the subsequent gadgets. A solution to this problem is to construct gadgets which always produce the same length of path, no matter how they are activated. This means that we cannot have a gadget which absorbs a vertex or not; rather, in~\cite{ABKPT}, they used gadgets which absorbed \emph{exactly} one vertex out of a $O(1)$-size set. (To control the number of different $O(1)$-size sets they needed to absorb, they used the same sparse robustly matchable bipartite graph of Montgomery as a scaffolding, although in a different way from~\cite{KLS}.) They were able to find such gadgets fairly easy by exploiting the McDiarmid coupling to a model where the random edges were `bidirected,' so that the pattern of the orientation did not matter for them; still the main issue was planning for the directions of the deterministic edges. Since, in our goal of universality (see the remark at the end of \Cref{Remark1}), we are unable to exploit this coupling to get bidirected edges, the desired gadgets are much more difficult to construct.

In order to deal with the two previous issues, we developed the following definition of an $\mathcal{S}$-absorber, which has the key property that no matter how it is activated, it always produces the same length of path. For a set $S$ of vertices and/or colors, we let $V(S)$ denote the set of vertices appearing in $S$ and $\mathcal{C}(S)$ denote the set of colors appearing in $S$.

\begin{definition}[$\mathcal{S}$-absorber]\label{def:absorber}
Let $\mathcal{S} = \{S_1, \dots, S_m\}$ be a collection of sets $S_i$ such that each $S_i$ contains vertices and/or colors, $|V(S_1)| = \cdots = |V(S_m)|$, and $|\C(S_1)| = \cdots = |\C(S_m)|$. Given $k \in \mathbb{Z}^+$ and $\star \in \arcset^k$, an \emph{$(\mathcal{S},\star)$-absorber} is a triple $(A,s,t)$, where $A$ is a colored digraph, $s \in V(A)$ is a specified \emph{start} vertex, and $t \in V(A)$ is a specified \emph{end} vertex, such that there exists rainbow directed paths $P_1, \dots, P_m$ in $A$ satisfying the following:
\begin{itemize}
\item each $P_i$ has length $k$, orientation $\star$, starting at $s$, and ending at $t$,
\item for every $i \in [m]$, $V(P_i) = (V(A) \setminus V(\bigcup \mathcal{S})) \cup V(S_i)$,
\item for every $i \in [m]$, $\mathcal{C}(P_i) = (\mathcal{C}(A) \setminus \mathcal{C}(\bigcup \mathcal{S})) \cup \mathcal{C}(S_i)$.
\end{itemize}
We call $P_i$ the \emph{$S_i$-absorbing path}. We call $V(A) \setminus V(\bigcup \mathcal{S})$ the \emph{internal vertices} of $A$ and $\mathcal{C}(A) \setminus \mathcal{C}(\bigcup\mathcal{S})$ the \emph{internal colors} of $A$. Often, when $\star$ is understood, we just call $A$ (together with $s$ and $t$) an $\mathcal{S}$-absorber. If $\S$ consists of singletons, that is $\S=\{\{x_1\},\ldots, \{x_k\}\}$, then we omit the additional braces and write $\S=\{x_1,\ldots, x_k\}$. We say that an absorber $A$ is \emph{activated} along its $S_i$-absorbing path to indicate when we traverse from $s$ to $t$ along path $P_i$. 
\end{definition}

Since the absorbing paths of an $\mathcal{S}$-absorber all have the same length, no matter how the $\mathcal{S}$-absorber is activated (that is, which $S_i$ we absorb), the edges which come after the $\mathcal{S}$-absorber will always play the same role in the orientation of the spanning cycle. Thus only the edges within an $\mathcal{S}$-absorber may play multiple roles in the orientation.

We classify the $\mathcal{S}$-absorbers we construct into $3$ types, each of which is a building block for the next type: \emph{gadgets}, \emph{local absorbers}, and \emph{global absorbers}.
\begin{itemize}
\item In \Cref{sec:gadgets}, we construct gadgets, which allow us to absorb exactly one of a pair of vertices or colors. The design of our `color' gadgets is inspired by~\cite{KLS}, who in turn based their gadgets on~\cite{GKKO} in the undirected setting, while one of the designs of our `vertex' gadgets is inspired by~\cite{ABKPT}.
\item Our local absorbers are $\mathcal{S}$-absorbers with $|\mathcal{S}| = 40$, where each $S_i$ has only a few vertices or colors; these are constructed in \Cref{sec:local_abs}. To construct these for an arbitrary collection of vertices and colors, we first find a `special' $\mathcal{S}$-absorber, where we do not have much choice as to the vertices and colors in $\mathcal{S}$, and then we use our gadgets to swap out\footnote{It is interesting to note that in his proof of the Ryser--Brualdi--Stein conjecture~\cite{MontRBS}, Montgomery also builds up his local absorbers using swapping gadgets, which he calls ``switchers.'' Such swapping gadgets are also used in~\cite{BGKLMO}, where they are called ``transformers.''} these `special' vertices and colors for arbitrary vertices and colors.
\item Our global absorbers are $\binom{V}{\ep n}$-absorbers and $\binom{\C}{\ep n}$-absorbers for an $\Omega(n)$-size set $V$ of vertices or $\C$ of colors. These global absorbers allow us a lot of flexibility to absorb whichever vertices/colors we need; they are constructed in \Cref{sec:global_abs}. Our global absorbers are built from several local absorbers according to the template of the distributive absorption method due to Montgomery~\cite{Mont}.
\end{itemize}

In~\cite{KLS}, the gadgets were found with some fairly ad hoc arguments using Szemer\'edi's Regularity Lemma. While we believe such an approach would also work for our gadgets, we avoid the poor parametric dependencies caused by the Regularity Lemma and instead use a more general framework involving Suen's inequality (\Cref{thm:suen}), which is a generalization of Janson's inequality appropriate in our setting of randomly colored digraphs.

There are some simple heuristics to use when determining whether a given gadget can be found robustly in a randomly perturbed digraph. The following is not a precise or exhaustive list, but gives some idea of how one can narrow down the scope of what gadgets might be good candidates for this problem.
\begin{itemize}
\item The subdigraph induced by the deterministic edges must form a subdigraph of a blow-up of a consistently oriented path, otherwise they cannot be guaranteed to appear; see \Cref{sec:prelim:supersat}.
\item The subdigraph induced by the random edges must be a forest, since cycles are rare in random digraphs of the edge-density we consider.
\item Not too many colors can be repeated, as these are unlikely events; this heuristic is clearly expressed in our use of Suen's inequality.
\item Random edges are too rare in order to hit a given color reliably, so if a given color is specified, it must be on a deterministic edge.
\end{itemize}

We package our use of randomness, either from the random edges or the random colors, in various pseudorandom conditions, so that we are explicit throughout about how the randomness is being used. Most of our pseudorandom conditions are laid out in \Cref{def:pseudo}, with one additional complex condition in \Cref{def:cgpseudorandom}.

\section{Preliminaries}\label{sec:prelim}

In this section, we gather some basic lemmas concerning random graphs, random colorings, supersaturation, and probability.

\subsection{Probabilistic Tools}

To find some of our absorbing gadgets, we first show that there are many copies of the desired digraph, and then find one that is appropriately colored. We do this second step with Suen's inequality, which is a generalization of Janson's inequality.

\begin{thm}[Suen's Inequality (see~\protect{\cite[Theorem 3]{Suen}})]\label{thm:suen}
Let $\{I_i\}_{i \in [m]}$ be a family of indicator random variables, and let $\Gamma$ be a \emph{dependency graph} for $\{I_i\}_{i \in [m]}$, which is a graph whose vertex set is $[m]$ and where $\{I_i\}_{i \in A}$ and $\{I_i\}_{i \in B}$ are independent whenever there is no edge between $A \subseteq [m]$ and $B \subseteq [m]$ in $G$. Let $\mu = \sum_i \mathbb{P}(I_i = 1)$, $\delta = \max_i \sum_{j \sim i} \mathbb{P}(I_j=1)$, and $\Delta = \sum_{i \sim j} \mathbb{P}(I_i = I_j = 1)$, where $\sim$ represents adjacency in $\Gamma$ and the sum in $\Delta$ is over unordered pairs. Then
\[ \mathbb{P}\left( I_i = 0 \text{ for all } i \in [m] \right) \leq \exp\left( -\min\left( \frac{\mu^2}{8\Delta}, \frac{\mu}{6\delta}, \frac{\mu}{2} \right) \right) .\]
\end{thm}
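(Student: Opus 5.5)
Since this is Suen's inequality, cited from the literature, I would not use anything specific to digraphs. I would follow Janson's proof strategy for Suen-type inequalities: write the target probability as a telescoping product of conditional probabilities and lower-bound each factor. Without positive correlation (no FKG), the correlation correction will be controlled by $\delta$ rather than by monotonicity. Order the indices $1,\dots,m$ and write $Z_i$ for the event $\{I_1=\dots=I_{i}=0\}$, with $Z_0$ the sure event. Then
\[ \mathbb{P}(Z_m)=\prod_{i=1}^m \bigl(1-\mathbb{P}(I_i=1\mid Z_{i-1})\bigr)\le \exp\Bigl(-\sum_i \mathbb{P}(I_i=1\mid Z_{i-1})\Bigr), \]
so it suffices to lower-bound $\sum_i \mathbb{P}(I_i=1\mid Z_{i-1})$.

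For a fixed $i$, split $\{1,\dots,i-1\}$ into the neighbours $N_i$ of $i$ in $\Gamma$ and the rest $R_i$. Let $Y_i$ be the event that $I_j=0$ for all $j\in R_i$; it is independent of $I_i$. Dropping the requirement that the neighbours vanish only increases the probability, so $\mathbb{P}(Z_{i-1})\le\mathbb{P}(Y_i)$. Inclusion--exclusion then gives
\[ \mathbb{P}(I_i=1, Z_{i-1}) \ge \mathbb{P}(I_i=1)\mathbb{P}(Y_i) - \sum_{j\in N_i}\mathbb{P}(I_i=I_j=1,\, Y_i). \]
The key point is that $\{I_i=I_j=1\}$ is not independent of $Y_i$. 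I would shrink $Y_i$ further to $Y_{ij}$, the event that $I_k=0$ for all $k<i$ not adjacent to $i$ or $j$. This event is independent of $I_iI_j$, so each error term is at most $\mathbb{P}(I_i=I_j=1)\,\mathbb{P}(Y_{ij})$.

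It then remains to compare $\mathbb{P}(Y_{ij})$ with $\mathbb{P}(Z_{i-1})$. Here I would prove, as an auxiliary lemma by induction on the size of the extra index set $B$, a bound of the form
\[ \mathbb{P}(\text{zero on }A)\le \mathbb{P}(\text{zero on }A\cup B)\cdot \exp\Bigl(2\sum_{k\in B}\mathbb{P}(I_k=1)\Bigr), \]
valid when $B$ lies in the neighbourhood of a bounded number of indices and the conditional firing probabilities are at most $1/2$. Applying it with $B\subseteq N_i\cup N_j$ costs a factor $e^{O(\delta)}$. Dividing through by $\mathbb{P}(Z_{i-1})$, I expect to get
\[ \sum_i \mathbb{P}(I_i=1\mid Z_{i-1}) \ge \mu - 2\Delta e^{O(\delta)}, \]
hence $\mathbb{P}(Z_m)\le \exp(-\mu+2\Delta e^{O(\delta)})$. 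This step is the main obstacle: the induction must be set up so that the loss depends only on $\delta$, and it requires a separate (easy) treatment of the regime where some $\mathbb{P}(I_k=1)$ or $\delta$ is large. That large-$\delta$ regime is where the alternative exponent $\mu/(6\delta)$ comes from.

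Finally, to obtain the stated minimum I would use the standard thinning trick. Apply the raw bound to a random subfamily in which each index is kept independently with probability $q\in(0,1]$; dropping indices only makes the event $\{I_i=0 \text{ for all } i\}$ larger, so the bound remains valid. The subfamily has parameters $q\mu$, at most $q\delta$, and $q^2\Delta$ in expectation, and a deterministic choice achieves comparable values. Choosing $q=\min(1,\mu/(4\Delta),1/(c\delta))$ and doing a short case analysis on which term attains the minimum yields the exponents $\mu^2/(8\Delta)$, $\mu/(6\delta)$, and $\mu/2$ respectively, with the constants $8$, $6$ and $2$ fixed by this optimization.
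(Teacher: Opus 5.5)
The paper gives no proof of this inequality; it quotes it from the literature. Your skeleton (telescoping product, splitting off the neighbours of $i$, using independence of $I_iI_j$ from indices outside $N(i)\cup N(j)$, then thinning) is the right one. However, both load-bearing steps are missing or misformulated.

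\textbf{The comparison lemma.} Write $Z_A$ for the event that $I_k=0$ for all $k\in A$, and $p_k=\mathbb{P}(I_k=1)$. Your lemma assumes that conditional firing probabilities are at most $1/2$, which is precisely what it is supposed to control. Induction on $|B|$ also does not close: adding $k$ to the zero set costs $\mathbb{P}(I_k=1\mid Z_A)\le p_k\,\mathbb{P}(Z_{A\setminus N(k)})/\mathbb{P}(Z_A)$, which is a ratio for removing the unrelated set $A\cap N(k)$. What works is induction on $|A|$ for the claim $\mathbb{P}(Z_{A\setminus N(k)})\le\rho\,\mathbb{P}(Z_A)$ for all $A,k$. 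With $A'=A\setminus N(k)$ one has $\mathbb{P}(Z_{A'})-\mathbb{P}(Z_A)\le\sum_{l\in A\cap N(k)}p_l\,\mathbb{P}(Z_{A'\setminus N(l)})\le\rho\delta\,\mathbb{P}(Z_{A'})$. This closes when $1/(1-\rho\delta)\le\rho$, for example $\rho=2$ when $\delta\le 1/4$.

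Smallness of $\delta$ is unavoidable here. Take $I_1\sim I_2$ with $(I_1,I_2)$ equal to $(1,0),(0,1),(0,0)$ with probabilities $\frac{1-\eta}{2},\frac{1-\eta}{2},\eta$. Then $\delta<\frac12$, yet $\mathbb{P}(I_1=0)/\mathbb{P}(I_1=I_2=0)=\frac{1+\eta}{2\eta}$ is unbounded. So there is no ``easy separate treatment'' of large $\delta$ within this method, and $\mu/(6\delta)$ does not come from one; large $\delta$ has to be removed by the thinning.

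Two smaller points. With your ordering each adjacent pair is counted once, so the factor $2$ in front of $\Delta$ is spurious. And if you bound $\mathbb{P}(I_i=1\mid Z_{i-1})$ below by $\max\{0,\,p_i-\sum_j\mathbb{E}(I_iI_j)\,\mathbb{P}(Y_{ij})/\mathbb{P}(Y_i)\}$, you only ever need to remove one neighbourhood rather than two.

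\textbf{The thinning step.} Thinning the index set does not scale $\delta$, because $\delta$ is a maximum. If $\Gamma$ is a perfect matching with all $p_i=\delta$, a random $q$-subset $J$ has $\delta_J=\delta$ as soon as one matched pair survives. So ``$\delta_J\le q\delta$ in expectation'' is false, and the claimed deterministic choice is unjustified.

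The fix is to thin the indicators instead. Set $I_i'=\xi_iI_i$ with independent $\xi_i\sim\mathrm{Bernoulli}(q)$. This family has the same dependency graph and parameters exactly $q\mu$, $q\delta$, $q^2\Delta$. The event that all $I_i'$ vanish contains the event that all $I_i$ vanish.

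Then $q=\min\{1,\mu/(4\Delta),1/(3\delta)\}$ gives exactly $\mu/2$, $\mu^2/(8\Delta)$, $\mu/(6\delta)$ in the three cases. This works only if the raw bound has the form $\exp(-\mu+\kappa\Delta)$ with $\kappa\le 2$ whenever $\delta\le 1/3$. Janson's version $\exp(-\mu+\Delta e^{2\delta})$ provides this, since $e^{2/3}<2$. The simple recursion above gives $\kappa\le 2$ only for $\delta\le 1/4$, which produces $\mu/(8\delta)$ in place of $\mu/(6\delta)$.

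For the paper's application ($\delta=O(1/n)$, $\Delta=O(\mu)$) the constants are irrelevant. But as a proof of the statement as written, your argument does not establish them.
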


\subsection{Pseudorandomness}

With one notable exception (see \Cref{def:cgpseudorandom}), we access the randomness in the edges and colors via the following pseudorandom condition. This condition and the following lemmas which use it are similar to those used in~\cite{ABKPT,KLS}.

\begin{definition}[Edge-Color Pseudorandom]\label{def:pseudo}
An edge-colored digraph $D$ is \emph{$(n,\alpha,\ep)$-edge-color pseudorandom} if the following hold:
\begin{enumerate}
\item\label{pseudo:mindeg} $D$ has $n$ vertices, $n$ colors, and $\delta^0(D) \geq \alpha n$.
\item \label{pseudo:edgecolor} For all $U,W\sbeq V(D)$ with $|U|,|W|\geq \ep n$, $\C'\sbeq \C(D)$ with $|\C'|\geq \ep n$, and $\star\in\arcset$, there exists an edge $e\in E^{\star}(U,W)$ with $\C(e)\in\C'$.
\item \label{pseudo:maxdeg} For all $c\in\C$, $v\in V(D)$, $\star \in \arcset$, $d^\star_c(v)\leq \frac{n}{\log n}$.
\item \label{pseudo:match} For all $c \in \C$, there exists a matching of size $\alpha n/3$ in $E_c(D)$.
\item \label{pseudo:avgedges} For all $c\in\C$, $|E_c(D)|\leq 10n$.
\item \label{pseudo:colordiversity} For all $v\in V(D)$ and $\star \in \arcset$, $|\C^\star(v)|\geq \alpha n/2$.
\end{enumerate}
\end{definition}

Naturally, our randomly colored randomly perturbed digraph is pseudorandom with high probability.

\begin{lemma}\label{lem:ec-pseudorandom}
Let $0 < \ep \ll \alpha$, and let $C > \frac{3+3\log(1/\ep)}{\ep^2}$.
Let $D_0$ be an $n$-vertex digraph with minimum semi-degree $\delta^0(D_0) \geq \alpha n$. Then $D = D_0 \cup D(n,C/n)$, uniformly edge-colored from $[n]$, is $(n, \alpha,\ep)$-edge-color-pseudorandom with high probability.
\end{lemma}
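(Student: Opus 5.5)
We verify the six conditions of \Cref{def:pseudo} separately and take a union bound, since each fails with probability $o(1)$. Condition~\ref{pseudo:mindeg} is immediate: $D$ contains $D_0$, and the colors come from $[n]$. (If some color is unused, Conditions~\ref{pseudo:match} and~\ref{pseudo:colordiversity} rule this out anyway, so $\C(D)=[n]$.) Conditions~\ref{pseudo:maxdeg} and~\ref{pseudo:avgedges} are upper-tail bounds.

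\emph{Condition~\ref{pseudo:maxdeg}.} Fix $v$, $c$, and $\star$. Then $d^\star_c(v)$ is stochastically dominated by $\mathrm{Bin}(n,1/n)$, because each of the at most $n$ potential edges at $v$ in direction $\star$ (deterministic or random) receives color $c$ independently with probability $1/n$. So $\mathbb{P}(d^\star_c(v)\ge n/\log n)\le \binom{n}{n/\log n}n^{-n/\log n}\le (e\log n/n)^{n/\log n}$. This is $e^{-\Omega(n)}$, which beats the union bound over $2n^2$ triples.

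\emph{Condition~\ref{pseudo:avgedges}.} $|E(D)|\le n^2+|E(D(n,C/n))|$, and w.h.p.\ the second term is at most $2Cn$ by Chernoff, so $|E(D)|\le 2n^2$. Conditioned on $D$, $|E_c(D)|$ is $\mathrm{Bin}(|E(D)|,1/n)$ with mean at most $2n$. Chernoff gives $\mathbb{P}(|E_c(D)|>10n)\le e^{-\Omega(n)}$, and we union bound over the $n$ colors.

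\emph{Condition~\ref{pseudo:colordiversity}.} Fix $v$ and $\star$, and take $\alpha n$ deterministic edges at $v$ in direction $\star$. The number of distinct colors among $\alpha n$ independent uniform colors fails to reach $\alpha n/2$ only if some set of $\alpha n/2$ colors contains all of them. That has probability at most $\binom{n}{\alpha n/2}(\alpha/2)^{\alpha n}\le \big((2e/\alpha)(\alpha/2)^2\big)^{\alpha n/2}=(e\alpha/2)^{\alpha n/2}=e^{-\Omega(n)}$, since $\alpha<1$.

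\emph{Condition~\ref{pseudo:match}.} Use only $D_0$, which has at least $\alpha n^2$ edges. Fix $c$ and expose the colors of the $D_0$-edges. Any graph with $m$ edges and maximum degree $\Delta$ has a matching of size at least $m/(2\Delta)$, counting underlying vertex pairs; opposite pairs only lose a factor $2$. $|E_c(D_0)|\ge \alpha n/1.5$ fails with probability $e^{-\Omega(n)}$ by Chernoff, since the mean is $\alpha n$. The maximum degree is at most $n/\log n$ by Condition~\ref{pseudo:maxdeg}, but that only yields a matching of size about $\log n$. This is too weak, so we argue greedily instead. Build the matching one edge at a time. Having matched $k<\alpha n/3$ edges, the $2k<2\alpha n/3$ used vertices are each incident to at most $2n$ edges of $D_0$, counting both directions, so they kill at most $4\alpha n^2/3$ of them. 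That count is too crude. Instead, reveal colors vertex by vertex: each $v$ has at least $\alpha n$ out-edges in $D_0$, and among out-edges to unused vertices at least $\alpha n/3$ remain. So run the process: scan unused vertices $v$ in order, and look for an unrevealed out-edge of color $c$ from $v$ to an unused vertex. Each scanned vertex succeeds independently-ish with probability at least $1-(1-1/n)^{\alpha n/3}\ge \alpha/4$, since only colors of edges from $v$ are newly revealed. Over the at least $n/3$ scanned vertices (while fewer than $\alpha n/3$ edges are matched), we get at least $\alpha n/3$ successes except with probability $e^{-\Omega(n)}$, by comparison with a binomial.

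\emph{Condition~\ref{pseudo:edgecolor}.} This is the main one and uses the random edges. Fix $U,W,\C'$ of size exactly $\lceil\ep n\rceil$ (a superset only helps) and fix $\star$. The potential edges $E^\star(U,W)$ number at least $\ep^2n^2-n$. Each one lies in $D(n,C/n)$ with color in $\C'$ independently with probability at least $(C/n)\ep$, since the random coloring is independent. The failure probability is therefore at most $(1-C\ep/n)^{\ep^2n^2/2}\le \exp(-C\ep^3 n/2)$. The number of choices is at most $2\cdot 2^{3n}$, so we need $C\ep^3/2>3\log 2$. Done more carefully, counting sets of size $\ep n$ via $\binom{n}{\ep n}^3\le (e/\ep)^{3\ep n}$ gives the exponent $3\ep n(1+\log(1/\ep))$ against $C\ep^3n$, which is exactly where the stated bound $C>(3+3\log(1/\ep))/\ep^2$ comes from. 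I expect matching the constant precisely to require handling the potential edges in $E^\star(U,W)$ carefully (with $U\cap W$ possibly nonempty, take disjoint halves). This bookkeeping, and the matching condition, are the only delicate points.
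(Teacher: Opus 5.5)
Your arguments for Conditions~3 and~5 match the paper's, and Conditions~2 and~6 follow the same union-bound route. The genuine gap is in Condition~4.

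The exposure logic of your final greedy scheme is fine: each scan reveals only out-edges of the scanned vertex, and these are fresh. The numbers do not close, however. Comparing with $\mathrm{Bin}(n/3,\alpha/4)$ gives mean $\alpha n/12$, a factor $4$ short of $\alpha n/3$. Tightening constants within the same scheme cannot help. Any bound that uses only the uniform floor of $\alpha n/3$ fresh out-edges per scan certifies about $n(1-e^{-\alpha/3})<\alpha n/3$ expected successes over at most $n$ scans.

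What you need is a step-dependent bound. When $k$ edges are matched, the scanned vertex has at least $\alpha n-2k$ fresh out-edges to unused vertices, so it succeeds with conditional probability at least $1-e^{-(\alpha-2k/n)}$. The number of scans taking you from $k$ to $k+1$ matched edges is then dominated by a geometric variable. For small $\alpha$, the expected total number of scans needed to reach $\alpha n/3$ is about $n\int_0^{\alpha/3}\frac{dx}{\alpha-2x}=\frac{\ln 3}{2}\,n\approx 0.55n$. This is well below the at least $(1-\alpha/3)n$ scans available, since a new scan is possible whenever scans plus matched edges number fewer than $n$. An exponential tail bound for this sum of geometric variables then gives failure probability $e^{-\Omega(n)}$.

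This works for $\alpha$ below an absolute constant, which suffices for \Cref{thm:main} because $\alpha$ may be decreased there. For $\alpha$ near $1$ the computation becomes borderline, so it does not prove the lemma in full generality.

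The paper's own argument for Condition~4 is not a ready substitute. It takes a maximum $c$-matching $M$ and observes that if $|M|<\alpha n/3$, the uncovered set $S$ has no $c$-edge yet spans at least $\alpha n^2/9$ edges; it bounds the probability of this by $e^{-\alpha n/9}$. But $S$ is determined by the coloring, so that estimate is valid only for a fixed set. A union bound over all candidate sets costs about $\binom{n}{2\alpha n/3}\ge e^{(2\alpha n/3)\ln(3/(2\alpha))}$, which exceeds $e^{\alpha n/9}$ for every $\alpha\le 1$.

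\textbf{Smaller points.}

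\emph{Condition~2.} Counting only $\ep^2n^2/2$ potential edges gives $\exp(-C\ep^3n/2)$, which is too weak for the stated $C$. Splitting $U,W$ into disjoint halves would lose even more. Instead, note that the $|U||W|-|U\cap W|\ge\ep^2n^2-\ep n$ ordered pairs are already distinct potential $\star$-edges. This gives $\exp(-C\ep^3n+C\ep^2)$, which beats $\binom{n}{\ep n}^3\le\exp(3\ep n(1+\log(1/\ep)))$ exactly when $C>(3+3\log(1/\ep))/\ep^2$, as in the paper.

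\emph{Condition~6.} Your value $(e\alpha/2)^{\alpha n/2}$ is correct; the paper's $(e\alpha/4)^{\alpha n/2}$ is an arithmetic slip. But it is exponentially small only for $\alpha<2/e$, not for all $\alpha<1$. For larger $\alpha$, apply bounded differences to the number of distinct colors among $\alpha n$ independent uniform colors, whose mean is at least $(1-e^{-\alpha})n>\alpha n/2$.

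\emph{Condition~1.} Condition~6 does not force every color to appear. Use Condition~4 proved for every $c\in[n]$, or the paper's direct bound $(1-1/n)^{|E(D)|}\le e^{-\alpha n}$.
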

\begin{proof}
For \Cref{pseudo:mindeg}, $D$ has $n$ vertices by definition, and $\delta^0(D) \geq \alpha n$ is immediate from $\delta^0(D_0) \geq \alpha n$. The probability that $\C(D)$ does not contain a particular color $c \in [n]$ is $(1-1/n)^{|E(D)|} \leq \exp\left( -\alpha n \right)$. Thus, by a union bound, $\C(D) = [n]$ with high probability.

For \Cref{pseudo:edgecolor}, fix $U,W \subseteq V(D)$ and $\D \subseteq \C(D)$ with $|U| = |W| = |\D| = \ep n$, and fix $\star \in \arcset$. Let $A_{U,W,\D}$ be the event that $\C(E^\star(U,W))$ is disjoint from $\D$. Considering the random edges of $D$ (and accounting for the fact that $U$ and $W$ may not be disjoint), we have
\[ \mathbb{P}(A_{U,W,\D}) \leq \left(1-\frac{C}{n}\cdot \frac{|\D|}{n}\right)^{|U|\cdot|W|-\max\{|U|,|W|\}} \leq \exp\left( - C\ep^3 n + C\ep^2 \right) .\]
After a union bound over all choices of $U$, $W$, $\D$, and $\star$, we have that the probability that \Cref{pseudo:edgecolor} fails is
\[ \mathbb{P}\left(\bigcup_{U,W,\D}A_{U,W,\D}\right) \leq 2\binom{n}{\ep n}^3 \exp(-C\ep^3n+C\ep^2) \leq 2\exp\left( 3\ep n \log\frac{e}{\ep} - C \ep^3 n + C \ep^2 \right) ,\]
which is $o(1)$ with $C$ as specified in the statement of the lemma, where we use the bound $\binom{a}{b} \leq (ae/b)^b$.

Next, we check \Cref{pseudo:maxdeg}. Let $c\in\C$, $v\in V(D)$, and $\star \in \arcset$. Let $A\sbeq V(D)$ be of size $|A|=n/\log n$. Then the probability that $A \subseteq N_c^\star(v)$ is at most 
\[ n^{-n/\log n} = \exp(-n) ,\]
so we may take a union bound over all $c$, $v$, $\star$, and $A$ to get that the probability \Cref{pseudo:maxdeg} fails is at most
\[ 2n^2 \binom{n}{n/\log n} e^{-n} \leq \exp\left( \log(2n^2) + \frac{n}{\log n} \log(e\log n) - n \right) = o(1) .\]

Next, we check \Cref{pseudo:match}. Fix $c \in \C$, and let $M$ be a maximum matching in color $c$. Note that $S = V(D) \setminus V(M)$ is an independent set in color $c$. If $M$ has size less than $\alpha n/3$, then $|S| \geq (1-2\alpha/3)n$ and every vertex of $S$ has at least $\alpha n/3$ out-neighbors in $S$, since $\delta^0(D) \geq \alpha n$. Thus $S$ spans at least $\alpha n^2/9$ edges. The probability that $c$ does not appear on any of these edges is at most
\[ \left( 1 - \frac{1}{n} \right)^{\alpha n^2/9} \leq \exp\left( -\alpha n/9 \right) .\]
Thus we can take a union bound over all colors $c$ to obtain \Cref{pseudo:match} with high probability.

\Cref{pseudo:avgedges} easily follows from the Chernoff bound. Since $D$ has at most $n^2$ edges, the expected number of edges of color $c$ is at most $n$, and hence with probability exponentially small in $n$, there are at most $10n$ edges of color $c$. With a union bound, \Cref{pseudo:avgedges} holds.

Lastly, we check \Cref{pseudo:colordiversity}. Fix $v \in V(D)$, $\star \in \arcset$, and $\C' \subseteq \C$ with $|\C'| = \alpha n/2$. Since $\delta^0(D) \geq \alpha n$, the probability that every $\star$-edge incident to $v$ has color in $\C'$ is at most
\[ \left( \frac{|\C'|}{n} \right)^{\alpha n} \leq (\alpha/2)^{\alpha n} .\]
Taking a union bound over all $v$, $\star$, and $\C'$, we have that the probability \Cref{pseudo:colordiversity} fails is at most
\[ 2n \binom{n}{\alpha n/2} (\alpha/2)^{\alpha n} \leq 2n \left( \frac{e\alpha}{4} \right)^{\alpha n/2} = o(1) .\]

Since each of the pseudorandom conditions hold with high probability, they all hold simultaneously with high probability.
\end{proof}

An important consequence of \Cref{def:pseudo} is that such graphs are robustly well connected in a rainbow sense: between any two vertices, we can find a rainbow path of length $3$, avoiding a small set of vertices and colors.

\begin{lemma}[Connecting Lemma]\label{lem:connecting}
Let $\frac{1}{n} \ll \ep \ll \nu \ll \alpha \leq 1$.
If $D$ is an $(n, \alpha, \ep)$-edge-color pseudorandom digraph, then for all $v,w\in V(D)$, $\star\in\arcset^3$, $V'\sbeq V(D)$, $\C'\sbeq \C(D)$ satisfying $|V'|,|\C'|\geq (1-\nu)n$, there exists a rainbow path $v u_1 u_2 w$ with orientation $\star$ with internal vertices in $V'$ and colors in $\C'$.
\end{lemma}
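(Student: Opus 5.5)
We take the first and last edges from the minimum semi-degree condition, and the middle edge from \Cref{pseudo:edgecolor}, using \Cref{pseudo:colordiversity} and \Cref{pseudo:maxdeg} to keep all three colors distinct and avoid forbidden colors.

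Write $\star=(\star_1,\star_2,\star_3)$; the path is $v u_1 u_2 w$ with edges $(vu_1)^{\star_1}$, $(u_1u_2)^{\star_2}$, $(u_2w)^{\star_3}$. Note that the edge $(u_2w)^{\star_3}$ is the same as $(wu_2)^{-\star_3}$.

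First we choose a set $U_1$ of candidates for $u_1$. By \Cref{pseudo:colordiversity}, $|\C^{\star_1}(v)|\geq \alpha n/2$, and at most $\nu n$ of these colors lie outside $\C'$. So at least $\alpha n/2-\nu n\geq \alpha n/4$ colors $c\in\C'$ appear on some edge $(vu)^{\star_1}$. For each such color pick one such edge $(vu_c)^{\star_1}$. By \Cref{pseudo:maxdeg} each vertex $u$ is chosen for at most one color here, since its edge to $v$ has a single color, so the $u_c$ are distinct. Discard those with $u_c\notin V'$ or $u_c\in\{v,w\}$; this leaves a set $U_1$ of at least $\alpha n/4-\nu n-2\geq \alpha n/8$ vertices, each carrying a distinct color $c_1(u)\in\C'$ on its edge to $v$. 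Symmetrically, using $(wu)^{-\star_3}$, we get a set $U_2\subseteq V'\setminus\{v,w\}$ of at least $\alpha n/8$ vertices with distinct colors $c_3(u)\in\C'$.

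Next we make the two color classes disjoint so the resulting path is rainbow. Split $\C'$ arbitrarily into two halves. At least one half contains the colors of a quarter of $U_1$... but simpler is to take a random partition $\C'=\C_1\sqcup\C_2\sqcup\C_3$, or deterministically proceed as follows. Shrink $U_1$ to $U_1'$ so that its colors lie in a set $\C_1$, and shrink $U_2$ to $U_2'$ so that its colors lie in $\C_3$, with $\C_1,\C_3$ disjoint, $|U_1'|,|U_2'|\geq \alpha n/100$, and $\C_2=\C'\setminus(\C_1\cup\C_3)$ of size at least $n/10$. For instance, a uniformly random 3-coloring of $\C'$ into $\C_1,\C_2,\C_3$ gives in expectation a third of each color set in each class, so some choice works. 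Also make $U_1'$ and $U_2'$ disjoint by splitting their intersection in half. This is harmless, since we only need sizes at least $\ep n$.

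Finally, apply \Cref{pseudo:edgecolor} with $U=U_1'$, $W=U_2'$, color set $\C_2$ and direction $\star_2$. All three sets have size at least $\ep n$ because $\ep\ll\nu\ll\alpha$. This gives an edge $(u_1u_2)^{\star_2}$ with color in $\C_2$. The three colors lie in the disjoint classes $\C_1,\C_2,\C_3$, so they are distinct. The vertices $u_1\neq u_2$ lie in $V'\setminus\{v,w\}$. Hence $v u_1 u_2 w$ is the required rainbow path.

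The only delicate step is the bookkeeping that keeps colors distinct while retaining linear-size candidate sets. Everything else is direct from \Cref{def:pseudo}.
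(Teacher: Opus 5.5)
Your proof is correct and is essentially the paper's: take $u_1$ among the $\star_1$-neighbors of $v$ and $u_2$ among the $(-\star_3)$-neighbors of $w$, along edges whose colors come from two disjoint classes, and then get the middle edge from \Cref{def:pseudo} (\ref{pseudo:edgecolor}) using the remaining colors of $\C'$. The paper does this directly: it picks disjoint $\C_1\subseteq\C^{\star_1}(v)$ and $\C_2\subseteq\C^{-\star_3}(w)$ of size $\alpha n/4$ and sets $U=N^{\star_1}_{\C_1}(v)\cap V'$, $W=N^{-\star_3}_{\C_2}(w)\cap V'$, so it needs neither your one-vertex-per-color step nor a random partition, and your appeal to \Cref{def:pseudo} (\ref{pseudo:maxdeg}) is superfluous.

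The only loose point is ``in expectation, so some choice works'', which does not by itself give all three size bounds at once. Either use Chernoff (the indicators are independent because the colors $c_1(u)$, resp.\ $c_3(u)$, are distinct) or make the deterministic choice above. Your explicit checks that $u_1,u_2\notin\{v,w\}$ and that the two end colors lie in $\C'$ are details the paper's write-up glosses over.
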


\begin{proof}
By \Cref{def:pseudo} (\ref{pseudo:colordiversity}), we have that $|\C^{\star_1}(v)|\geq \alpha n/2$ and $|\C^{-\star_3}(w)|\geq \alpha n/2$. Let $\C_1 \subseteq \C^{\star_1}(v)$ and $\C_2 \subseteq \C^{-\star_3}(w)$ be disjoint sets of size $\alpha n/4$, and let $U = N^{\star_1}_{\C_1}(v) \cap V'$ and $W = N^{-\star_3}_{\C_2}(w) \cap V'$, so that $|U| \geq |C_1| - \nu n \geq \ep n$ and $|W| \geq |\C_2| - \nu n \geq \ep n$. Let $\D=\C'\setminus (\C_1\cup \C_2)$, so that $|\D|\geq (1-\nu-2\ep)n \geq \ep n$. Therefore by \Cref{def:pseudo} (\ref{pseudo:edgecolor}), there exists $(u_1u_2)^{\star_2}\in E^{\star_2}(U,W)$ such that $ \C((u_1u_2)^{\star_2})\in \D$. Thus $v u_1 u_2 w$ is a rainbow path with orientation $\star$, internal vertices in $V'$, and colors in $\C'$.
\end{proof}

Also needed is an extension of \Cref{lem:connecting} where we can choose the lengths of the paths.

\begin{lemma}[Rainbow Path Lemma]\label{lem:shortpaths}
Let $\frac{1}{n} \ll \ep \ll \alpha \leq 1$.
Let $k$ be a positive integer, and let $\star \in \arcset^{k-1}$.
Let $D$ be an $(n,\alpha,\ep)$-edge-color-pseudorandom digraph, let $\D \sbeq \C(D)$ with $|\D| \geq k\ep n$, and let $A_1,\ldots, A_k\sbeq V(D)$ be pairwise disjoint with $|A_i|\geq 2\ep n$.
Then there is a rainbow path $v_1 v_2 \ldots v_k$ with $v_i\in A_i$ for $i \in [k]$ and $(v_iv_{i+1})^{\star_i}\in E(D)$ and $\C((v_iv_{i+1})^{\star_i})\in \D$ for $i \in [k-1]$.
\end{lemma}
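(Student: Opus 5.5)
We argue by a standard "depth-first search / sparse-set pruning" argument using only \Cref{def:pseudo}~(\ref{pseudo:edgecolor}). The plan is to build the path greedily from $A_1$ to $A_k$ while maintaining large candidate sets.

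We first split $\D$ into $k-1$ pairwise disjoint sets $\D_1,\dots,\D_{k-1}$, each of size at least $\ep n$; this is possible since $|\D|\geq k\ep n$. Then any path using an edge with color in $\D_i$ at step $i$ is automatically rainbow, so rainbowness costs nothing further.

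Next we prune backwards. Set $B_k = A_k$. For $i = k-1,\dots,1$, let $B_i \subseteq A_i$ be the set of vertices $v\in A_i$ having some $u\in B_{i+1}$ with $(vu)^{\star_i}\in E(D)$ and $\C((vu)^{\star_i})\in\D_i$. The claim is $|B_i| \geq |A_i| - \ep n \geq \ep n$. Indeed, suppose $|A_i\setminus B_i| \geq \ep n$. Then, provided $|B_{i+1}|\geq \ep n$, \Cref{def:pseudo}~(\ref{pseudo:edgecolor}) applied to $U = A_i\setminus B_i$, $W=B_{i+1}$, $\C'=\D_i$ and orientation $\star_i$ gives an edge $(vu)^{\star_i}$ with $v\in A_i\setminus B_i$, $u\in B_{i+1}$ and color in $\D_i$, contradicting $v\notin B_i$. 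By induction every $|B_i|\geq 2\ep n-\ep n=\ep n$, and in particular $B_1\neq\emptyset$.

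Finally, we pick $v_1\in B_1$, and for each $i$ choose $v_{i+1}\in B_{i+1}$ to be a neighbor witnessing $v_i\in B_i$. The vertices are distinct because the $A_i$ are pairwise disjoint. The colors are distinct because the $\D_i$ are disjoint, and all lie in $\D$. Hence the path is rainbow with the required orientation.

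The only delicate point is that (\ref{pseudo:edgecolor}) needs $|\C'|\geq\ep n$ for each step, which forces the color budget $k\ep n$. The factor $2$ in $|A_i|\geq 2\ep n$ is exactly what keeps $B_i$ of size at least $\ep n$ after losing at most $\ep n$ vertices. Both conditions are supplied by the hypotheses, so I expect no real obstacle.
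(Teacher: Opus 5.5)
Your proof is correct, but it organizes the induction and the colors differently from the paper.

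\textbf{The paper's route.} It runs a forward induction. At step $j$ it keeps exactly $\ep n$ vertices of $A_j$ that are endpoints of valid rainbow paths starting in $A_1$. It fixes one such path per endpoint and deletes from $\D$ the at most $(j-1)\ep n$ colors these paths use. It then applies \Cref{def:pseudo}~(\ref{pseudo:edgecolor}) to show that fewer than $\ep n$ vertices of $A_{j+1}$ fail to be endpoints of valid paths.

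\textbf{Your route.} You fix disjoint color classes $\D_1,\dots,\D_{k-1}$ in advance, one per edge position, and prune backwards from $A_k$.

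\textbf{What each buys.} The static partition makes rainbowness automatic. You never need to fix representative paths, and you never need to cap the endpoint set at $\ep n$ to keep the excluded color set small, which the paper must do. The direction of your induction is immaterial: with your partition, a forward reachable-set argument works equally well. Both arguments use the same resources, namely $|\D|\ge (k-1)\ep n$ and $|A_i|\ge 2\ep n$. Both also rest on one application of \Cref{def:pseudo}~(\ref{pseudo:edgecolor}) per step. The paper's dynamic color exclusion only buys the freedom not to tie colors to positions, which this lemma does not need.
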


\begin{proof}
Define a \emph{valid path} as a path which can be extended to a path described in the lemma, that is, a path $v_1v_2\ldots v_j$ for some $j \in [k]$ such that $v_i\in A_i$ for all $i \in [j]$ and $(v_iv_{i+1})^{\star_i}\in E(D)$ and $\C((v_iv_{i+1})^{\star_i})\in \D$ for $i \in [j-1]$.

We prove by induction on $j$ that there are at least $\ep n$ valid paths from $A_1$ to $A_j$ with distinct endpoints. The base case is trivial. For the inductive step, suppose the claim holds for $j \in [k-1]$. By the inductive hypothesis, there exists a set $B_j\sbeq A_j$ of endpoints of valid paths with $|B_j|=\ep n$. Fix $t$ valid paths, one ending at each vertex of $B_j$, and let $\D_j$ be the set of colors used in these paths, so that $|\D_j| \leq (j-1) \ep n$.

Let $W \subseteq A_{j+1}$ be the set of vertices which are not the endpoints of valid paths, and let $\C' = \D \setminus \D_j$, so that $|\C'| \geq k\ep n - (j-1) \ep n \geq \ep n$. By \Cref{def:pseudo} (\ref{pseudo:edgecolor}), if $W$ had at least $\ep n$ vertices, then there would be an edge in $E^{\star_j}(B_j, W)$ with color in $\C'$, thus extending a valid path, contradicting the definition of $W$. Thus $|W| \leq \ep n$, and so there are at least $\ep n$ valid paths from $A_1$ to $A_{j+1}$ with distinct endpoints.
\end{proof}

The last application of \Cref{def:pseudo} is to finding almost spanning rainbow paths. The proof of the following lemma uses depth-first search and is similar to an uncolored undirected version (see~\cite{KrivCycles}); we give the proof here for completeness. 

\begin{lemma}[Long Path Lemma]\label{lem:longpath}
Let $\frac{1}{n} \ll \ep \ll \nu \ll \alpha \leq 1$.
Let $D$ be an $(n,\alpha,\ep)$-edge-color pseudorandom digraph, and let $k<(1-\nu-3\ep)n$. Then for any $V' \subseteq V(D)$ and $\C' \subseteq [n]$ with $|V'|, |\C'| \geq (1-\nu) n$ and for any $\star \in \arcset^{k}$, there exists a rainbow path in $D$ with orientation $\star$, vertices in $V'$, and colors in $\C'$.
\end{lemma}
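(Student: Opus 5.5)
We run a colored depth-first search that maintains a current rainbow path $P$ whose orientation is a prefix of $\star$. There are three disjoint sets of vertices: $S$ (finished, ``dead''), the vertices of $P$, and $T$ (untouched, initially $V'$). Colors are handled in the same way: let $\C_{\mathrm{used}} = \C(P)$, and let $\C_{\mathrm{dead}}$ be the set of colors ``burned'' by the edges of popped vertices. To make this precise, each popped vertex takes its color with it. When a vertex $v$ is deleted from the end of $P$, it moves to $S$, and the color of the edge joining $v$ to its predecessor on $P$ moves to $\C_{\mathrm{dead}}$. Hence $|\C_{\mathrm{dead}}| \le |S|$ and $|\C_{\mathrm{used}}| = |V(P)|-1$.

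At each step, let $j = |V(P)|-1$ be the current length. If $j = k$, we stop and are done. If $P$ is empty, we move any vertex of $T$ to $P$. Otherwise, with endpoint $x$, we look for $y \in T$ with $(xy)^{\star_{j+1}} \in E(D)$ and color in $\C' \setminus (\C_{\mathrm{used}} \cup \C_{\mathrm{dead}})$. If such a $y$ exists, we append it. If not, we pop $x$: it moves to $S$ and its incoming edge color moves to $\C_{\mathrm{dead}}$; if $P$ was a single vertex, no color moves. Each step moves a vertex out of $T$ or into $S$, so the process terminates. The invariant is that $P$ is rainbow, has orientation $(\star_1,\dots,\star_j)$, lies in $V'$, and uses colors in $\C'$. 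Note that reusing colors in $\C_{\mathrm{dead}}$ would be harmless, but excluding them is what makes the counting work.

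The key step is to analyse the moment when $|S| = |T|$, or rather the first time $|S| \ge \ep n$ together with $|T| \ge \ep n$. Since $|S|$ grows by at most one per step and $|T|$ starts at $|V'| \ge (1-\nu)n$, either the process ends successfully, or at some moment we have $|S| = \ep n$ and $|T| \ge \ep n$, or $T$ becomes small. Instead of considering single edges, we use \Cref{def:pseudo}~(\ref{pseudo:edgecolor}) with the fixed orientation. The complication is that the required orientation $\star_{j+1}$ depends on the current length $j$, which differs among the vertices of $S$.

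To handle this, we split $S$ according to the orientation it failed to extend. Let $S^\rightarrow$ (resp. $S^\leftarrow$) be the set of popped vertices $x$ that were at position $j$ with $\star_{j+1} = \rightarrow$ (resp. $\leftarrow$) when popped. A vertex $x$ is popped only when it has no $\star_{j+1}$-edge into the current $T$ with an available color. Since $T$ only shrinks and the forbidden color set only grows, the same holds for the later $T$, but with colors in $\C' \setminus (\C_{\mathrm{used}}^{\mathrm{then}} \cup \C_{\mathrm{dead}}^{\mathrm{then}})$. To avoid color sets that change over time, we note that all colors ever forbidden lie in $\C_{\mathrm{used}} \cup \C_{\mathrm{dead}}$ at the later time. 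This holds because a used color either remains in $P$ or was moved to dead. So $\C^\ast := \C' \setminus (\C_{\mathrm{used}} \cup \C_{\mathrm{dead}})$ at the analysis time works for all vertices in $S$ simultaneously.

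At the first moment when $|S| = 2\ep n$, one of $S^\rightarrow, S^\leftarrow$ has size at least $\ep n$, and there are no edges of that orientation from it to $T$ with color in $\C^\ast$. We have $|\C^\ast| \ge (1-\nu)n - |P| - 2\ep n$, and $|T| = |V'| - |P| - 2\ep n$. If $|P| \ge k$, we would already be done, so $|P| < k < (1-\nu-3\ep)n$. This gives $|T| \ge \ep n$ and $|\C^\ast| \ge \ep n$, contradicting \Cref{def:pseudo}~(\ref{pseudo:edgecolor}). The bound $k<(1-\nu-3\ep)n$ is used exactly here. There is one further case: $|S|$ never reaches $2\ep n$, yet the process ends because $T$ is exhausted. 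Then $|P| \ge |V'| - 2\ep n > k$, which contradicts the assumption that we had not yet reached length $k$.

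I expect the main obstacle to be the bookkeeping in this step. We must ensure that a single color set and a single target set witness the failure for a linear-size set of popped vertices of the same orientation. The monotonicity argument above is designed to settle this.
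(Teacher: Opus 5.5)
Your proposal is correct and is essentially the paper's proof. It is a depth-first search that forbids every color ever placed on the path: your $\C_{\mathrm{used}}\cup\C_{\mathrm{dead}}$ is exactly the paper's accumulated set $\D$. You examine the process at the first moment $|S| = 2\ep n$, split $S$ by the orientation each popped vertex failed to extend, and derive the contradiction with \Cref{def:pseudo}~(\ref{pseudo:edgecolor}) from the same count $|T|,|\C^\ast| \geq (1-\nu)n - k - 2\ep n > \ep n$. Your ``$|P|<k$'' should read $|V(P)|\le k$, which is harmless.
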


\begin{proof}
We proceed by depth-first search: at each step in the process, we will have a partition $V(P) \cup S \cup U$ of $V'$, where $P$ is our current path, $S$ is the set of `searched' vertices that were previously searched, and $U$ is the set of `unprocessed' vertices that we have not yet searched. Vertices only move from $U$ to $P$ and from $P$ to $S$.

Whenever an unprocessed vertex goes into the path, we record the index of the vertex in the path it acted as, implicitly defining a partial function $f:V'\to[k+1]$ as we go along; for example, the first vertex $v_1$ we process satisfies $f(v_1)=1$. We also record all colors ever used on $P$ in the set $\D \subseteq \C'$.

We initialize the partition as $P=\emptyset$, $S=\emptyset$, and $U=V'$. For each step, if $|P|=0$, then choose $v\in U$ arbitrarily and move $v$ from $U$ to $P$, recording $f(v) = 1$. If $|P|>0$, then let the most recently added vertex to $P$ be called $w$, acting as index $j = f(w)$ of the path. If $N_{\C'\setminus \D}^{\star_j}(w)\cap U$ is nonempty, then pick $u \in N_{\C'\setminus \D}^{\star_j}(w)\cap U$ arbitrarily, move $u$ from $U$ to $P$, record $f(u) = j+1$, and add $\C((wu)^{\star_j})$ to $\D$. Otherwise, if $N_{\C'\setminus \D}^{\star_j}(w)\cap U$ is empty, move $w$ from $P$ to $S$.

The algorithm terminates if $P$ has length $k$ or $|U|=0$. The algorithm must terminate because the vertices only move from $U$ to $P$ and from $P$ to $D$, so eventually $U = \emptyset$, unless the algorithm terminated with a path of length $k$.

Assume for sake of contradiction the algorithm terminates with $|V(P)|\leq k$. Then $|U|=0$ and $|S|\geq (1-\nu)n-k \geq 3\ep n$ at termination. Consider the first stage where $|S|=2\ep n$. Since the algorithm did not terminate with $|V(P)|= k+1$, we must have at this stage $|V(P)|\leq k$, and thus $|U| = |V'| - |V(P)| - |S| \geq \ep n$.

If $s\in S$ has was added to the path at index $i$, then $N_{\C'\setminus \D}^{\star_i}(s)\cap U=\emptyset$ by definition of $S$, since $\D$ only accumulates colors. Also observe that $|\D| \leq |S| + |V(P)| < (1-\nu-\ep)n$.

Since $|S| = 2\ep$, one of $\{s\in S\mid \star_{f(s)}=\rightarrow\}\dot \cup \{s\in S\mid \star_{f(s)}=\leftarrow\}$ must have size at least $\ep n$; without loss of generality, suppose $W=\{s\in S\mid \star_{f(s)}=\rightarrow\}$ has size at least $\ep n$. Then by \Cref{def:pseudo} (\ref{pseudo:edgecolor}), there must be an edge in $E^{\rightarrow}(W,U)$ with color in $\C'\setminus \D$, since $|\C'\setminus \D| \geq\ep n$, contradicting the definition of $S$. 

Thus the algorithm must terminate with $P$ a path of length $k$. This path has orientation $\star$ by construction, and it is rainbow because we only used edges with colors not previously chosen at any stage.
\end{proof}

\subsection{Supersaturation}\label{sec:prelim:supersat}

Supersaturation is the phenomenon whereby when a graph (or digraph or hypergraph) is much more than dense enough to contain a single copy of a substructure, it actually has many copies. For example, the K\H{o}v\'ari--S\'os--Tur\'an Theorem states that if an $n$-vertex graph $G$ has more than $C_{s,t} n^{2-1/s}$ edges, then $G$ contains a copy of the complete bipartite graph $K_{s,t}$. A standard supersaturation form of this theorem is that if $G$ is an $n$-vertex graph with $\alpha n^2$ edges, then $G$ contains $\Omega_\alpha(n^{s+t})$ copies of $K_{s,t}$, and as a consequence, $G$ contains $\Omega_\alpha(n^{|V(H)|})$ copies of every (fixed size) bipartite graph $H$. Note that we may only have such a statement for all $\alpha>0$ if $H$ is bipartite, since $G$ could be a balanced complete bipartite graph. We need the following directed version of this supersaturation statement.

\begin{lemma}\label{lem:supersat}
Let $H$ be a subdigraph of a blow-up of a consistently oriented path. For every $\alpha>0$, there exists $\theta > 0$ such that if $D$ is an $n$-vertex digraph with $\delta^+(D) \geq \alpha n$, then $D$ contains at least $\theta n^{|V(H)|}$ copies of $H$.
\end{lemma}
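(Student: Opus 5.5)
The plan is to reduce the statement to a hypergraph supersaturation result (Erd\H{o}s's theorem for complete $r$-partite $r$-uniform hypergraphs), applied to the hypergraph of directed walks in $D$. Since $H$ is a subdigraph of a blow-up of a consistently oriented path, fix $\ell$ and $t$ such that $H \subseteq P_\ell(t)$. Here $P_\ell(t)$ denotes the digraph with vertex classes $X_0,\dots,X_\ell$, each of size $t$, and all edges $\forw{xy}$ with $x \in X_i$, $y \in X_{i+1}$. We may take $t = |V(H)|$ for safety. It then suffices to show that $D$ contains at least $\theta' n^{(\ell+1)t}$ copies of $P_\ell(t)$ whose vertices are pairwise distinct.

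Indeed, suppose we have that many copies. Each such copy contains a copy of $H$ (on some fixed set of $|V(H)|$ of its vertices, via a fixed embedding $H \hookrightarrow P_\ell(t)$). A given copy of $H$ in $D$ arises in this way from at most $O_{\ell,t}(n^{(\ell+1)t-|V(H)|})$ copies of $P_\ell(t)$, because the remaining vertices can be chosen in at most that many ways. This yields $\theta n^{|V(H)|}$ copies of $H$.

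\textbf{Step 1: counting walks.} Let $\mathcal{W}$ be the set of sequences $(w_0,\dots,w_\ell) \in V(D)^{\ell+1}$ such that $\forw{w_iw_{i+1}} \in E(D)$ for all $i$. We build such walks greedily from $w_0$, using $\delta^{\rightarrow}(D) \ge \alpha n$ at each step; this gives $|\mathcal{W}| \ge n(\alpha n)^\ell = \alpha^\ell n^{\ell+1}$. Only out-degrees are used here, which matches the hypothesis of the lemma. We view $\mathcal{W}$ as an $(\ell+1)$-partite $(\ell+1)$-uniform hypergraph $\mathcal{H}$ on $\ell+1$ disjoint copies $V_0,\dots,V_\ell$ of $V(D)$. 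Its edge density is at least $\alpha^\ell$.

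\textbf{Step 2: hypergraph supersaturation.} By Erd\H{o}s's theorem in its supersaturated form, an $(\ell+1)$-partite $(\ell+1)$-graph with parts of size $n$ and at least $\alpha^\ell n^{\ell+1}$ edges contains at least $\theta'' n^{(\ell+1)t}$ copies of the complete $(\ell+1)$-partite hypergraph $K^{(\ell+1)}(t,\dots,t)$ that respect the partition. This follows from the usual iterated Cauchy--Schwarz/convexity argument, or by averaging over random $t$-subsets combined with the Kővári--Sós--Turán-type extremal bound. Such a copy consists of sets $X_i \subseteq V_i$ with $|X_i| = t$ such that every transversal $(x_0,\dots,x_\ell)$ is a walk. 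In particular, $\forw{xy} \in E(D)$ for all $x \in X_i$ and $y \in X_{i+1}$. So whenever the $(\ell+1)t$ underlying vertices of $D$ are distinct, this is exactly a copy of $P_\ell(t)$ in $D$.

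\textbf{Step 3: discarding degenerate configurations.} The number of choices of $(X_0,\dots,X_\ell)$ in which two of the underlying vertices of $D$ coincide is at most $O_{\ell,t}(n^{(\ell+1)t-1})$. For $n$ large this is at most half of $\theta'' n^{(\ell+1)t}$. We therefore retain $\theta' n^{(\ell+1)t}$ genuine copies of $P_\ell(t)$, and the reduction above completes the proof.

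The only nontrivial ingredient is Step 2. If it is preferable not to cite Erd\H{o}s's theorem, I would prove the needed supersaturation by induction on $\ell$, applying the bipartite K\H{o}v\'ari--S\'os--Tur\'an supersaturation at each step. At step $i$, one considers the auxiliary bipartite graph between ordered $t$-tuples that are blow-up-walk endpoint classes and vertices of $V_{i+1}$, and applies Jensen's inequality to the codegree counts. Keeping track of the density loss through these convexity steps, so that it stays a positive constant depending only on $\alpha,\ell,t$, is the main point requiring care.
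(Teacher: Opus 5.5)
Your proposal is correct and follows essentially the same route as the paper: count consistently oriented paths via the minimum out-degree, treat them as the edges of a $(\ell+1)$-partite $(\ell+1)$-uniform hypergraph, and apply the supersaturated hypergraph K\H{o}v\'ari--S\'os--Tur\'an (Erd\H{o}s) theorem to obtain many blow-ups. The differences are only technical. The paper randomly partitions $V(D)$ into $k$ parts, using Chernoff to keep out-degrees into the next part linear, so vertices are automatically distinct. You instead use disjoint copies of $V(D)$ and discard the $O(n^{(\ell+1)t-1})$ degenerate configurations, and you also spell out the reduction from $H$ to the full blow-up, which the paper only asserts.
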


Just as we cannot hope for a supersaturation statement for graphs which are not bipartite, we cannot generalize \Cref{lem:supersat} to $H$ which are not subgraphs of blow-ups of a consistently oriented path. This can be seen by considering the blow-up of a sufficiently long consistently oriented cycle. We also note that we cannot weaken the minimum out-degree condition in \Cref{lem:supersat} to a condition on the number of edges since a balanced complete bipartite digraph with all edges oriented from one part to the other does not have a consistently oriented path of length $2$.

To prove \Cref{lem:supersat}, we use a supersaturation form of the hypergraph K\H{o}v\'ari--S\'os--Tur\'an Theorem, whose proof is standard.

\begin{thm}\label{thm:hyperKST}
Let $k \geq 2$ be an integer.
For every $\beta > 0$, there exists $\theta > 0$ such that the following holds.
Let $\mathcal{H}$ be a $k$-uniform hypergraph with $\beta n^k$ edges. Then $\mathcal{H}$ contains $\theta n^{s_1+\cdots+s_k}$ copies of $K^{(k)}_{s_1,\dots,s_k}$, the complete $k$-uniform $k$-partite hypergraph with part sizes $s_1, \dots, s_k$.
\end{thm}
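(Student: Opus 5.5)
We prove \Cref{thm:hyperKST} by induction on $k$, following Erd\H{o}s's classical argument. It is convenient to prove the slightly stronger statement that the number of copies is at least $\theta n^{s_1+\cdots+s_k}$ counting ordered tuples $(A_1,\dots,A_k)$ of pairwise disjoint sets with $|A_i|=s_i$ such that $A_1\times\cdots\times A_k\subseteq E(\mathcal{H})$. Passing between labelled and unlabelled copies, and discarding tuples with repeated vertices, only loses constant factors and lower-order $O(n^{s_1+\cdots+s_k-1})$ terms. These are absorbed by taking $n$ large; for small $n$ we shrink $\theta$, noting that $\beta n^k\le n^k$ forces $n\ge$ some bound only when there are edges at all.

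\textbf{Base case $k=1$.} A $1$-uniform hypergraph with $\beta n$ edges is just a set of $\beta n$ vertices. It contains $\binom{\beta n}{s_1}\ge \theta n^{s_1}$ copies of $K^{(1)}_{s_1}$. (Alternatively, start at $k=2$ with the usual graph Kővári–Sós–Turán supersaturation; the same argument below covers it.)

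\textbf{Inductive step.} Fix a set $T$ of $s_k$ vertices and consider the link $(k-1)$-graph $\mathcal{H}_T=\{e\in\binom{V}{k-1}: e\cup\{v\}\in\mathcal{H}\ \forall v\in T\}$. Summing over $T$ gives
\[\sum_{|T|=s_k}|\mathcal{H}_T|=\sum_{f\in\binom{V}{k-1}}\binom{d(f)}{s_k},\]
where $d(f)$ is the number of edges containing $f$. Since $\sum_f d(f)=k\beta n^k$, the average of $d(f)$ is at least $c\beta n$. By convexity of $x\mapsto\binom{x}{s_k}$ (Jensen, using the convex extension that is $0$ below $s_k-1$), the sum is at least $\binom{n}{k-1}\binom{c\beta n}{s_k}\ge \beta' n^{k-1+s_k}$.

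Hence at least $\beta'' n^{s_k}$ sets $T$ have $|\mathcal{H}_T|\ge\beta'' n^{k-1}$. This follows by averaging, since each $|\mathcal{H}_T|\le n^{k-1}$ and there are at most $n^{s_k}$ sets $T$. For each such $T$, the induction hypothesis gives $\theta' n^{s_1+\cdots+s_{k-1}}$ copies of $K^{(k-1)}_{s_1,\dots,s_{k-1}}$ in $\mathcal{H}_T$. Each such copy together with $T$ is a copy of $K^{(k)}_{s_1,\dots,s_k}$ in $\mathcal{H}$, apart from the $O(n^{s_1+\cdots+s_{k-1}-1})$ copies meeting $T$, which we discard. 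Each copy of $K^{(k)}$ is produced at most a bounded number of times, so multiplying yields at least $\theta n^{s_1+\cdots+s_k}$ copies.

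The only mildly delicate step is the convexity estimate. Jensen's inequality must be applied to a genuinely convex function, and we need the average degree $c\beta n$ to exceed $s_k$ so that the binomial is of order $n^{s_k}$. Both hold for $n$ large, and small $n$ is handled by choosing $\theta$ small.
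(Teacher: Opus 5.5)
The paper gives no proof of this theorem; it only says the proof is standard. Your argument, Erdős's induction via links of $s_k$-sets and convexity of $x\mapsto\binom{x}{s_k}$, is exactly that standard proof and it is correct. It also gives the polynomial dependence of $\theta$ on $\beta$ that the paper remarks on.

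Two small corrections.

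Your claim that small $n$ can be handled by shrinking $\theta$ is false. For example, take $k=2$, $s_1=s_2=2$, $\beta=1/10$ and $n=10$. The cycle $C_{10}$ has $\beta n^2$ edges and contains no copy of $K_{2,2}$, so no $\theta>0$ works at that $n$. The statement must be read, as the paper uses it, for $n\ge n_0(k,\beta,s_1,\dots,s_k)$.

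The step discarding copies that meet $T$ is vacuous. By $k$-uniformity, $e\cup\{v\}\in\mathcal{H}$ forces $v\notin e$, so every edge of $\mathcal{H}_T$ is already disjoint from $T$.
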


We note that $\theta$ depends polynomially on $\beta$ in \Cref{thm:hyperKST}.

\begin{proof}[Proof of \Cref{lem:supersat}]
It suffices to prove the lemma for the blow-up of a consistently oriented path. Let $H$ be such a blow-up whose parts have size $s_1, \dots, s_k$, in that order. We will show that the number of copies of $H$ in $D$ is $\Omega_\alpha(n^{s_1+\cdots+s_k})$.

First, randomly partition $V(D)$ into $k$ parts $V_1, \dots, V_k$. By the Chernoff bound, every $v \in V_i$ has degree at least $\alpha n/2k$ to $V_{i+1}$ with high probability, and each $|V_i| \geq n/2k$ with high probability. Thus there are at least $\alpha^{k-1} (n/2k)^k$ oriented paths following this partition.

Form a $k$-partite $k$-uniform hypergraph $H$ with parts $V_1, \dots, V_k$ whose hyperedges are these oriented paths. By \Cref{thm:hyperKST}, there exists $\Omega_\alpha(n^{s_1+\cdots+s_k})$ many complete $k$-partite subhypergraphs of $H$ with parts sizes $s_1, \dots, s_k$, each of which translates to a copy of $H$ in $D$.
\end{proof}

\section{Gadgets}\label{sec:gadgets}

Recall \Cref{def:absorber} of an $\mathcal{S}$-absorber. 
What we refer to as `gadgets' are $\{c_1,c_2\}$-absorbers or $\{v_1,v_2\}$-absorbers for a pair of colors $c_1, c_2$ or a pair of vertices $v_1, v_2$.  That is, these absorbers absorb exactly one of a pair of colors or vertices. These are critical ingredients in our more complicated absorber constructions.

\subsection{Color gadget}

Our first gadget is a $\{c_1,c_2\}$-gadget for two colors $c_1$ and $c_2$, inspired by the gadget from~\cite{KLS}, which in turn is inspired by the (undirected) gadget in~\cite{GKKO}. In~\cite{KLS}, Szemer\'edi's regularity lemma and some probabilistic arguments were used to find these gadgets. While this approach could likely work for us, we sidestep the complications and poor parametric dependencies of the regularity lemma and give a proof of existence via supersaturation (\Cref{lem:supersat}) and an application of Suen's inequality (\Cref{thm:suen}).

We define a peculiar pseudorandom property which simply states that these color gadgets exist robustly. Then we show that a randomly colored randomly perturbed digraph satisfies this pseudorandom property with high probability. We do this so that any time we need these color gadgets, all we need to do is invoke the pseudorandom property.

\begin{definition}[Color-Gadget Pseudorandom]\label{def:cgpseudorandom}
An $n$-vertex edge-colored digraph $D$ is \emph{$\nu$-color-gadget pseudorandom} if for every  $V'\subseteq V(D)$ and $\C'\subseteq \C(D)$ such that $|V'|,|\C'|\geq (1-\nu)n$, for every $c_1,c_2\in \C(D)$, and for every $\star \in \arcset^7$, there exists a $(\{c_1,c_2\},\star)$-absorber with internal vertices in $V'$ and internal colors in $\C'$.
\end{definition}

\begin{lemma}[Color-Gadget Lemma]\label{lem:cg-pseudorandom}
Let $\frac{1}{n} \ll \frac{1}{C} \ll \nu \ll \alpha \leq 1$.
Let $D_0$ be an $n$-vertex digraph with minimum semi-degree $\delta^0(D_0) \geq \alpha n$.
Then $D=D_0\cup D(n,C/n)$, uniformly edge-colored from $[n]$, is $\nu$-color-gadget pseudorandom with high probability.
\end{lemma}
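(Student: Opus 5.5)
The plan is to fix an explicit template for the gadget and then find it by combining supersaturation (\Cref{lem:supersat}) for the deterministic part with Suen's inequality (\Cref{thm:suen}) for the random edges and colors.

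\textbf{Template.} The gadget has eight vertices $s=w_0,\dots,t$, carrying two edges $e_1=(ab)$ and $e_2=(a'b')$ whose colors are to be $c_1$ and $c_2$. The path $P_1$ traverses $e_1$ and bypasses $e_2$ by a detour through an extra vertex between $a'$ and $b'$. Symmetrically, $P_2$ traverses $e_2$ and bypasses $e_1$ by a detour between $a$ and $b$. The two paths visit the same eight vertices in different orders. Their remaining six edges are colored by the same six internal colors: an edge present in only one path must share its color with an edge present only in the other path, so a few color coincidences are forced. For each $\star\in\arcset^7$ one has to check that orientations can be assigned to the template edges so that both $P_1$ and $P_2$ have orientation $\star$. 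The template should respect the heuristics of \Cref{sec:sketch}:
\begin{itemize}
\item the edges needing a specific color ($e_1,e_2$) are deterministic;
\item the remaining deterministic edges form a subdigraph of a blow-up of a consistently oriented path;
\item the random edges form a forest;
\item only $O(1)$ color coincidences are required.
\end{itemize}
Since the orientation patterns are finitely many, I would check this by a short case analysis, possibly choosing between two or three templates depending on $\star$.

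\textbf{Counting candidate copies.} Fix $c_1,c_2,\star$, and work first with the deterministic digraph $D_0$ and its colors. Two ingredients are needed.
\begin{itemize}
\item A supply of edges of colors $c_1$ and $c_2$. Since $\delta^0(D_0)\ge\alpha n$, a Chernoff argument (as in the proof of \Cref{pseudo:match}) shows that w.h.p.\ $D_0$ contains $\Omega(n)$ disjoint edges of each color $c_i$. This can even be made robust: after deleting any $\nu n$ vertices, $\Omega(n)$ such edges remain.
\item Many ways to complete them. Using \Cref{lem:supersat} for the deterministic skeleton, with $\theta$ depending on $\alpha$, I would obtain $\Omega(n^{k})$ deterministic skeletons containing a $c_1$-edge and a $c_2$-edge. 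Here $k$ is the number of template vertices not forced by $e_1,e_2$. These skeletons should be spread out, so that any forbidden set of $\nu n$ vertices kills only a small fraction of them.
\end{itemize}
To keep the probability space clean, I would expose the colors of $D_0$ first and treat these counts as a conditioning event. After that, only the random edges of $D(n,C/n)$ and the colors on those random edges remain random.

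\textbf{Suen step.} Fix $V',\C'$. For each completion of a surviving skeleton, let the indicator $I$ be the event that all its random edges are present and its colors are distinct, lie in $\C'$, and satisfy the required coincidences. Each such $I$ has probability about $(C/n)^{r}n^{-q}$, where $r$ is the number of random edges and $q$ the number of coincidences. For a valid template, $\mu$ should be of order $\theta C^{r}n$. Because the random part is a forest, two overlapping copies share a sub-forest, and I expect $\Delta\lesssim\mu^2/(Cn)$ and $\delta=O(\mu/(Cn))$. Suen then gives failure probability $\exp(-\Omega(Cn))$, where the implied constant depends on $\alpha$, which beats the union bound over at most $4^n$ choices of $(V',\C')$ and $O(n^2)$ choices of $(c_1,c_2,\star)$ once $1/C\ll\nu\ll\alpha$.

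\textbf{Main obstacle.} The main obstacle is designing the template so that, simultaneously for every orientation $\star$, the deterministic part is path-blow-up-like and the repeated colors are few. The second difficulty is bounding $\Delta$: pairs of copies sharing an edge whose color is forced to coincide could dominate. I would first try to place the forced coincidences on random edges lying in different tree components, so that any overlap costs an extra factor of $C/n$.
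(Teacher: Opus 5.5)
Your high-level strategy (supersaturation for an uncolored skeleton, then Suen's inequality) is the paper's. However, the proposal stops exactly where the content of the lemma lies: you never exhibit a gadget that works for every $\star\in\arcset^7$, and the ``detour'' description is too loose to verify. Take a natural arrangement of that type: $P_1=s\,a\,b\,y\,a'\,x\,b'\,t$ and $P_2=s\,a\,y\,b\,x\,a'\,b'\,t$. It requires both $(by)^{\star_3}$ and $(yb)^{\star_3}$, and both $(a'x)^{\star_5}$ and $(xa')^{\star_5}$, i.e.\ two digons. Since $D_0$ may be an oriented graph, these need random edges. With only two shared edges and four forced coincidences, the expected number of copies then drops to $O(C^2)$, not order $n$.

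The missing idea is the paper's \emph{swap} gadget. Take a $K_{2,4}$ with edges $(w_1v_i)^{\star_1},(v_iw_2)^{\star_2},(w_3v_i)^{\star_6},(v_iw_4)^{\star_7}$ for $i\in\{1,2\}$. The edge $w_1v_i$ has color $c_i$, and each of the other three pairs carries one common color $a$, $b$, $d$ from disjoint pools $\C_1,\C_2,\C_3\subseteq\C'\setminus\{c_1,c_2\}$. Complete it by a length-three path $w_2u_1u_2w_3$; the absorbing paths are $w_1v_1w_2u_1u_2w_3v_2w_4$ and $w_1v_2w_2u_1u_2w_3v_1w_4$. Because $v_1$ and $v_2$ occupy slots with identical orientation patterns, this works verbatim for every $\star$ with no case analysis. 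The $K_{2,4}$ is also automatically a blow-up of a consistently oriented path (the $w_j$ that are in-neighbors of the $v_i$, then $\{v_1,v_2\}$, then the out-neighbors), so \Cref{lem:supersat} applies directly.

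Second, exposing the colors of $D_0$ first throws away the randomness the argument actually needs. \Cref{lem:supersat} counts uncolored copies. It says nothing about copies whose prescribed edges carry the already-exposed colors $c_1,c_2$, or share a color. So your ``$\Omega(n^k)$ skeletons containing a $c_1$-edge and a $c_2$-edge'', uniformly in $c_1,c_2$ and robustly in $V'$, does not follow from supersaturation. It is a concentration statement over the random coloring, and that is exactly what the paper proves:
\begin{itemize}
\item Among the $\theta n^6$ copies of the $K_{2,4}$ in $D[V']$, each is validly colored with probability $\frac{1}{64}n^{-5}$, so $\mu\ge\theta n/64$.
\item One has $\delta=O(1/n)$, and $\Delta=O(\mu)$ by checking $v(A)+\ell(A)-e(A)\ge 1$ for every nonempty overlap $A$.
\item Suen gives failure probability $e^{-\Omega(\theta n)}$. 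This beats the $\binom{n}{\nu n}^2 n^2\cdot 2^7$ union bound because $\nu\ll\alpha$.
\end{itemize}
The random edges play no role in this step. They enter only through \Cref{lem:connecting}, which supplies $w_2u_1u_2w_3$ once $D$ is pseudorandom. Putting random edges into the Suen count, as you propose, costs a factor $C/n$ each; counting that connecting path as three random edges would already make $\mu=O(C^3)$. Your bounds $\mu\asymp\theta C^r n$, $\Delta\lesssim\mu^2/(Cn)$, $\delta=O(\mu/(Cn))$ are asserted for a template you have not fixed, so they cannot be checked.
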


\begin{proof}
Our color gadgets will take the form depicted in \Cref{fig:colorgadget}. We first show that there are many copies of the underlying directed $K_{2,4}$ in $D_0$, then we use Suen's inequality to find an appropriately colored copy. Finally, we find the connecting path between $w_2$ and $w_3$ in \Cref{fig:colorgadget} using \Cref{lem:connecting}.

Fix $V'\sbeq V$ and $\C'\sbeq \C$, each of size $(1-\nu)n$, fix $\star \in \arcset^7$, and fix $c_1,c_2 \in \C(D)$. We will show that with very high probability, there exists a $(\{c_1,c_2\},\star)$-absorber with vertices in $V'$ and internal colors in $\C'$. Taking a union bound over all choices of $V', \C', \star, c_1, c_2$ will give the result.

Let $K=(v_1,v_2,w_1,w_2,w_3,w_4)\in V(D)^6$. We say that $K$ is a \emph{$K_{2,4}$ with orientation $\star$} if $(w_1v_i)^{\star_1}, (v_iw_2)^{\star_2}, (w_3v_i)^{\star_6}, (v_iw_4)^{\star_7} \in E(D)$ for both $i\in\{1,2\}$. We define $D[K]$ to be the subdigraph of $D$ on vertex set $\{v_1,v_2,w_1,w_2,w_3,w_4\}$ with only these edges specified. 

Arbitrarily partition $\C'\setminus\{c_1,c_2\}$ into sets $\C_1, \C_2, \C_3$ with $|\C_i| = n/4$ for all $i \in [3]$. We say that $K$ has a \emph{valid coloring with colors $(a,b,d)$} if $\C((w_1v_i)^{\star_1})=c_i$, $\C((v_i w_2)^{\star_2}) = a\in \C_1$, $\C((w_3v_i)^{\star_6}) = b\in \C_2$, and $\C((v_iw_4)^{\star_7}) = d\in\C_3$ for both $i \in \{1,2\}$. See \Cref{fig:colorgadget}.

Observe that $K_{2,4}$s with orientation $\star$ are blow-ups of a consistently oriented path. We apply \Cref{lem:supersat} to the digraph $D[V']$, which has minimum semi-degree at least $(\alpha-\nu)n \geq \alpha n/2$, obtaining a set $\K \sbeq V(D)^6$ of $K_{2,4}$'s with orientation $\star$, whose vertices are in $V'$, of size $|\K| \geq \theta n^6$, where $\theta$ depends on $\alpha$.

For each $K \in \K$, let $I_K$ be the indicator random variable for $K$ having a valid coloring. Let $\Gamma$ be the dependency graph on $\K$ with an edge connecting $K$ and $K'$ if $D[K]$ and $D[K']$ share an edge; we represent this as $K \sim K'$. Since the edges of $D$ receive colors independently, $\Gamma$ is a dependency graph. We now compute $\mu$, $\delta$, and $\Delta$ from \Cref{thm:suen}.

For each $K \in \K$, note that
\begin{equation}\label{eq:suen:prob}
\mathbb{P}(I_K=1) = \frac{(n/4)^3}{n^8}=\frac{1}{64}n^{-5} ,
\end{equation}
so that
\begin{equation}\label{eq:suen:mu}
\mu = \sum_{K\in\K}\mathbb{P}(I_K=1) \geq \theta n^6\cdot\frac{1}{64} n^{-5} =\frac{\theta}{64} n .
\end{equation}
Since $K' \sim K$ if and only if $D[K']$ and $D[K]$ share an edge, there are at most $8n^4$ neighbors of $K$ in $\Gamma$. Thus by~\eqref{eq:suen:prob},
\begin{equation}\label{eq:suen:delta}
\delta=\max_{K\in\K}\sum_{K'\sim K}\mathbb{P}(I_{K'}=1) \leq 8 n^4\cdot \frac{1}{64}n^{-5} =\frac{1}{8}n^{-1}.
\end{equation}
Lastly, to compute $\Delta$, let $X_{(a,b,d)}(K)$ be the event that $K \in \K$ has a valid coloring with colors $(a,b,d)$. We first observe that
\begin{align*}
    \Delta &= \sum_{K\sim K'}\mathbb{P}(I_K=I_{K'}=1)\\
    &= \frac{1}{2}\sum_{K\in \K}\mathbb{P}(I_K=1)\sum_{K'\sim K}\mathbb{P}(I_{K'}=1\mid I_K=1)\\
    &\leq \frac{1}{2}\sum_{K\in\K}\mathbb{P}(I_K=1)\max_{K_0\in\K}\sum_{K'\sim K_0}\mathbb{P}(I_{K'}=1\mid I_{K_0}=1)\\
    &=\frac{1}{2}\mu \max_{K_0\in\K}\sum_{K'\sim K_0}\mathbb{P}(I_{K'}=1\mid I_{K_0}=1) \\
    &=\frac{1}{2}\mu \max_{K_0\in\K}\sum_{K'\sim K_0}\sum_{(a,b,d) \in \C_1 \times \C_2 \times \C_3} \mathbb{P}(I_{K'}=1 \mid X_{(a,b,d)}(K_0)) \cdot \mathbb{P}(X_{(a,b,d)}(K_0) \mid I_{K_0} = 1) \\
    &\leq \frac{1}{2}\mu \max_{K_0\in\K} \max_{(a,b,d) \in \C_1\times \C_2 \times \C_3} \sum_{K'\sim K_0} \mathbb{P}(I_{K'}=1 \mid X_{(a,b,d)}(K_0)) .
\end{align*}
Fix $K_0=(v_1,v_2,w_1,w_2,w_3,w_4) \in \K$ and $(a,b,d) \in \C_1 \times \C_2 \times \C_3$. We aim to show that $\sum_{K'\sim K_0} \mathbb{P}(I_{K'}=1 \mid X_{(a,b,d)}(K_0)) = O(1)$, which implies that $\Delta = O(\mu)$, where the $O(\cdot)$ notation hides an absolute constant not depending on any of our parameters. We partition $\{K'\in\K\mid K'\sim K_0\}$ based on which edges $D[K_0]$ and $D[K']$ have in common. For each $A\sbeq E(D[K_0])$, let $\K_A=\{K'\in\K\mid E(D[K_0])\cap E(D[K'])=A\}$, and let
\[ \Delta_A = \sum_{K'\in \K_A} \mathbb{P}(I_{K'}=1 \mid X_{(a,b,d)}(K_0)) .\]
Since there are $O(1)$ many choices for $A$, it suffices to show that $\Delta_A = O(1)$ for every $A \neq \emptyset$. We omit the orientations on the edges of $A$ since they are given by $K_0$. Let $v(A)$ denote the number of vertices spanned by $A$, let $e(A)$ denote the number of edges in $A$, and let $\ell(A)$ be the number of $i$ such that $A$ contains an edge of a color in $\C_i$.

Observe that if $K' = (v_1',v_2',w_1',w_2',w_3',w_4')$ and $v_i w_j \in A$, then $w_j = w_j'$ since $c_1, c_2$ are specified and $\C_1, \C_2, \C_3$ are disjoint, but it is possible that $v_i = v_i'$ or $v_i = v_{3-i}'$. This means that $|\K_A| \leq 2n^{6-v(A)}$, that is, there is at most $2n^{6-v(A)}$ choices for $K' \in \K_A$: a factor of $n$ for each vertex of $K'$ not in $K$, and a factor of two for possibly swapping $v_1$ and $v_2$. There are at most $n^{3-\ell(A)}$ choices for the remaining colors of $K' \in \K(A)$ to produce a valid coloring, and all of these colors appear on $K'$ with probability $n^{-8+e(A)}$ when conditioned on $X_{(a,b,d)}(K_0)$. Thus
\begin{equation}\label{eq:DeltaA}
\Delta_A \leq 2n^{6-v(A)} \cdot n^{3-\ell(A)} \cdot n^{-8+e(A)} = 2n^{1-(v(A)+\ell(A)-e(A))} .
\end{equation}
To show that $\Delta_A = O(1)$, it suffices to show that $v(A)+\ell(A)-e(A) \geq 1$ for all choices of $A \neq \emptyset$. We consider various cases based on $v(A)-e(A)$. 
\begin{enumerate}
\item $v(A)-e(A)\geq 1$. Thus $v(A)+\ell(A)-e(A) \geq 1$ trivially. 
\item $v(A)-e(A)=0$. Then there is at least one cycle in $A$, so there must be an edge with a color in some $\C_i$, hence $\ell(A)\geq 1$. Thus $v(A)+\ell(A)-e(A)\geq 1$.
\item $v(A)-e(A)=-1$. Then there are at least two cycles in $A$, so there must be edges with colors in $\C_i$ for two distinct values of $i$, hence $\ell(A)\geq 2$. Thus $v(A)+\ell(A)-e(A)\geq 1$. 
\item $v(A)-e(A)\leq -2$. Since every four vertices of $K_{2,4}$ spans at most four edges, and every five vertices of $K_{2,4}$ spans at most six edges, we have that $v(A) = 6$ and $e(A) = 8$. Thus $\ell(A) = 3$, and $v(A)+\ell(A)-e(A) = 1$ as desired.
\end{enumerate}
Putting this all together, we have that $\Delta_A = O(1)$ for every $A$, and hence
\[ \Delta \leq O(\mu) .\]

By \Cref{thm:suen}, using~\eqref{eq:suen:mu} and~\eqref{eq:suen:delta}, the probability that there is no $K \in \K$ with a valid coloring is at most
\[ \exp\left( - \min\left( \frac{\mu^2}{8\Delta}, \frac{\mu}{6\delta}, \frac{\mu}{2} \right) \right) \leq \exp\left( - \min\left( \Omega(\mu), \Omega(n\mu), \frac{\mu}{2} \right) \right) = \exp\left( - \Omega(\theta n) \right) .\]

We now take a union bound over all choices of $V', \C', c_1, c_2, \star$: the probability that $D$ does not have a $K \in \K$ with a valid coloring is at most
\[ \binom{n}{\nu n}^2 n^2 2^7 e^{-\Omega(\theta n)} = o(1) ,\]
since $\nu \ll \alpha$, and $\theta$ depends only on $\alpha$ via \Cref{lem:supersat}. Thus with high probability, $D$ contains a $K = (v_1,v_2,w_1,w_2,w_3,w_4) \in \K$ with a valid coloring no matter the choice of $V'$ and $\C'$.

Finally, by \Cref{lem:ec-pseudorandom}, $D$ is $(\alpha,\ep)$-edge-color pseudorandom for some $\ep\ll\alpha$ (see~\Cref{def:pseudo}) with high probability, since $C$ is sufficiently large. Thus by \Cref{lem:connecting}, we can find a path $w_2 u_1 u_2 w_3$ of length $3$ between $w_2$ and $w_3$ with orientation $(\star_3,\star_4,\star_5)$, avoiding the vertices and colors in $V(D)\setminus V'$ and $\C(D)\setminus \C'$ and $K$. This yields our desired $\{c_1,c_2\}$-gadget pictured in \Cref{fig:colorgadget}. The $c_1$-absorbing path is $w_1v_1w_2u_1u_2w_3v_2w_4$ and the $c_2$-absorbing path is $w_1v_2w_2u_1u_2w_3v_1w_4$.
\end{proof}

\begin{figure}
\begin{center}
   \begin{tikzpicture}[
    >=Stealth,
    vertex/.style={circle,draw,inner sep=1.5pt},
    edgelabel/.style={midway, fill=white, inner sep=1pt, text=black}
]

\tikzset{
  pics/colorgadget/.style args={#1/#2/#3}{
    code={
      \draw (0,0) -- (.75,.75) -- (1.5,0) -- (.75,-.75) -- cycle;
      \draw (0.25,0) -- (1.25,0);
      \node[draw=orange,draw opacity =0,circle,inner sep=1pt] at (.75,0.3) {#1};
      \node[draw=purple,draw opacity =0,circle,inner sep=1pt] at (.75,-0.3) {#2};
      \node at (.75,-.95) {#3};
    }
  }
}

\node[vertex,label=above:$v_1$] (v1) at (2,2.6) {};
\node[vertex,label=below:$v_2$] (v2) at (2,0) {};
\node[vertex,label=left:$w_1$] (w1) at (0,1.3) {};
\node[vertex,label=left:$w_2$] (w2) at (1.3,1.3) {};
\node[vertex,label=right:$w_3$] (w3) at (2.7,1.3) {};
\node[vertex,label=right:$w_4$] (w4) at (4,1.3) {};

\draw[<-, orange] (v1) -- node[edgelabel] {$c_1$} (w1);
\draw[->, purple] (w1) -- node[edgelabel] {$c_2$} (v2);
\draw[->, green!60!black] (v1) -- node[edgelabel] {$d$} (w4);
\draw[<-, green!60!black] (w4) -- node[edgelabel] {$d$} (v2);

\draw[<-, red] (v1) -- node[edgelabel] {$a$} (w2);
\draw[->, red] (w2) -- node[edgelabel] {$a$} (v2);
\draw[->, blue] (v1) -- node[edgelabel] {$b$} (w3);
\draw[<-, blue] (w3) -- node[edgelabel] {$b$} (v2);

\draw[dotted] (w2) -- 
    node[edgelabel, below] {$\scriptstyle \star_3\star_4\star_5$} 
    (w3);

\begin{scope}[xshift=-1cm]

\node at (-0.4,1.3) {$:=$};

\pic at (-2.2,1.3) {colorgadget={$c_1$/$c_2$/$\scriptstyle\star_1\ldots \star_7$}};

\end{scope}

\end{tikzpicture}

\hrule

\begin{tikzpicture}[
    >=Stealth,
    vertex/.style={circle,draw,inner sep=1.5pt},
    edgelabel/.style={midway, fill=white, inner sep=1pt, text=black}
]

\tikzset{
  pics/colorgadget/.style args={#1/#2/#3}{
    code={
      \draw (0,0) -- (.75,.75) -- (1.5,0) -- (.75,-.75) -- cycle;
      \draw (0.25,0) -- (1.25,0);

      \node[draw=orange,circle,inner sep=1pt] at (.75,0.3) {#1};

      \node at (.75,-0.3) {#2};
      \node at (.75,-.95) {#3};
    }
  }
}

\node[vertex,label=above:$v_1$] (v1) at (2,2.6) {};
\node[vertex,label=below:$v_2$] (v2) at (2,0) {};
\node[vertex,label=left:$w_1$] (w1) at (0,1.3) {};
\node[vertex,label=left:$w_2$] (w2) at (1.3,1.3) {};
\node[vertex,label=right:$w_3$] (w3) at (2.7,1.3) {};
\node[vertex,label=right:$w_4$] (w4) at (4,1.3) {};

\draw[->, purple, opacity=0.1] (w1) -- (v2);
\draw[->, red, opacity=0.1] (w2) -- (v2);
\draw[->, blue, opacity=0.1] (v1) -- (w3);
\draw[->, green!60!black, opacity=0.1] (v1) -- (w4);

\draw[<-, orange] (v1) -- node[edgelabel] {$c_1$} (w1);
\draw[<-, green!60!black] (w4) -- node[edgelabel] {$d$} (v2);
\draw[<-, red] (v1) -- node[edgelabel] {$a$} (w2);
\draw[<-, blue] (w3) -- node[edgelabel] {$b$} (v2);

\draw[dotted] (w2) -- 
    node[edgelabel, below] {$\scriptstyle \star_3\star_4\star_5$} 
    (w3);

\begin{scope}[xshift=-1cm]
\node at (-0.4,1.3) {$:=$};
\pic at (-2.2,1.3) {colorgadget={$c_1$/$c_2$/$\scriptstyle\star_1\ldots\star_7$}};
\end{scope}

\end{tikzpicture}
\vrule\hspace{1mm}
\begin{tikzpicture}[
    >=Stealth,
    vertex/.style={circle,draw,inner sep=1.5pt},
    edgelabel/.style={midway, fill=white, inner sep=1pt, text=black}
]

\tikzset{
  pics/colorgadget/.style args={#1/#2/#3}{
    code={
      \draw (0,0) -- (.75,.75) -- (1.5,0) -- (.75,-.75) -- cycle;
      \draw (0.25,0) -- (1.25,0);

      \node at (.75,0.3) {#1};

      \node[draw=purple,circle,inner sep=1pt] at (.75,-0.3) {#2};

      \node at (.75,-.95) {#3};
    }
  }
}

\node[vertex,label=above:$v_1$] (v1) at (2,2.6) {};
\node[vertex,label=below:$v_2$] (v2) at (2,0) {};
\node[vertex,label=left:$w_1$] (w1) at (0,1.3) {};
\node[vertex,label=left:$w_2$] (w2) at (1.3,1.3) {};
\node[vertex,label=right:$w_3$] (w3) at (2.7,1.3) {};
\node[vertex,label=right:$w_4$] (w4) at (4,1.3) {};

\draw[<-, orange, opacity=0.1] (v1) -- (w1);
\draw[<-, red, opacity=0.1] (v1) -- (w2);
\draw[<-, blue, opacity=0.1] (w3) -- (v2);
\draw[<-, green!60!black, opacity=0.1] (w4) -- (v2);

\draw[->, purple] (w1) -- node[edgelabel] {$c_2$} (v2);
\draw[->, green!60!black] (v1) -- node[edgelabel] {$d$} (w4);
\draw[->, red] (w2) -- node[edgelabel] {$a$} (v2);
\draw[->, blue] (v1) -- node[edgelabel] {$b$} (w3);

\draw[dotted] (w2) -- 
    node[edgelabel, below] {$\scriptstyle \star_3\star_4\star_5$} 
    (w3);

\begin{scope}[xshift=-1cm]
\node at (-0.4,1.3) {$:=$};
\pic at (-2.2,1.3) {colorgadget={$c_1$/$c_2$/$\scriptstyle\star_1\ldots\star_7$}};
\end{scope}

\end{tikzpicture}
\end{center}
\caption{$(\{c_1,c_2\},\star)$-absorber with $\star=(\rightarrow,\leftarrow,\star_3,\star_4,\star_5,\leftarrow,\rightarrow)$ and $c_1$-absorbing and $c_2$-absorbing paths shown. We use the short-hand notation for this gadget given on the left side of the figure in later constructions.}
\label{fig:colorgadget}
\end{figure}

\subsection{Vertex gadget}

Our next gadget is a $\{v_1,v_2\}$-gadget for two vertices $v_1$ and $v_2$. An important ingredient in this gadget is the simple color gadget. Within the proof, there are two different constructions; the first is a simpler construction which requires two consecutive edges in the same direction, and the second assumes a longer sequence of alternating edges. 

\begin{lemma}[Vertex-Gadget Lemma]\label{lem:vtxgadget}
Let $\frac{1}{n} \ll \ep\ll\nu\ll \alpha \leq 1$.
Let $D$ be an $(n,\alpha,\ep)$-edge-color-pseudorandom and $2\nu$-color-gadget-pseudorandom digraph, and let $\star\in\arcset^{48}$. Then for every $V'\subseteq V(D)$ and $\C'\subseteq \C(D)$ such that $|V'|,|\C'|\geq (1-\nu)n$, and for every $v_1,v_2\in V(D)$, there exists an $(\{v_1,v_2\},\star)$-absorber with internal vertices in $V'$ and internal colors in $\C'$. 
\end{lemma}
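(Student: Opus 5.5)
We construct the $\{v_1,v_2\}$-absorber from two ``slots'', each of length $2$. The two slots share a single spare vertex $z$, and colour gadgets from \Cref{def:cgpseudorandom} repair the resulting colour discrepancy. Fix $V',\C'$ and $v_1,v_2$. Throughout, we maintain a forbidden set $F$ of already used vertices and colours (together with $V(D)\setminus V'$ and $\C(D)\setminus\C'$); it has size at most $\nu n+O(1)$, so all the earlier lemmas apply with $2\nu$ in place of $\nu$.

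\textbf{Step 1 (the slots).} Choose positions $j$ and $j+5$ in the orientation $\star$. Slot $1$ will be $x_1\,{\star_j}\,(\cdot)\,{\star_{j+1}}\,y_1$ and slot $2$ will be $x_2\,{\star_{j+5}}\,(\cdot)\,{\star_{j+6}}\,y_2$. The middle vertex of slot $i$ is either $v_i$ or $z$, so that each activation uses exactly one of $v_1,v_2$ and uses $z$ exactly once. To find these vertices, let $X_i\subseteq N^{-\star}(v_i)\cap V'$ and $Y_i\subseteq N^{\star'}(v_i)\cap V'$ be the appropriate deterministic neighbourhoods, of size at least $(\alpha-\nu)n$, where $\star,\star'$ are the relevant orientations.

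We then pick $z$ with the correct edges into each of $X_1,Y_1,X_2,Y_2$. The argument combines \Cref{def:pseudo}~(\ref{pseudo:edgecolor}) with a counting step. For a fixed target set $W$ of size at least $\ep n$ and a fixed colour set of size at least $\ep n$, all but fewer than $\ep n$ vertices $u$ have an edge of the required direction to $W$ with an allowed colour; otherwise the bad $u$'s, together with $W$, would violate (\ref{pseudo:edgecolor}). Applying this to the four sets, with disjoint colour classes, leaves at least $n-O(\ep n)-\nu n$ good choices of $z$. We then fix the four edges at $z$ and the four edges at $v_1,v_2$, with all eight colours distinct and outside $F$. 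Distinctness is possible because $|\C^\star(v)|\geq\alpha n/2$ by (\ref{pseudo:colordiversity}) and the neighbourhoods are large.

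Activation $1$ uses the colours $c(x_1v_1),c(v_1y_1),c(x_2z),c(zy_2)$. Activation $2$ uses $c(x_1z),c(zy_1),c(x_2v_2),c(v_2y_2)$.

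\textbf{Step 2 (fixing the colours).} Pair the colours as
\[ \{c(x_1v_1),c(x_1z)\},\ \{c(v_1y_1),c(zy_1)\},\ \{c(x_2z),c(x_2v_2)\},\ \{c(zy_2),c(v_2y_2)\}. \]
By $2\nu$-color-gadget pseudorandomness, we find four vertex-disjoint colour gadgets, each of length $7$, for these pairs, with the orientations read off from the appropriate segments of $\star$. Each gadget avoids $F$, and $F$ is updated after each one. In each activation, every gadget is activated along the absorbing path of the colour from its pair that the slots did \emph{not} use. Hence both activations use the same colour set apart from the absorbed vertex, and both have the same vertex set apart from $v_1$ versus $v_2$.

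\textbf{Step 3 (linking).} String the two slots and the four gadgets together in a fixed order. Consecutive pieces are joined by rainbow paths of length $3$ from \Cref{lem:connecting}, each avoiding $F$. The total length is then $2+2+4\cdot 7+5\cdot 3=47$. The remaining edge is provided by taking one connection of length $4$, obtained from \Cref{lem:shortpaths} or by first stepping to a neighbour and then applying \Cref{lem:connecting}; alternatively one can place $s$ or $t$ one step further out. The positions in $\star$ are assigned to the pieces in order, so every piece receives exactly the orientation it needs. Since every slot works for an arbitrary pair of consecutive orientations (we only use $N^{-\star}$ and $N^{\star'}$ of $v_i$), no case split on $\star$ should be needed.

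\textbf{Main obstacle.} I expect the main difficulty to be Step 1. We need to find $z$ with four edges into the prescribed neighbourhoods of distinct colours, and simultaneously ensure that $z,x_i,y_i$ are all distinct and avoid $F$. The counting via (\ref{pseudo:edgecolor}) must be done carefully: the four colour classes should be chosen disjoint in advance, the sets $X_i,Y_i$ must be shrunk to exclude $v_1,v_2$ and each other's chosen vertices, and any colour collisions with the edges at $v_i$ must be resolved. If this becomes awkward, a fallback is to choose $x_i,y_i$ only after choosing $z$, taking $x_i\in X_i\cap N(z)$ in the appropriate direction.
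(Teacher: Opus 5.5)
Your proposal is correct, and it takes a genuinely different route from the paper.

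\textbf{The paper's route.} The paper splits into cases according to $\star$.
\begin{itemize}
\item If $\star_i=\star_{i+1}$ for some $2\le i\le 7$, it builds a rainbow spine $w_1\dots w_{18}$ with \Cref{lem:shortpaths}. Here $w_{i-1},w_i$ lie in coloured neighbourhoods of $v_1$, and $w_{i+1},w_{i+2}$ lie in coloured neighbourhoods of $v_2$. It then inserts $v_1$ just before, or $v_2$ just after, the spine edge carrying the repeated orientation, so that this edge shifts by one position between the two absorbing paths. This case needs three colour gadgets.
\item If $(\star_2,\dots,\star_8)$ is alternating, it uses a segment $w_1w_2w_3$ that the two absorbing paths traverse in opposite directions. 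This case needs four colour gadgets.
\end{itemize}

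\textbf{Your route.} Your shared switch vertex $z$ sits in slot $2$ when $v_1$ is in slot $1$, and in slot $1$ when $v_2$ is in slot $2$. This makes the vertex bookkeeping trivial. Each slot edge only uses an in- or out-neighbourhood of $v_i$, so every $\star$ is handled uniformly with no case analysis.

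The price is that you always need four colour gadgets, and you need a vertex $z$ with four prescribed edges. The latter is exactly what the ``all but fewer than $\ep n$ vertices'' consequence of \Cref{def:pseudo} (\ref{pseudo:edgecolor}) supplies. Each slot is a $K_{2,2}$ in the spirit of \Cref{lem:specialabsorber}. Choosing $z$ only after the neighbourhoods of $v_i$ are fixed is what lets it work for a prescribed vertex $v_i$.

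\textbf{Points for the write-up.} Fix the colours in the order you hint at:
\begin{itemize}
\item First use \Cref{def:pseudo} (\ref{pseudo:colordiversity}) to shrink $X_1,Y_1,X_2,Y_2$ to pairwise disjoint sets whose edges to $v_i$ are rainbow, with colours in four pre-chosen disjoint classes.
\item Then choose $z$ using four further disjoint colour classes. All eight slot colours are then automatically distinct.
\item Include $v_1,v_2$ in $F$ from the start.
\end{itemize}
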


\begin{proof}
We have two constructions depending on $\star$. In the first construction, we suppose that there exists $i \in \{2,\dots, 7\}$ such that $\star_i=\star_{i+1}$. We find the first half of the gadget pictured in \Cref{fig:ordvertexgadget} using \Cref{lem:shortpaths} (this half is rainbow), and then we use some color gadgets to account for the different colors used in the $v_1$- and $v_2$-absorbing paths.

Let $\C_{i-1}=\C^{-\star_{i-1}}(v_1)\cap\C'$, $\C_{i}=\C^{\star_i}(v_1)\cap\C'$, $\C_{i+1}=\C^{-\star_{i+1}}(v_2)\cap\C'$, and $\C_{i+2}=\C^{\star_{i+2}}(v_2)\cap\C'$. By \Cref{def:pseudo} (\ref{pseudo:colordiversity}), each of these has size at least $\alpha n/2 -\nu n\geq \alpha n/4$. Let $\C_j'\sbeq \C_j$ be chosen to be disjoint and each of size $|\C_j'|=\alpha n/16$ for $j\in\{i-1,i,i+1,i+2\}$.     

Now let $S_{i-1}=N^{-\star_{i-1}}_{\C_{i-1}'}(v_1)\cap V'$, $S_{i}=N^{\star_{i}}_{\C_{i}'}(v_1)\cap V'$, $S_{i+1}=N^{-\star_{i+1}}_{\C_{i+1}'}(v_2)\cap V'$, and $S_{i+2}=N^{\star_{i+2}}_{\C_{i+2}'}(v_2)\cap V'$, which each have size at least $\alpha n/16-\nu n\geq \alpha n/32\gg8\ep n$. Let $S_j'\sbeq S_j$ be chosen to be disjoint and each of size $|S_j'|=2\ep n$ for $j\in\{i-1,i,i+1,i+2\}$. For $j\in [18] \setminus \{i-1,i,i+1,i+2\}$, let $S_j'\sbeq V'$ be chosen to be each of size $2\ep n$ and so that the sets $S_1',\ldots, S_{18}'$ are pairwise disjoint.

We apply \Cref{lem:shortpaths} to find a rainbow path $w_1 \dots w_{18}$ through $S_1', \ldots S_{18}'$ with orientation $(\star_1,\star_2,\dots,\star_{i-1},\star_i=\star_{i+1},\star_{i+2},\ldots, \star_{18})$, that is, $(\star_1,\ldots, \star_{18} )$ with index $i+1$ removed, where $w_j\in S_j'$ for all $j \in [18]$, and where the color set $\D$ of the path is a subset of $\C' \setminus ( \C_{i-1}' \cup \C_i' \cup \C_{i+1}' \cup \C_{i+2}' )$, which has size at least $(1-\nu-\alpha/4)n \geq 18\ep n$. Let $c_1=\C((w_{i-1}w_i)^{\star_{i-1}})$, $c_2=\C((w_{i-1}v_1)^{\star_{i-1}}),c_3=\C((v_1w_i)^{\star_{i}})$, $d_1=\C((w_{i+1}w_{i+2})^{\star_{i+2}})$, $d_2=\C((w_{i+1}v_2)^{\star_{i+1}})$, and $d_3=\C((v_2w_{i+2})^{\star_{i+2}})$, as pictured in \Cref{fig:ordvertexgadget}. 

For $j \in [3]$, define $\star'_j = (\star_{21+10(j-1)+1},\ldots,\star_{21+10(j-1)+7})$. Since $D$ is $2\nu$-color-gadget-pseudorandom, by \Cref{def:cgpseudorandom}, we may successively find vertex-disjoint and color-disjoint $(\{c_j,d_j\},\star'_j)$-absorbers $(A_j,s_j,t_j)$ with internal vertices in $V'\setminus \{v_1, v_2, w_1,\ldots, w_{18}\}$ and internal colors in $\C'\setminus (\D\cup\{c_2,c_3,d_2,d_3\})$.

Lastly, apply \Cref{lem:connecting} to successively find vertex-disjoint and color-disjoint rainbow paths of length three (with internal vertices in $V'$ and disjoint from the $A_j$s, and colors in $\C'$ and disjoint from the $A_j$s) from $w_{18}$ to $s_1$ with orientation $(\star_{19},\star_{20},\star_{21})$, from $t_1$ to $s_2$ with orientation $(\star_{29},\star_{30},\star_{31})$, and from $t_2$ to $s_3$ with orientation $(\star_{39},\star_{40},\star_{41})$. See \Cref{fig:ordvertexgadget}. 

The $v_1$-absorbing path arises from taking the path $w_1\ldots w_{i-1}v_1w_iw_{i+1}\ldots w_{18}$ for the first 18 edges and following the $c_1$-absorbing path of $A_1$, the $d_2$-absorbing path of $A_2$, and the $d_3$-absorbing path of $A_3$. Similarly, the $v_2$-absorbing path arises from taking the path $w_1\ldots w_{i+1}v_{2}w_{i+2}\ldots w_{18}$ for the first 18 edges and following the $d_1$-absorbing path of $A_1$, the $c_2$-absorbing path of $A_2$, and the $c_{3}$-absorbing path of $A_3$.  

\begin{figure}
\begin{center}

\begin{tikzpicture}[
    >=Stealth,
    vertex/.style={circle,draw,inner sep=1.5pt},
    lbl/.style={draw=none,fill=none,inner sep=1pt},
    edgelabel/.style={midway, fill=white, inner sep=1pt, text=black}
]

\tikzset{
    pics/vertexgadget/.style args={#1/#2/#3}{
      code={
        \draw (0,0) -- (.375,.65) -- (1.125,.65) -- (1.5,0) -- (1.125,-.65) -- (.375,-.65) -- cycle;
        \draw (0.25,0) -- (1.25,0);
        \node[draw,draw opacity=0,circle,inner sep=1pt] at (.75,0.3) {#1};
        \node[draw,,draw opacity =0,circle,inner sep=1pt] at (.75,-0.3) {#2};
        \node at (.85,-.85) {#3};

        \coordinate (-left)  at (0,0);
        \coordinate (-right) at (2,0);
      }
    },
    pics/colorgadget/.style args={#1/#2/#3/#4/#5}{
      code={
        \draw (0,0) -- (.75,.75) -- (1.5,0) -- (.75,-.75) -- cycle;
        \draw (0.25,0) -- (1.25,0);
        \node[draw=#4,circle,inner sep=1pt] at (.75,0.3) {#1};
        \node[draw=#5,circle,inner sep=.8pt] at (.75,-0.3) {#2};
        \node at (.75,-.95) {#3};

        \coordinate (-left)  at (0,0);
        \coordinate (-right) at (1.5,0);
      }
    }
}

\node[vertex,label=below:$w_1$] (w1) at (2,0) {};
\node[vertex,label=below:$w_{i-1}$] (wi-1) at (3.5,0) {};
\node[vertex,label=below:$w_i$] (wi) at (5,0) {};
\node[vertex,label=below:$w_{i+1}$] (wi+1) at (6,0) {};
\node[vertex,label=below:$w_{i+2}$] (wi+2) at (7.5,0) {};
\node[vertex,label=above:$v_1$] (v1) at (4.25,1.3) {};
\node[vertex,label=above:$v_2$] (v2) at (6.75,1.3) {};

\draw[dotted] (w1) -- node[lbl,above=2pt,font=\scriptsize] {$\star_{1}\ldots\star_{i-2}$} (wi-1);

\draw[<-, red] (wi-1) -- node[edgelabel] {$c_1$} (wi);
\draw[<-, yellow!95!black] (wi-1) -- node[edgelabel] {$c_2$} (v1);
\draw[->, blue] (v1) -- node[edgelabel] {$c_3$} (wi);

\draw[->] (wi) -- (wi+1);

\draw[<-, orange] (wi+1) -- node[edgelabel] {$d_1$} (wi+2);
\draw[->, green!60!black] (wi+1) -- node[edgelabel] {$d_2$} (v2);
\draw[<-, purple] (v2) -- node[edgelabel] {$d_3$} (wi+2);

\pic (cd1) at (8.75,0) {colorgadget={$c_1$}/{$d_1$}/{\scriptsize$\star_{22}\ldots\star_{28}$}/{red,draw opacity =0}/{orange,draw opacity =0}};
\pic (cd2) at (11.45,0) {colorgadget={$c_2$}/{$d_2$}/{\scriptsize$\star_{32}\ldots\star_{38}$}/{yellow!95!black,draw opacity =0}/{green!60!black,draw opacity =0}};
\pic (cd3) at (14.1,0) {colorgadget={$c_3$}/{$d_3$}/{\scriptsize$\star_{42}\ldots\star_{48}$}/{blue,draw opacity =0}/{purple,draw opacity =0}};

\draw[dotted] (wi+2) -- node[lbl,above=2pt,font=\scriptsize] {$\star_{i+3}\ldots\star_{21}$} (cd1-left);
\draw[dotted] (cd1-right) -- node[lbl,above=2pt,font=\scriptsize] {$\star_{29}\ldots\star_{31}$} (cd2-left);
\draw[dotted] (cd2-right) -- node[lbl,above=2pt,font=\scriptsize] {$\star_{39}\ldots\star_{41}$} (cd3-left);

\begin{scope}[xshift=.8cm]
    \pic at (-1.2,0) {vertexgadget={$v_1$}/{$v_2$}/{\scriptsize$\star_1,\ldots,\star_{48}$}};
    \node at (.6,0) {$:=$};
\end{scope}

\end{tikzpicture}

\hrule

\begin{tikzpicture}[
    >=Stealth,
    vertex/.style={circle,draw,inner sep=1.5pt},
    lbl/.style={draw=none,fill=none,inner sep=1pt},
    edgelabel/.style={midway, fill=white, inner sep=1pt, text=black}
]

\tikzset{
    pics/vertexgadget/.style args={#1/#2/#3}{
      code={
        \draw (0,0) -- (.375,.65) -- (1.125,.65) -- (1.5,0) -- (1.125,-.65) -- (.375,-.65) -- cycle;
        \draw (0.25,0) -- (1.25,0);
        \node[draw,circle,inner sep=1pt] at (.75,0.3) {#1};
        \node at (.75,-0.3) {#2};
        \node at (.85,-.85) {#3};

        \coordinate (-left)  at (0,0);
        \coordinate (-right) at (2,0);
      }
    },
    pics/colorgadget/.style args={#1/#2/#3/#4/#5}{
      code={
        \draw (0,0) -- (.75,.75) -- (1.5,0) -- (.75,-.75) -- cycle;
        \draw (0.25,0) -- (1.25,0);
        \node[draw=#4,circle,inner sep=1pt] at (.75,0.3) {#1};
        \node[draw=#5,circle,inner sep=.8pt] at (.75,-0.3) {#2};
        \node at (.75,-.95) {#3};
        \coordinate (-left)  at (0,0);
        \coordinate (-right) at (1.5,0);
      }
    }
}

\node[vertex,label=below:$w_1$] (w1) at (2,0) {};
\node[vertex,label=below:$w_{i-1}$] (wi-1) at (3.5,0) {};
\node[vertex,label=below:$w_i$] (wi) at (5,0) {};
\node[vertex,label=below:$w_{i+1}$] (wi+1) at (6,0) {};
\node[vertex,label=below:$w_{i+2}$] (wi+2) at (7.5,0) {};
\node[vertex,label=above:$v_1$] (v1) at (4.25,1.3) {};
\node[vertex] (v2) at (6.75,1.3) {};

\draw[dotted] (w1) -- node[lbl,above=2pt,font=\scriptsize] {$\star_{1}\ldots\star_{i-2}$} (wi-1);

\draw[<-, red, opacity=0.1] (wi-1) -- (wi);

\draw[<-, yellow!95!black] (wi-1) -- node[edgelabel] {$c_2$} (v1);
\draw[->, blue] (v1) -- node[edgelabel] {$c_3$} (wi);

\draw[->] (wi) -- (wi+1);

\draw[<-, orange] (wi+1) -- node[edgelabel] {$d_1$} (wi+2);

\draw[->, green!60!black, opacity=0.1] (wi+1) -- (v2);
\draw[<-, purple, opacity=0.1] (v2) -- (wi+2);

\pic (cd1) at (8.75,0) {colorgadget={$c_1$}/{$d_1$}/{\scriptsize$\star_{22}\ldots\star_{28}$}/{red}/{orange, draw opacity=0}};
\pic (cd2) at (11.45,0) {colorgadget={$c_2$}/{$d_2$}/{\scriptsize$\star_{32}\ldots\star_{38}$}/{yellow!95!black, draw opacity=0}/{green}};
\pic (cd3) at (14.1,0) {colorgadget={$c_3$}/{$d_3$}/{\scriptsize$\star_{42}\ldots\star_{48}$}/{blue, draw opacity=0}/{purple}};

\draw[dotted] (wi+2) -- node[lbl,above=2pt,font=\scriptsize] {$\star_{i+3}\ldots\star_{21}$} (cd1-left);
\draw[dotted] (cd1-right) -- node[lbl,above=2pt,font=\scriptsize] {$\star_{29}\ldots\star_{31}$} (cd2-left);
\draw[dotted] (cd2-right) -- node[lbl,above=2pt,font=\scriptsize] {$\star_{39}\ldots\star_{41}$} (cd3-left);

\begin{scope}[xshift=.8cm]
    \pic at (-1.2,0) {vertexgadget={$v_1$}/{$v_2$}/{\scriptsize$\star_1,\ldots,\star_{48}$}};
    \node at (.6,0) {$:=$};
\end{scope}

\end{tikzpicture}

\hrule

\begin{tikzpicture}[
    >=Stealth,
    vertex/.style={circle,draw,inner sep=1.5pt},
    lbl/.style={draw=none,fill=none,inner sep=1pt},
    edgelabel/.style={midway, fill=white, inner sep=1pt, text=black}
]

\tikzset{
    pics/vertexgadget/.style args={#1/#2/#3}{
      code={
        \draw (0,0) -- (.375,.65) -- (1.125,.65) -- (1.5,0) -- (1.125,-.65) -- (.375,-.65) -- cycle;
        \draw (0.25,0) -- (1.25,0);
        \node at (.75,0.3) {#1};
        \node[draw,circle,inner sep=1pt] at (.75,-0.3) {#2};
        \node at (.85,-.85) {#3};

        \coordinate (-left)  at (0,0);
        \coordinate (-right) at (2,0);
      }
    },
    pics/colorgadget/.style args={#1/#2/#3/#4/#5}{
      code={
        \draw (0,0) -- (.75,.75) -- (1.5,0) -- (.75,-.75) -- cycle;
        \draw (0.25,0) -- (1.25,0);
        \node[draw=#4,circle,inner sep=1pt] at (.75,0.3) {#1};
        \node[draw=#5,circle,inner sep=.8pt] at (.75,-0.3) {#2};
        \node at (.75,-.95) {#3};
        \coordinate (-left)  at (0,0);
        \coordinate (-right) at (1.5,0);
      }
    }
}

\node[vertex,label=below:$w_1$] (w1) at (2,0) {};
\node[vertex,label=below:$w_{i-1}$] (wi-1) at (3.5,0) {};
\node[vertex,label=below:$w_i$] (wi) at (5,0) {};
\node[vertex,label=below:$w_{i+1}$] (wi+1) at (6,0) {};
\node[vertex,label=below:$w_{i+2}$] (wi+2) at (7.5,0) {};
\node[vertex] (v1) at (4.25,1.3) {};
\node[vertex,label=above:$v_2$] (v2) at (6.75,1.3) {};

\draw[dotted] (w1) -- node[lbl,above=2pt,font=\scriptsize] {$\star_{1}\ldots\star_{i-2}$} (wi-1);

\draw[<-, red] (wi-1) -- node[edgelabel] {$c_1$} (wi);

\draw[<-, yellow!95!black, opacity=0.1] (wi-1) -- (v1);
\draw[->, blue, opacity=0.1] (v1) -- (wi);

\draw[->] (wi) -- (wi+1);

\draw[<-, orange, opacity=0.1] (wi+1) -- (wi+2);

\draw[->, green!60!black] (wi+1) -- node[edgelabel] {$d_2$} (v2);
\draw[<-, purple] (v2) -- node[edgelabel] {$d_3$} (wi+2);

\pic (cd1) at (8.75,0) {colorgadget={$c_1$}/{$d_1$}/{\scriptsize$\star_{22}\ldots\star_{28}$}/{red, draw opacity=0}/{orange}};
\pic (cd2) at (11.45,0) {colorgadget={$c_2$}/{$d_2$}/{\scriptsize$\star_{32}\ldots\star_{38}$}/{yellow!95!black}/{green, draw opacity=0}};
\pic (cd3) at (14.1,0) {colorgadget={$c_3$}/{$d_3$}/{\scriptsize$\star_{42}\ldots\star_{48}$}/{blue}/{purple, draw opacity=0}};

\draw[dotted] (wi+2) -- node[lbl,above=2pt,font=\scriptsize] {$\star_{i+3}\ldots\star_{21}$} (cd1-left);
\draw[dotted] (cd1-right) -- node[lbl,above=2pt,font=\scriptsize] {$\star_{29}\ldots\star_{31}$} (cd2-left);
\draw[dotted] (cd2-right) -- node[lbl,above=2pt,font=\scriptsize] {$\star_{39}\ldots\star_{41}$} (cd3-left);

\begin{scope}[xshift=.8cm]
    \pic at (-1.2,0) {vertexgadget={$v_1$}/{$v_2$}/{\scriptsize$\star_1,\ldots,\star_{48}$}};
    \node at (.6,0) {$:=$};
\end{scope}

\end{tikzpicture}

\end{center}
\caption{$(\{v_1,v_2\},\star)$-absorber with $\star\in\arcset^{48}$, $(\star_{i-1},~\star_i,~\star_{i+1},~\star_{i+2})=(\leftarrow~,~\rightarrow~,~\rightarrow~,~\leftarrow)$ for some $3\leq i\leq 7$, and $v_1$-absorbing and $v_2$-absorbing paths shown. We use the short-hand notation for this gadget given on the left side of the figure in later constructions.}
\label{fig:ordvertexgadget}
\end{figure}

For the second construction, we have that $(\star_2,\ldots, \star_8)$ is alternating. As with the first construction, we find the first half of the gadget pictured in \Cref{fig:altvertexgadget} using two applications of \Cref{lem:shortpaths} (this half is rainbow), and then we use some color gadgets to account for the different colors in the $v_1$- and $v_2$-absorbing paths.

Let $\C_1=\C^{\star_3}(v_1)\cap \C'$ and $\C_3=\C^{\star_{5}}(v_2)\cap\C'$. By \Cref{def:pseudo} (\ref{pseudo:colordiversity}) we have $|\C_j|\geq \alpha n/2-\nu n\geq \alpha n/4$ for $j\in\{1,3\}$. Let $\C_j'\sbeq \C_j$ be disjoint such that $|\C_j|=\alpha n/8$ for $j\in\{1,3\}$. Let $S_1= N^{\star_{3}}_{\C_1'}(v_1)\cap V'$ and $S_3=N^{\star_{5}}_{\C_3'}(v_2)\cap V'$, so that $|S_j|\geq \alpha n/8-\nu n\geq \alpha n/16\gg 4\ep n$. Thus we can choose $S_j'\sbeq S_j$ to be disjoint and each of size $|S_j'|=2\ep n$ for $j\in\{1,3\}$; we also choose $S_2'\sbeq V'$ to be of size $|S_2'|=2\ep n$ so that the sets $S_1',S_2',S_3'$ are pairwise disjoint. 

Apply \Cref{lem:shortpaths} to find a rainbow path $w_1 w_2 w_3$ through $S_1',S_2',S_3'$ with orientation $(\star_4,\star_5)$, with $w_j\in S_j'$ for $j \in [3]$, and colors in $\C' \setminus (\C_1 \cup \C_3)$. Let $\D\sbeq \C'$ be the color set of the rainbow path path $v_1 w_1 w_2 w_3 v_2$.

Let $\D_2=\C^{-\star_2}(v_1)\cap(\C'\setminus \D)$, $\D_3=\C^{\star_6}(w_3)\cap (\C'\setminus \D)$, $\D_4=\C^{-\star_4}(v_2)\cap (\C'\setminus \D)$, and $\D_5=\C^{\star_8}(w_1)\cap (\C'\setminus\D)$. By \Cref{def:pseudo} (\ref{pseudo:colordiversity}) we have $|\D_j|\geq \alpha n/2-\nu n-4\geq \alpha n/4$. Let $\D_j'\subseteq \D_j$ be chosen to be disjoint and each of size $|\D_j'|=\alpha n/16$ for $j\in\{2,3,4,5\}$. 

Now let $T_{2}=N^{-\star_2}_{\D_2'}(v_1)\cap V'$, $T_3=N^{\star_6}_{\D_3'}(w_3)\cap V'$, $T_4=N^{-\star_4}_{\D_4'}(v_2)\cap V'$, $T_5=N^{\star_8}_{\D_5'}(w_1)\cap V'$, which each have size at least $\alpha n/16-\nu n\geq\alpha n/32\gg 8\ep n$. Thus we can choose $T_j'\subseteq T_j$ to be disjoint and each of size $|T_j'|=2\ep n$ for $j\in\{2,3,4,5\}$; we also choose $T_1'\subseteq V'$ to be of size $|T_1'|=2\ep n$ so that the sets $T_1',\ldots,T_5'$ are pairwise disjoint.

We apply \Cref{lem:shortpaths} to find a rainbow path $u_1\dots u_5$ through $T_1',\ldots, T_5'$ with orientation $(\star_1,\star_2,\star_3,\star_8)$ with $u_j\in T_j'$ for $j \in [5]$ and color set $\D' \subseteq \C' \setminus (\D \cup \D_2 \cup \D_3 \cup \D_4 \cup \D_5)$. Let $c_1 = \C((u_1v_1)^{\star_2})$, $c_2 = \C((v_1 w_1)^{\star_3})$, $c_3 = \C((w_3u_3)^{\star_6})$, $c_4 = \C((u_4u_5)^{\star_8})$, $d_1 = \C((u_2u_3)^{\star_2})$, $d_2 = \C((u_4v_2)^{\star_4})$, $d_3 = \C((v_2w_3)^{\star_5})$, $d_4 = \C((w_1u_5)^{\star_8})$. 

For $j \in [4]$, define $\star_j' = (\star_{10j+2,\ldots, \star_{10j+8}})$. Since $D$ is $2\nu$-color-gadget-pseudorandom, we may successively find vertex-disjoint and color-disjoint $(\{c_j,d_j\},\star_j')$-absorbers $(A_j,s_j,t_j)$ for $j\in [4]$ with internal vertices in $V' \setminus \{v_1, v_2, w_1, w_2, w_3, u_1, \dots, u_5\}$ and internal colors in $\C' \setminus (\D \cup \D' \cup \{c_1, c_2, c_3, c_4, d_1, d_2, d_3, d_4\})$.

Lastly, apply \Cref{lem:connecting} to successively find vertex-disjoint and color-disjoint rainbow paths of length three (with internal vertices in $V'$ and disjoint from the $A_j$s, and colors in $\C'$ and disjoint from the $A_j$s) from $u_5$ to $s_1$ with orientation $(\star_9,\star_{10},\star_{11})$ and from $t_j$ to $s_{j+1}$ with orientation $(\star_{10(i-1)+9},\ldots, \star_{10(i-1)+11})$ for $j\in[3]$, vertex- and color-disjoint from everything else found so far. See \Cref{fig:altvertexgadget}.

The $v_1$-absorbing path arises from following the path $u_1u_2v_1w_1w_2w_3u_3u_4u_5$ for the first eight edges, and activating the $d_i$-absorbing paths for each $A_i$. Similarly, the $v_2$-absorbing path arises from following the path $u_1u_2u_3u_4v_2w_3w_2w_1u_5$ for the first eight edges, and activating the $c_i$-absorbing paths for each $A_i$.
\end{proof}

\begin{figure}
\begin{center}
\begin{adjustbox}{width=\textwidth}
\begin{tikzpicture}[
    >=Stealth,
    vertex/.style={circle,draw,inner sep=1.5pt},
    lbl/.style={draw=none,fill=none,inner sep=1pt},
    edgelabel/.style={midway, fill=white, inner sep=1pt, text=black}
]

\tikzset{
    pics/vertexgadget/.style args={#1/#2/#3}{
      code={
        \draw (0,0) -- (.375,.65) -- (1.125,.65) -- (1.5,0) -- (1.125,-.65) -- (.375,-.65) -- cycle;
        \draw (0.25,0) -- (1.25,0);
        \node at (.75,0.3) {#1};
        \node at (.75,-0.3) {#2};
        \node at (.85,-.85) {#3};
      }
    },
    pics/colorgadget/.style args={#1/#2/#3/#4/#5}{
      code={
        \draw (0,0) -- (.75,.75) -- (1.5,0) -- (.75,-.75) -- cycle;
        \draw (0.25,0) -- (1.25,0);
        \node[draw=#4,circle,inner sep=1pt] at (.75,0.3) {#1};
        \node[draw=#5,circle,inner sep=1pt] at (.75,-0.3) {#2};
        \node at (.75,-.95) {#3};
        \coordinate (-left)  at (0,0);
        \coordinate (-right) at (1.5,0);
      }
    }
}

\pic at (-1.4,0) {vertexgadget={$v_1$}/{$v_2$}/{\scriptsize$\star_1\ldots\star_{48}$}};
\node at (0.4,0) {$:=$};

\node[vertex] (u1) at (1,0) {};
\node[lbl, below = 2pt] at (u1) {$u_1$};

\node[vertex] (u2) at (2,0) {};
\node[lbl, below = 2pt] at (u2) {$u_2$};

\node[vertex] (u3) at (4,0) {};
\node[lbl, below = 2pt] at (u3) {$u_3$};

\node[vertex] (u4) at (6,0) {};
\node[lbl, below = 2pt] at (u4) {$u_4$};

\node[vertex] (u5) at (8,0) {};
\node[lbl, below = 2pt] at (u5) {$u_5$};

\node[vertex] (v1) at (4,3) {};
\node[lbl, above left = 2pt] at (v1) {$v_1$};

\node[vertex] (w3) at (4,1) {};
\node[lbl, above left = 2pt] at (w3) {$w_3$};

\node[vertex] (v2) at (6,1) {};
\node[lbl, above right = 2pt] at (v2) {$v_2$};

\node[vertex] (w2) at (5,2) {};
\node[lbl, above left = 2pt] at (w2) {$w_2$};

\node[vertex] (w1) at (6,3) {};
\node[lbl, above right = 2pt] at (w1) {$w_1$};

\draw[->] (u1) -- (u2);

\draw[->, orange] (u2) -- node[edgelabel] {$d_1$} (u3);
\draw[->] (u4) -- (u3);
\draw[->, brown] (u4) -- node[edgelabel] {$c_4$} (u5);

\draw[->, red] (u2) -- node[edgelabel] {$c_1$} (v1);
\draw[->, green!60!black] (u4) -- node[edgelabel] {$d_2$} (v2);

\draw[->, blue] (w3) -- node[edgelabel] {$c_3$} (u3);
\draw[->, purple] (w3) -- node[edgelabel] {$d_3$} (v2);
\draw[->] (w3) -- (w2);

\draw[->] (w1) -- (w2);
\draw[->, cyan] (w1) -- node[edgelabel] {$d_4$} (u5);
\draw[->, yellow!95!black] (w1) -- node[edgelabel] {$c_2$} (v1);

\pic (cd1) at (9.15,0) {colorgadget={$c_1$}/{$d_1$}/{$\scriptstyle \star_{12}\ldots\star_{18}$}/{red,draw opacity =0}/{orange,draw opacity =0}};
\pic (cd2) at (11.8,0) {colorgadget={$c_2$}/{$d_2$}/{$\scriptstyle \star_{22}\ldots\star_{28}$}/{yellow!95!black,draw opacity =0}/{green,draw opacity =0}};
\pic (cd3) at (14.45,0) {colorgadget={$c_3$}/{$d_3$}/{$\scriptstyle \star_{32}\ldots\star_{38}$}/{blue,draw opacity =0}/{purple,draw opacity =0}};
\pic (cd4) at (17.1,0) {colorgadget={$c_4$}/{$d_4$}/{$\scriptstyle \star_{42}\ldots\star_{48}$}/{brown,draw opacity =0}/{cyan,draw opacity =0}};

\draw[dotted] (u5) -- node[lbl,above=2pt,font=\scriptsize] {$\star_{9}\ldots\star_{11}$} (cd1-left);
\draw[dotted] (cd1-right) -- node[lbl,above=2pt,font=\scriptsize] {$\star_{19}\ldots\star_{21}$} (cd2-left);
\draw[dotted] (cd2-right) -- node[lbl,above=2pt,font=\scriptsize] {$\star_{29}\ldots\star_{31}$} (cd3-left);
\draw[dotted] (cd3-right) -- node[lbl,above=2pt,font=\scriptsize] {$\star_{39}\ldots\star_{41}$} (cd4-left);

\end{tikzpicture}
\end{adjustbox}

\hrule
\vspace{2mm}

\begin{adjustbox}{width=\textwidth}
    \begin{tikzpicture}[
    >=Stealth,
    vertex/.style={circle,draw,inner sep=1.5pt},
    lbl/.style={draw=none,fill=none,inner sep=1pt},
    edgelabel/.style={midway, fill=white, inner sep=1pt, text=black}
]

\tikzset{
    pics/vertexgadget/.style args={#1/#2/#3}{
      code={
        \draw (0,0) -- (.375,.65) -- (1.125,.65) -- (1.5,0) -- (1.125,-.65) -- (.375,-.65) -- cycle;
        \draw (0.25,0) -- (1.25,0);
        \node[draw, circle,inner sep=1pt] at (.75,0.3) {#1};
        \node at (.75,-0.3) {#2};
        \node at (.85,-.85) {#3};
      }
    },
    pics/colorgadget/.style args={#1/#2/#3/#4/#5}{
      code={
        \draw (0,0) -- (.75,.75) -- (1.5,0) -- (.75,-.75) -- cycle;
        \draw (0.25,0) -- (1.25,0);
        \node[draw=#4,circle,inner sep=1pt] at (.75,0.3) {#1};
        \node[draw=#5,circle,inner sep=1pt] at (.75,-0.3) {#2};
        \node at (.75,-.95) {#3};
        \coordinate (-left)  at (0,0);
        \coordinate (-right) at (1.5,0);
      }
    }
}

\pic at (-1.4,0) {vertexgadget={$v_1$}/{$v_2$}/{\scriptsize$\star_1\ldots\star_{48}$}};
\node at (0.4,0) {$:=$};

\node[vertex] (u1) at (1,0) {};
\node[lbl, below = 2pt] at (u1) {$u_1$};
\node[vertex] (u2) at (2,0) {};
\node[lbl, below = 2pt] at (u2) {$u_2$};
\node[vertex] (u3) at (4,0) {};
\node[lbl, below = 2pt] at (u3) {$u_3$};
\node[vertex] (u4) at (6,0) {};
\node[lbl, below = 2pt] at (u4) {$u_4$};
\node[vertex] (u5) at (8,0) {};
\node[lbl, below = 2pt] at (u5) {$u_5$};

\node[vertex] (v1) at (4,3) {};
\node[lbl, above left = 2pt] at (v1) {$v_1$};
\node[vertex] (w3) at (4,1) {};
\node[lbl, above left = 2pt] at (w3) {$w_3$};
\node[vertex] (v2) at (6,1) {};
\node[lbl, above right = 2pt] at (v2) {};
\node[vertex] (w2) at (5,2) {};
\node[lbl, above left = 2pt] at (w2) {$w_2$};
\node[vertex] (w1) at (6,3) {};
\node[lbl, above right = 2pt] at (w1) {$w_1$};

\draw[->] (u1) -- (u2);

\draw[->, orange, opacity=0.1] (u2) -- (u3);
\draw[->] (u4) -- (u3);
\draw[->, brown] (u4) -- node[edgelabel] {$c_4$} (u5);

\draw[->, red] (u2) -- node[edgelabel] {$c_1$} (v1);
\draw[->, green!60!black, opacity=0.1] (u4) -- (v2);

\draw[->, blue] (w3) -- node[edgelabel] {$c_3$} (u3);
\draw[->, purple, opacity=0.1] (w3) -- (v2);
\draw[->] (w3) -- (w2);

\draw[->] (w1) -- (w2);
\draw[->, cyan, opacity=0.1] (w1) -- (u5);
\draw[->, yellow!95!black] (w1) -- node[edgelabel] {$c_2$} (v1);

\pic (cd1) at (9.15,0) {colorgadget={$c_1$}/{$d_1$}/{$\scriptstyle \star_{12}\ldots\star_{18}$}/{red, draw opacity=0}/{orange}};
\pic (cd2) at (11.8,0) {colorgadget={$c_2$}/{$d_2$}/{$\scriptstyle \star_{22}\ldots\star_{28}$}/{yellow!95!black, draw opacity=0}/{green}};
\pic (cd3) at (14.45,0) {colorgadget={$c_3$}/{$d_3$}/{$\scriptstyle \star_{32}\ldots\star_{38}$}/{blue, draw opacity=0}/{purple}};
\pic (cd4) at (17.1,0) {colorgadget={$c_4$}/{$d_4$}/{$\scriptstyle \star_{42}\ldots\star_{48}$}/{brown, draw opacity=0}/{cyan}};

\draw[dotted] (u5) -- node[lbl,above=2pt,font=\scriptsize] {$\star_{9}\ldots\star_{11}$} (cd1-left);
\draw[dotted] (cd1-right) -- node[lbl,above=2pt,font=\scriptsize] {$\star_{19}\ldots\star_{21}$} (cd2-left);
\draw[dotted] (cd2-right) -- node[lbl,above=2pt,font=\scriptsize] {$\star_{29}\ldots\star_{31}$} (cd3-left);
\draw[dotted] (cd3-right) -- node[lbl,above=2pt,font=\scriptsize] {$\star_{39}\ldots\star_{41}$} (cd4-left);

\end{tikzpicture}
\end{adjustbox}

\hrule
\vspace{2mm}

\begin{adjustbox}{width=\textwidth}
    \begin{tikzpicture}[
    >=Stealth,
    vertex/.style={circle,draw,inner sep=1.5pt},
    lbl/.style={draw=none,fill=none,inner sep=1pt},
    edgelabel/.style={midway, fill=white, inner sep=1pt, text=black}
]

\tikzset{
    pics/vertexgadget/.style args={#1/#2/#3}{
      code={
        \draw (0,0) -- (.375,.65) -- (1.125,.65) -- (1.5,0) -- (1.125,-.65) -- (.375,-.65) -- cycle;
        \draw (0.25,0) -- (1.25,0);
        \node at (.75,0.3) {#1};
        \node[draw,circle,inner sep=1pt] at (.75,-0.3) {#2};
        \node at (.85,-.85) {#3};
      }
    },
    pics/colorgadget/.style args={#1/#2/#3/#4/#5}{
      code={
        \draw (0,0) -- (.75,.75) -- (1.5,0) -- (.75,-.75) -- cycle;
        \draw (0.25,0) -- (1.25,0);
        \node[draw=#4,circle,inner sep=1pt] at (.75,0.3) {#1};
        \node[draw=#5,circle,inner sep=1pt] at (.75,-0.3) {#2};
        \node at (.75,-.95) {#3};
        \coordinate (-left)  at (0,0);
        \coordinate (-right) at (1.5,0);
      }
    }
}

\pic at (-1.4,0) {vertexgadget={$v_1$}/{$v_2$}/{\scriptsize$\star_1\ldots\star_{48}$}};
\node at (0.4,0) {$:=$};

\node[vertex] (u1) at (1,0) {};
\node[lbl, below = 2pt] at (u1) {$u_1$};
\node[vertex] (u2) at (2,0) {};
\node[lbl, below = 2pt] at (u2) {$u_2$};
\node[vertex] (u3) at (4,0) {};
\node[lbl, below = 2pt] at (u3) {$u_3$};
\node[vertex] (u4) at (6,0) {};
\node[lbl, below = 2pt] at (u4) {$u_4$};
\node[vertex] (u5) at (8,0) {};
\node[lbl, below = 2pt] at (u5) {$u_5$};

\node[vertex] (v1) at (4,3) {};
\node[lbl, above left = 2pt] at (v1) {};
\node[vertex] (w3) at (4,1) {};
\node[lbl, above left = 2pt] at (w3) {$w_3$};
\node[vertex] (v2) at (6,1) {};
\node[lbl, above right = 2pt] at (v2) {$v_2$};
\node[vertex] (w2) at (5,2) {};
\node[lbl, above left = 2pt] at (w2) {$w_2$};
\node[vertex] (w1) at (6,3) {};
\node[lbl, above right = 2pt] at (w1) {$w_1$};

\draw[->] (u1) -- (u2);

\draw[->, orange] (u2) -- node[edgelabel] {$d_1$} (u3);
\draw[->] (u4) -- (u3);
\draw[->, brown, opacity=0.1] (u4) -- (u5);

\draw[->, red, opacity=0.1] (u2) -- (v1);
\draw[->, green!60!black] (u4) -- node[edgelabel] {$d_2$} (v2);

\draw[->, blue, opacity=0.1] (w3) -- (u3);
\draw[->, purple] (w3) -- node[edgelabel] {$d_3$} (v2);
\draw[->] (w3) -- (w2);

\draw[->] (w1) -- (w2);
\draw[->, cyan] (w1) -- node[edgelabel] {$d_4$} (u5);
\draw[->, yellow!95!black, opacity=0.1] (w1) -- (v1);

\pic (cd1) at (9.15,0) {colorgadget={$c_1$}/{$d_1$}/{$\scriptstyle \star_{12}\ldots\star_{18}$}/{red}/{orange, draw opacity=0}};
\pic (cd2) at (11.8,0) {colorgadget={$c_2$}/{$d_2$}/{$\scriptstyle \star_{22}\ldots\star_{28}$}/{yellow!95!black}/{green, draw opacity=0}};
\pic (cd3) at (14.45,0) {colorgadget={$c_3$}/{$d_3$}/{$\scriptstyle \star_{32}\ldots\star_{38}$}/{blue}/{purple, draw opacity=0}};
\pic (cd4) at (17.1,0) {colorgadget={$c_4$}/{$d_4$}/{$\scriptstyle \star_{42}\ldots\star_{48}$}/{brown}/{cyan, draw opacity=0}};

\draw[dotted] (u5) -- node[lbl,above=2pt,font=\scriptsize] {$\star_{9}\ldots\star_{11}$} (cd1-left);
\draw[dotted] (cd1-right) -- node[lbl,above=2pt,font=\scriptsize] {$\star_{19}\ldots\star_{21}$} (cd2-left);
\draw[dotted] (cd2-right) -- node[lbl,above=2pt,font=\scriptsize] {$\star_{29}\ldots\star_{31}$} (cd3-left);
\draw[dotted] (cd3-right) -- node[lbl,above=2pt,font=\scriptsize] {$\star_{39}\ldots\star_{41}$} (cd4-left);

\end{tikzpicture}
\end{adjustbox}
\end{center}
\caption{$(\{v_1,v_2\},\star)$-absorber with $\star\in\arcset^{48}$, $\star_1=\rightarrow$, $\star_2=\rightarrow$, $(\star_2,\ldots, \star_8)$ alternating, and $v_1$-absorbing and $v_2$-absorbing paths shown. We use the short-hand notation for this gadget given on the left side of the figure in later constructions.}
\label{fig:altvertexgadget}
\end{figure}

\section{Local absorbers}\label{sec:local_abs}

Recall \Cref{def:absorber} of an $\mathcal{S}$-absorber. 
What we refer to as a `local absorber' is a $\mathcal{S}$-absorber where $|\mathcal{S}| = 40$, that is, the local absorber absorbs exactly one out of a given set of $40$ options. We first construct a `special' local absorber, where each $S_i \in \mathcal{S}$ contains two colors and one vertex, and only exists for particular choices of $S_i$. We then use our gadgets of \Cref{sec:gadgets} to leverage the special local absorbers into local absorbers which exist for any set of $40$ colors or vertices. The proofs for local vertex absorbers and local color absorbers are roughly the same, the difference being whether we use vertex or color gadgets in some steps.

\subsection{Special local absorber}

\begin{lemma}[Special local absorber]\label{lem:specialabsorber}
Let $\frac{1}{n} \ll \ep \ll \nu \ll \alpha \leq 1$.
Let $D$ be an $(n,\alpha,\ep)$-edge-color-pseudorandom digraph, and let $\star \in \arcset^2$.
Then for every $V'\sbeq V$ and $\C'\sbeq \C$ with $|V'|,|\C'|\geq (1-\nu)n$, there exist distinct vertices $v_1,\ldots, v_{40} \in V'$ and distinct colors $c_1,\ldots, c_{40}, d_1,\ldots, d_{40} \in \C'$ such that there exists a $(\{\{v_i,c_i,d_i\}\}_{i=1}^{40},\star)$-absorber in $D$ with internal vertices in $V'$ and internal colors in $\C'$.
\end{lemma}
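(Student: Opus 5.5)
The plan is to observe that, since $\star\in\arcset^2$, the required absorber can be taken to be a very simple object: a ``double star''. We look for two vertices $s,t\in V'$ and forty further vertices $v_1,\dots,v_{40}\in V'\setminus\{s,t\}$ such that $(sv_i)^{\star_1}$ and $(v_it)^{\star_2}$ are edges of $D$ for every $i$. We then set $c_i=\C((sv_i)^{\star_1})$ and $d_i=\C((v_it)^{\star_2})$, and require that these $80$ colors are distinct and lie in $\C'$. Then $(A,s,t)$ with $A$ the union of these $80$ edges is a $(\{\{v_i,c_i,d_i\}\}_{i=1}^{40},\star)$-absorber: the $\{v_i,c_i,d_i\}$-absorbing path is $s v_i t$, which has length $2$ and orientation $\star$ and is rainbow. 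Its vertex set is $\{s,t\}\cup\{v_i\}$ and its color set is exactly $\{c_i,d_i\}$. The internal vertices are $s,t\in V'$, and there are no internal colors. So everything reduces to finding such a configuration using \Cref{def:pseudo}.

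First I fix any $s\in V'$. By \Cref{def:pseudo} (\ref{pseudo:colordiversity}), $|\C^{\star_1}(s)\cap\C'|\geq \alpha n/2-\nu n$. I pick one $\star_1$-neighbor of $s$ for each such color. At most $\nu n+1$ of these neighbors lie outside $V'\setminus\{s\}$, so I obtain a set $U\subseteq V'\setminus\{s\}$ with $|U|\geq \alpha n/4$ whose edges from $s$ all have distinct colors in $\C'$. Next, for each $u\in U$, the same property (\ref{pseudo:colordiversity}) applied to $u$ and $\star_2$ gives at least $\alpha n/2-\nu n$ distinct colors in $\C'$ on $\star_2$-edges at $u$. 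Discarding endpoints outside $V'$ or equal to $s$, each $u$ has at least $\alpha n/8$ edges $(uw)^{\star_2}$ with $w\in V'\setminus\{s\}$ and color in $\C'$.

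Double counting these $\geq \alpha^2n^2/32$ edges over the at most $n$ possible endpoints $w$ gives a vertex $t\in V'\setminus\{s\}$ that receives at least $\alpha^2 n/32$ of them. Call the corresponding set of vertices $U_t\subseteq U$, and remove $t$ from $U_t$ if present. By \Cref{def:pseudo} (\ref{pseudo:maxdeg}), each color appears on at most $n/\log n$ of the edges between $t$ and $U_t$. Hence these edges carry at least $(\alpha^2/64)\log n\geq 1000$ distinct colors once $n$ is large.

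Finally I select $v_1,\dots,v_{40}\in U_t$ greedily. At each step I choose a $u\in U_t$ such that the color $d$ of $(ut)^{\star_2}$ satisfies three conditions: it differs from all previously chosen $c_j,d_j$; it differs from the color $c$ of $(su)^{\star_1}$; and it is not the $s$-color of any previously chosen vertex. I also require that $c$ avoids all previously chosen $d_j$. Since the $s$-colors on $U$ are pairwise distinct, each previously used color rules out at most one candidate vertex through the $c$-constraint. On the $t$-side, each used color rules out one color class among the $\geq 1000$ classes. The constraint $c\neq d$ rules out a vertex only within its own class. So after fewer than $40$ steps only $O(1)$ color classes and $O(1)$ individual vertices are excluded, and a valid choice remains.

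The only genuinely delicate point is this final bookkeeping, namely guaranteeing that all $80$ colors are distinct. This needs many distinct $t$-colors rather than merely many $t$-neighbors, which is exactly what the max-color-degree condition (\ref{pseudo:maxdeg}) supplies. Everything else is direct from \Cref{def:pseudo}.
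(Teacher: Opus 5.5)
Your route differs from the paper's and works in outline, but the closing count has a hole. Write $c_u=\C((su)^{\star_1})$ and $d_u=\C((ut)^{\star_2})$. Your tally ``only $O(1)$ color classes and $O(1)$ individual vertices are excluded'' leaves out the effect of the constraint $c_u\neq d_u$. That constraint can remove one vertex from \emph{every} $t$-color class. Nothing you prove prevents the classes from being singletons, since (\ref{pseudo:maxdeg}) only bounds class sizes from above. Concretely, every $u\in U_t$ could have $c_u=d_u$, with these colors pairwise distinct over $u$. This local configuration is compatible with every condition of \Cref{def:pseudo}. Your choice of $t$ as any vertex receiving at least $\alpha^2n/32$ selected edges does not rule out such a $t$, and then not even $v_1$ can be chosen.

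The repair is one line. In your second step, for each $u\in U$ use only colors in $(\C^{\star_2}(u)\cap\C')\setminus\{c_u\}$. This costs one color, so all your bounds survive, and now $d_u\neq c_u$ for every $u\in U_t$. At step $i\leq 40$, at most $78$ of the at least $1000$ nonempty $t$-color classes are forbidden. At most $39$ vertices are individually forbidden: those with $c_u\in\{d_j\}$, using that the $s$-colors on $U$ are distinct (earlier $v_j$ lie in forbidden classes). So a valid $u$ remains.

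With that fix, the comparison is as follows. The paper applies \Cref{lem:supersat} to $D[V']$ to get $\theta n^{42}$ copies of the oriented $K_{2,40}$. It then shows that the bad copies are far fewer than $\theta n^{42}$: those using a color outside $\C'$ are controlled by (\ref{pseudo:avgedges}), and those repeating a color by (\ref{pseudo:avgedges}) for non-incident pairs and (\ref{pseudo:maxdeg}) for incident pairs. You instead build the double star directly. Distinctness on the $s$-side is forced by selecting one neighbor per color via (\ref{pseudo:colordiversity}), (\ref{pseudo:maxdeg}) supplies many distinct colors on the $t$-side, and a greedy choice finishes.

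Your argument is more elementary: it uses no supersaturation or hypergraph K\H{o}v\'ari--S\'os--Tur\'an theorem. It also needs only $\nu$ below a small absolute multiple of $\alpha$, rather than $\nu\ll\theta(\alpha)$. The paper's version buys uniformity with its ``many copies, then discard badly colored ones'' template, the same one used for the color gadget, and avoids any greedy bookkeeping.
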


\begin{proof}
Let $K\in V(D)^{42}=(u_1,u_2,v_1,\ldots, v_{40})$. We say $K$ is a $K_{2,40}$ with orientation $\star$ if $(u_1v_i)^{\star_1}, (v_iu_2)^{\star_2} \in E(D)$ for every $i\in [40]$. We let $D[K]$ denote the subdigraph of $D$ consisting of only these vertices and edges. We say that $K$ is rainbow if $D[K]$ is rainbow. Our goal is to find a rainbow $K_{2,40}$ with orientation $\star$ and colors in $\C'$, since this is a $(\{\{v_i,c_i,d_i\}\}_{i=1}^{40},\star)$-absorber (with no internal colors and internal vertices $u_1$ and $u_2$) where $c_i$ is the color of $u_1v_i$ and $d_i$ is the color if $v_i u_2$; see \Cref{fig:specialasborber}.

Note that a $K_{2,40}$ with orientation $\star$ is a blow-up of a consistently oriented path.
We apply \Cref{lem:supersat} to $D[V']$ to obtain a set $\K \subseteq V(D)^{42}$ of size $|\K| \geq \theta n^{42}$ such that each $K \in \K$ is a $K_{2,40}$ with orientation $\star$ and verrtices in $V'$. Using the pseudorandomness properties, we show that the number of non-rainbow $K \in \K$ and the number of $K \in \K$ which use a color not in $\C'$ is much smaller than $\theta n^{42}$, so there must exist $K \in \K$ which is rainbow with colors in $\C'$, as desired.

We first count the number of $K \in \K$ using a color not in $\C'$. There are $\nu n$ choices for a color not in $\C'$. Given that color, there are at most $10n$ edges of $D$ of that color by \Cref{def:pseudo} (\ref{pseudo:avgedges}). There are $80$ edges of $D[K]$ for which a given edge of that color can play the role. Finally, given that edge, there are at most $n^{40}$ ways to choose the other $40$ vertices of $K$. Thus in total, there are at most $\nu n \cdot 10 n \cdot 80 \cdot n^{40} = 800 \nu n^{42} \ll \theta n^{42}$ many $K \in \K$ using a color not in $\C'$, since $\nu \ll \alpha$ and $\theta$ depends on $\alpha$ via \Cref{lem:supersat}.

Second, we count the number of $K \in \K$ which are not rainbow: there must be two edges $e,f \in E(D[K])$ with the same color. We break into two cases:
\begin{enumerate}
\item Assume $e=(xy)^{*_e}$ and $f=(zw)^{*_f}$ are not incident. There are at most $n^{40}$ choices for $\{u_1,u_2,v_1,\ldots, v_{40}\}\setminus \{z,w\}$. Once $x$ and $y$ are fixed, there are at most $10n$ choices for $\{z,w\}$ such that $\C(f)=\C(e)$ by \Cref{def:pseudo} (\ref{pseudo:avgedges}). Since there are at most $80^2$ choices for $e$ and $f$, the total number of such $K\in\K$ is at most $80^2\cdot 10n^{41} \ll \theta n^{42}$.
\item Assume $e=(xw)^{*_e}$ and $f=(wy)^{*_f}$ are incident. There are at most $n^{41}$ choices for $\{u_1,u_2,v_1,\ldots, v_{40}\}\setminus \{y\}$. Once $x$ and $w$ are fixed, there are at most $n/\log n$ choices for $y\in N_{\C(e)}^{*_f}(w)$ so that $\C(f)=\C(e)$ by \Cref{def:pseudo} (\ref{pseudo:maxdeg}) on vertex $w$. Since there are at most $80^2$ choices for $e$ and $f$, the total number of such $K\in\K$ is at most $80^2\cdot n^{42}/\log n \ll \theta n^{42}$. 
\end{enumerate}
\begin{figure}
    \begin{center}
  \begin{tikzpicture}[>=Stealth, line width=0.5pt]

\tikzset{
  pics/k240/.style n args={4}{
    code={
      \node[circle, draw, inner sep=1.3pt] (-L) at (-2,0) {};
      \node[circle, draw, inner sep=1.3pt] (-T) at (0,2) {};
      \node[circle, draw, inner sep=1.3pt] (-R) at (2,0) {};
      \node[circle, draw, inner sep=1.3pt] (-B) at (0,-2) {};
      \node[circle, draw, inner sep=1.3pt] (-M) at (0,.9) {};

      \draw[->, red] (-L) -- (-T);
      \draw[->, yellow!95!black] (-L) -- (-M);
      \draw[->, blue] (-L) -- (-B);

      \draw[->, orange] (-R) -- (-T);
      \draw[->, green!60!black] (-R) -- (-M);
      \draw[->, purple] (-R) -- (-B);

      \node[fill=white, inner sep=1pt] at (-1.00,1.12) {$c_1$};
      \node[fill=white, inner sep=1pt] at (-1.00,0.53) {$c_2$};
      \node[fill=white, inner sep=1pt] at (-1.05,-1.05) {$c_{40}$};

      \node[fill=white, inner sep=1pt] at (1.00,1.12) {$d_1$};
      \node[fill=white, inner sep=1pt] at (1.00,0.53) {$d_2$};
      \node[fill=white, inner sep=1pt] at (1.05,-1.05) {$d_{40}$};

      \node at (0,-0.25) {$\vdots$};

      \node[left=4pt]  at (-L) {#1};
      \node[above=4pt] at (-T) {#2};
      \node[right=4pt] at (-R) {#3};
      \node[below=4pt] at (-B) {#4};
      \node[above=3pt] at (-M) {$v_2$};
    }
  }
}

\pic at (0,0) {k240={$u_1$}{$v_1$}{$u_2$}{$v_{40}$}};

\end{tikzpicture}
    \end{center}
    \caption{A $(\{\{v_i,c_i,d_i\}\}_{i=1}^{40},(\rightarrow,\leftarrow))$-absorber. The $\{v_i,c_i,d_i\}$-absorbing path is simply $u_1 v_i u_2$.}
    \label{fig:specialasborber}
\end{figure}
\end{proof}

\subsection{Local vertex absorbers}

\begin{lemma}[Local vertex absorber]\label{lem:localvtxabs}
Let $\frac{1}{n} \ll \ep \ll \nu \ll \alpha \leq 1$.
Let $D$ be an $(n,\alpha,\ep)$-edge-color-pseudorandom and $3\nu$-color-gadget pseudorandom digraph, and let $\star \in \arcset^{2822}$.
Then for every $V' \subseteq V$ and $\C' \subseteq \C$ with $|V'|, |\C'| \geq (1-\nu)n$ and for every $v_1,\dots,v_{40} \in V'$, there exists a $(\{v_1,\dots,v_{40}\},\star)$-absorber in $D$ with internal vertices in $V'$ and internal colors in $\C'$.
\end{lemma}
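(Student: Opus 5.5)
The plan is to start from the special local absorber of \Cref{lem:specialabsorber} and then use gadgets from \Cref{sec:gadgets} to trade its special vertices and colors for the prescribed vertices $v_1,\dots,v_{40}$. The length $2822$ is accounted for as
\[ 2822 = 2 + 40\cdot(3+48) + 78\cdot(3+7), \]
that is, the two edges of the special absorber, forty vertex gadgets, and $78 = 2\cdot 39$ color gadgets, each gadget preceded by a connecting path of length $3$.

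\textbf{Step 1 (special absorber).} Apply \Cref{lem:specialabsorber} with orientation $(\star_1,\star_2)$, working inside $V'\setminus\{v_1,\dots,v_{40}\}$ and $\C'$. This gives a rainbow $K_{2,40}$ with centers $u_1,u_2$, special vertices $x_1,\dots,x_{40}$ and special colors $c_1,\dots,c_{40},d_1,\dots,d_{40}$. Activating it on index $i$ traverses $u_1x_iu_2$, so it uses $x_i,c_i,d_i$ and none of the other $x_j,c_j,d_j$.

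\textbf{Step 2 (vertex swaps).} For each $i\in[40]$, take a connecting path of length $3$ from \Cref{lem:connecting}, followed by a $(\{x_i,v_i\},\star')$-absorber from \Cref{lem:vtxgadget}, where $\star'$ is the appropriate window of $48$ entries of $\star$. Under the global activation $i$, gadget $i$ absorbs $v_i$, because $x_i$ is already on the $K_{2,40}$ path. For $j\neq i$, gadget $j$ absorbs $x_j$, which is unused by the $K_{2,40}$. Hence every $x_j$ is covered exactly once, and among the $v$'s exactly $v_i$ is covered.

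\textbf{Step 3 (color chains).} Next, for each $k\in[39]$, add (again via connecting paths) a $(\{c_k,c_{k+1}\},\star'')$-absorber from \Cref{def:cgpseudorandom}, and likewise a $(\{d_k,d_{k+1}\},\star'')$-absorber. Under activation $i$, gadget $k$ absorbs $c_k$ for $k<i$ and absorbs $c_{k+1}$ for $k\geq i$. These $39$ gadgets then use exactly $\{c_j: j\neq i\}$, and together with $c_i$ on the $K_{2,40}$ every $c_j$ appears exactly once; the same holds for the $d$'s. As a result, all of $c_j,d_j,x_j$ and all gadget and connector vertices and colors are internal, and the $i$-th absorbing path has vertex set $(\text{internal})\cup\{v_i\}$ and no varying colors, as \Cref{def:absorber} requires. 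Every activation follows the same orientation $\star$ and has length $2822$, because each gadget has fixed length regardless of how it is activated.

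\textbf{Step 4 (disjointness).} The pieces are found successively, each forbidding all vertices and colors used so far. Only $O(1)$ vertices and colors are ever used (at most a few thousand), so every application still has at least $(1-\nu)n-O(1)\geq(1-2\nu)n$ available vertices and colors. This is within the hypotheses of \Cref{lem:connecting}, \Cref{lem:vtxgadget} (which needs $2\nu$-color-gadget pseudorandomness at slack $\nu$, hence the $3\nu$ assumption after the $O(1)$ loss), and \Cref{def:cgpseudorandom}.

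The main obstacle I expect is the bookkeeping in Step 3, together with ensuring that gadget colors are specified on deterministic-edge-compatible positions. The chain trick resolves the color accounting, and the pseudorandom properties place no restriction on which colors are specified. The remaining check is that in Step 2 the special vertices $x_i$ do not collide with any $v_j$ or with the internal vertices of the gadgets, which the successive avoidance handles.
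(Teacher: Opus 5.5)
Your proposal is correct and follows the paper's proof essentially step for step. Both use a special rainbow $K_{2,40}$ absorber, forty vertex gadgets that swap its special vertices for $v_1,\dots,v_{40}$, and two chains of $39$ color gadgets on the $c$'s and $d$'s, joined by length-$3$ connectors, with the same count $2822 = 2 + 40\cdot 51 + 78\cdot 10$. The differences are cosmetic: you place the special absorber first rather than after the vertex gadgets, and you explicitly exclude $v_1,\dots,v_{40}$ when choosing the special vertices, which the paper leaves implicit.
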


\begin{proof}
See \Cref{fig:localvtxabs}. Define the following subpatterns of $\star$:
\begin{itemize}
\item $\star_i^1 = (\star_{51(i-1)+1},\ldots,\star_{51(i-1)+48})$ for $i \in [40]$,
\item $\star_i^2 = (\star_{2046+10(i-1)},\ldots, \star_{2052+10(i-1)})$ for $i \in [39]$,
\item $\star_i^3 = (\star_{2436+10(i-1)},\ldots, \star_{2442+10(i-1)})$ for $i \in [39]$,
\item $\star_i^4 = (\star_{51i-2},\star_{51i-1},\star_{51i})$ for $i \in [40]$,
\item $\star_i^5 = (\star_{10i+2033},\star_{10i+2034},\star_{10i+2035})$ for $i \in [39]$,
\item $\star_i^6 = (\star_{10i+2434},\star_{10i+2435},\star_{10i+2436})$ for $i \in [39]$.
\end{itemize}

First we apply \Cref{lem:specialabsorber} to find a $(\{\{c_i,w_i,d_i\}\}_{i=1}^{40},(\star_{2041},\star_{2042}))$-absorber $(A_0,s_0,t_0)$. By \Cref{lem:vtxgadget}, we may successively find vertex-disjoint and color-disjoint $(\{v_i,w_i\},\star_i^1)$-absorbers $(A_i,s_i,t_i)$ for $i\in [40]$ with vertices and colors in $V'$ and $\C'$, and internally vertex and internally color disjoint from $A_0$. Similarly, by \Cref{def:cgpseudorandom}, we may successively find $(\{c_i,c_{i+1}\}, \star_i^2)$-absorbers $(G_i,x_i,y_i)$ and $(\{d_i,d_{i+1}\}, \star_i^3)$-absorbers $(G_i',x_i',y_i')$ for $i\in [39]$ which are vertex-disjoint and color-disjoint from each other, internally vertex- and color-disjoint from the previously found absorbers, and with vertices in $V'$ and colors in $\C'$.

Lastly, we apply \Cref{lem:connecting} to successively find vertex-disjoint and color-disjoint paths of length three (internally vertex-disjoint and color-disjoint from all absorbers found thus far, with vertices and colors in $V'$ and $\C'$) from $t_i$ to $s_{i+1}$ oriented by $\star_i^4$ for $i\in [39]$, from $t_{40}$ to $s_0$ oriented by $\star_{40}^4$, from $t_0$ to $x_1$ oriented by $\star_{1}^5$, from $y_i$ to $x_{i+1}$ oriented by $\star_{i+1}^5$ for $i\in [38]$, from $y_{39}$ to $x_1'$ oriented by $\star_{1}^6$, from $y_i'$ to $x_{i+1}'$ oriented by $\star_{i+1}^6$ for $i \in [38]$.

The $v_i$-absorbing path arises from following the $v_i$-absorbing path in $A_i$, the $w_j$-absorbing path in $A_j$ for $j\neq i$, the $\{c_i,w_i,d_i\}$-absorbing path $A_0$, and lastly, the $c_j$ absorbing path for $j<i$ and the $c_{j+1}$-absorbing path for $j\geq i$ in $G_i$, and similarly the $d_j$ absorbing path for $j<i$ and the $d_{j+1}$-absorbing path for $j \geq i$ in $G_j'$.
\end{proof}

\begin{figure}
\begin{center}
\begin{adjustbox}{width=\textwidth}
   \begin{tikzpicture}[
    >=Stealth,
    vertex/.style={circle,draw,inner sep=1.5pt},
    lbl/.style={draw=none,fill=none,inner sep=1pt}
]

\tikzset{
  pics/vertexgadget/.style args={#1/#2/#3/#4/#5}{
    code={
      \draw (0,0) -- (.375,.65) -- (1.125,.65) -- (1.5,0) -- (1.125,-.65) -- (.375,-.65) -- cycle;
      \draw (0.25,0) -- (1.25,0);
      \node[draw,circle,inner sep=1pt,draw opacity=#4,text opacity=1] at (.75,0.3) {#1};
      \node[draw,circle,inner sep=1pt,draw opacity=#5,text opacity=1] at (.75,-0.3) {#2};
      \node at (.85,-.85) {#3};
      \coordinate (-left) at (0,0);
      \coordinate (-right) at (1.5,0);
    }
  },
  pics/colorgadget/.style args={#1/#2/#3/#4/#5}{
    code={
      \draw (0,0) -- (.75,.75) -- (1.5,0) -- (.75,-.75) -- cycle;
      \draw (0.25,0) -- (1.25,0);
      \node[draw=#4,circle,inner sep=.8pt] at (.75,0.3) {#1};
      \node[draw=#5,circle,inner sep=.8pt] at (.75,-0.3) {#2};
      \node at (.75,-.95) {#3};
      \coordinate (-left) at (0,0);
      \coordinate (-right) at (1.5,0);
    }
  },
  pics/k240/.style n args={2}{
    code={
      \node[circle, draw, inner sep=1.3pt] (-L) at (-1,0) {};
      \node[circle, draw, inner sep=1.3pt] (-T) at (0,1) {};
      \node[circle, draw, inner sep=1.3pt] (-R) at (1,0) {};
      \node[circle, draw, inner sep=1.3pt] (-B) at (0,-1) {};
      \node[circle, draw, inner sep=1.3pt] (-M) at (0,.45) {};

      \draw[red] (-L) -- (-T);
      \draw[yellow!95!black] (-L) -- (-M);
      \draw[blue] (-L) -- (-B);

      \draw[orange] (-R) -- (-T);
      \draw[green!60!black] (-R) -- (-M);
      \draw[purple] (-R) -- (-B);

      \node[fill=white, inner sep=1pt] at (-0.52,0.62) {$c_1$};
      \node[fill=white, inner sep=1pt] at (-0.55,-0.60) {$c_{40}$};

      \node[fill=white, inner sep=1pt] at (0.52,0.62) {$d_1$};
      \node[fill=white, inner sep=1pt] at (0.55,-0.60) {$d_{40}$};

      \node at (0,-0.15) {$\vdots$};

      \node[above=3pt] at (-T) {#1};
      \node[below=3pt] at (-B) {#2};
      \node[below=12pt] at (-B) {$\scriptstyle \star_{2041}\star_{2042}$};
    }
  }
}
\tikzset{
  pics/localabsorber/.style n args={5}{
    code={
      \draw (0,0) rectangle (1,2.5);

      \draw (0,2) -- (1,2);
      \draw (0,1.5) -- (1,1.5);
      \draw (0,1) -- (1,1);
      \draw (0,.5) -- (1,.5);

      \node at (0.5,2.25) {#1};
      \node at (0.5,1.75) {#2};
      \node at (0.5,1.35) {#3};
      \node at (0.5,.75) {#4};
      \node at (0.5,0.25) {#5};
    }
  }
}
\pic (c140) at (-1,-2.75) {localabsorber={$v_1$}{$v_2$}{$\vdots$}{$v_{39}$}{$v_{40}$}};
\node at (.5,-1.5) {$:=$};

\begin{scope}[xshift = 1 cm]
\pic (vw1) at (0,0) {vertexgadget={$v_1$}/{$w_1$}/{$\scriptstyle \star^1_1$}/{0}/{0}};
\pic (vw2) at (2.5,0) {vertexgadget={$v_2$}/{$w_2$}/{$\scriptstyle \star^1_2$}/{0}/{0}};
\pic (vw40) at (5.5,0) {vertexgadget={$v_{40}$}/{$w_{40}$}/{$\scriptstyle \star^1_{40}$}/{0}/{0}};

\pic (w140) at (9.5,0) {k240={$w_1$}{$w_{40}$}};

\draw[dotted] (vw1-right) -- node[lbl,above=2pt,font=\scriptsize] {$\star^4_1$} (vw2-left);
\draw[dotted] (vw2-right) -- node[lbl,above=2pt,font=\scriptsize] {$\star_{100}\ldots\star_{1989}$} (vw40-left);
\draw[dotted] (vw40-right) -- node[lbl,above=2pt,font=\scriptsize] {$\star^4_{40}$} (w140-L);
\draw[dotted] (w140-R) -- node[lbl,above=2pt,font=\scriptsize] {$\star^5_1$} (12,0);

\pic (c12) at (0.5,-3) {colorgadget={$c_1$}/{$c_2$}/{$\scriptstyle \star^2_1$}/{red,draw opacity =0}/{yellow!95!black,draw opacity =0}};
\pic (c23) at (3,-3) {colorgadget={$c_2$}/{$c_3$}/{$\scriptstyle \star^2_2$}/{yellow!95!black,draw opacity =0}/{cyan,draw opacity =0}};
\pic (c3940) at (6,-3) {colorgadget={$c_{39}$}/{$c_{40}$}/{$\scriptstyle \star^2_{39}$}/{brown,draw opacity =0}/{blue,draw opacity =0}};

\pic (d12) at (8.5,-3) {colorgadget={$d_1$}/{$d_2$}/{$\scriptstyle \star^3_1$}/{orange,draw opacity =0}/{green!60!black,draw opacity =0}};
\pic (d23) at (11,-3) {colorgadget={$d_2$}/{$d_3$}/{$\scriptstyle \star^3_2$}/{green!60!black,draw opacity =0}/{pink,draw opacity =0}};
\pic (d3940) at (14,-3) {colorgadget={$d_{39}$}/{$d_{40}$}/{$\scriptstyle \star^3_{39}$}/{teal,draw opacity =0}/{purple,draw opacity =0}};

\draw[dotted] (0,-3) -- (c12-left);
\draw[dotted] (c12-right) -- node[lbl,above=2pt,font=\scriptsize] {$\star^5_2$} (c23-left);
\draw[dotted] (c23-right) -- node[lbl,above=2pt,font=\scriptsize] {$\star_{2063}\ldots\star_{2435}$} (c3940-left);
\draw[dotted] (c3940-right) -- node[lbl,above=2pt,font=\scriptsize] {$\star^6_1$} (d12-left);
\draw[dotted] (d12-right) -- node[lbl,above=2pt,font=\scriptsize] {$\star^6_2$} (d23-left);
\draw[dotted] (d23-right) -- node[lbl,above=2pt,font=\scriptsize] {$\star_{2463}\ldots\star_{2815}$} (d3940-left);
\end{scope}
\end{tikzpicture}
\end{adjustbox}

\hrule

\begin{adjustbox}{width=\textwidth}
\begin{tikzpicture}[
    >=Stealth,
    vertex/.style={circle,draw,inner sep=1.5pt},
    lbl/.style={draw=none,fill=none,inner sep=1pt}
]

\tikzset{
  pics/vertexgadget/.style args={#1/#2/#3/#4/#5}{
    code={
      \draw (0,0) -- (.375,.65) -- (1.125,.65) -- (1.5,0) -- (1.125,-.65) -- (.375,-.65) -- cycle;
      \draw (0.25,0) -- (1.25,0);
      \node[draw,circle,inner sep=1pt,draw opacity=#4,text opacity=1] at (.75,0.3) {#1};
      \node[draw,circle,inner sep=1pt,draw opacity=#5,text opacity=1] at (.75,-0.3) {#2};
      \node at (.85,-.85) {#3};
      \coordinate (-left) at (0,0);
      \coordinate (-right) at (1.5,0);
    }
  },
  pics/colorgadget/.style args={#1/#2/#3/#4/#5}{
    code={
      \draw (0,0) -- (.75,.75) -- (1.5,0) -- (.75,-.75) -- cycle;
      \draw (0.25,0) -- (1.25,0);
      \node[draw=#4,circle,inner sep=.8pt] at (.75,0.3) {#1};
      \node[draw=#5,circle,inner sep=.8pt] at (.75,-0.3) {#2};
      \node at (.75,-.95) {#3};
      \coordinate (-left) at (0,0);
      \coordinate (-right) at (1.5,0);
    }
  },
  pics/k240/.style n args={2}{
    code={
      \node[circle, draw, inner sep=1.3pt] (-L) at (-1,0) {};
      \node[circle, draw, inner sep=1.3pt] (-T) at (0,1.25) {};
      \node[circle, draw, inner sep=1.3pt] (-R) at (1,0) {};
      \node[circle, draw, inner sep=1.3pt] (-B) at (0,-1) {};
      \node[circle, draw, inner sep=1.3pt] (-M) at (0,.45) {};

      \draw[red,draw opacity = .1] (-L) -- (-T);
      \draw[yellow!95!black] (-L) -- (-M);
      \draw[blue,draw opacity = .1] (-L) -- (-B);

      \draw[orange,draw opacity = .1] (-R) -- (-T);
      \draw[green!60!black] (-R) -- (-M);
      \draw[purple,draw opacity = .1] (-R) -- (-B);

      \node[fill=white, inner sep=1pt] at (-0.52,0.28) {$c_2$};

      \node[fill=white, inner sep=1pt] at (0.52,0.28) {$d_2$};

      \node at (0,-0.15) {$\vdots$};

      \node[above=3pt] at (-T) {};
      \node[below=3pt] at (-B) {};
      \node[above] at (-M) {$w_2$};
      
      \node[below=12pt] at (-B) {$\scriptstyle \star_{2041}\star_{2042}$};
    }
  }
}
\tikzset{
  pics/localabsorber/.style n args={5}{
    code={
      \draw (0,0) rectangle (1,2.5);

      \draw (0,2) -- (1,2);
      \draw (0,1.5) -- (1,1.5);
      \draw (0,1) -- (1,1);
      \draw (0,.5) -- (1,.5);

      \node at (0.5,2.25) {#1};
      \node[circle, draw, inner sep=1pt] at (0.5,1.75) {#2};
      \node at (0.5,1.35) {#3};
      \node at (0.5,.75) {#4};
      \node at (0.5,0.25) {#5};
    }
  }
}
\pic (c140) at (-1,-2.75) {localabsorber={$v_1$}{$v_2$}{$\vdots$}{$v_{39}$}{$v_{40}$}};
\node at (.5,-1.5) {$:=$};

\begin{scope}[xshift = 1 cm]
\pic (vw1) at (0,0)
  {vertexgadget={$v_1$}/{$w_1$}/{$\scriptstyle \star^1_1$}/{0}/{1}};

\pic (vw2) at (2.5,0)
  {vertexgadget={$v_2$}/{$w_2$}/{$\scriptstyle \star^1_2$}/{1}/{0}};

\pic (vw40) at (5.5,0)
  {vertexgadget={$v_{40}$}/{$w_{40}$}/{$\scriptstyle \star^1_{40}$}/{0}/{1}};

\pic (w140) at (9.5,0) {k240={$w_1$}{$w_{40}$}};

\draw[dotted] (vw1-right)
  -- node[lbl,above=2pt,font=\scriptsize] {$\star^4_1$}
  (vw2-left);

\draw[dotted] (vw2-right)
  -- node[lbl,above=2pt,font=\scriptsize] {$\star_{100}\ldots\star_{1989}$}
  (vw40-left);

\draw[dotted] (vw40-right)
  -- node[lbl,above=2pt,font=\scriptsize] {$\star^4_{40}$}
  (w140-L);

\draw[dotted] (w140-R)
  -- node[lbl,above=2pt,font=\scriptsize] {$\star^5_1$}
  (12,0);

\pic (c12) at (0.5,-3)
  {colorgadget={$c_1$}/{$c_2$}/{$\scriptstyle \star^2_1$}/{red}/{yellow!95!black,draw opacity=0}};

\pic (c23) at (3,-3)
  {colorgadget={$c_2$}/{$c_3$}/{$\scriptstyle \star^2_2$}/{yellow!95!black,draw opacity=0}/{cyan}};

\pic (c3940) at (6,-3)
  {colorgadget={$c_{39}$}/{$c_{40}$}/{$\scriptstyle \star^2_{39}$}/{brown,draw opacity=0}/{blue}};

\pic (d12) at (8.5,-3)
  {colorgadget={$d_1$}/{$d_2$}/{$\scriptstyle \star^3_1$}/{orange}/{green!60!black,draw opacity=0}};

\pic (d23) at (11,-3)
  {colorgadget={$d_2$}/{$d_3$}/{$\scriptstyle \star^3_2$}/{green!60!black,draw opacity=0}/{pink}};

\pic (d3940) at (14,-3)
  {colorgadget={$d_{39}$}/{$d_{40}$}/{$\scriptstyle \star^3_{39}$}/{black,draw opacity=0}/{purple}};

\draw[dotted] (0,-3) -- (c12-left);

\draw[dotted] (c12-right)
  -- node[lbl,above=2pt,font=\scriptsize] {$\star^5_2$}
  (c23-left);

\draw[dotted] (c23-right)
  -- node[lbl,above=2pt,font=\scriptsize] {$\star_{2063}\ldots\star_{2435}$}
  (c3940-left);

\draw[dotted] (c3940-right)
  -- node[lbl,above=2pt,font=\scriptsize] {$\star^6_1$}
  (d12-left);

\draw[dotted] (d12-right)
  -- node[lbl,above=2pt,font=\scriptsize] {$\star^6_2$}
  (d23-left);

\draw[dotted] (d23-right)
  -- node[lbl,above=2pt,font=\scriptsize] {$\star_{2463}\ldots\star_{2815}$}
  (d3940-left);
\end{scope}
\end{tikzpicture}

\end{adjustbox}
\end{center}

\caption{$(\{v_1,\ldots,v_{40}\},\star)$-absorber, with $v_2$-absorbing path shown. We use the short-hand notation for this gadget given on the left side of the figure in later constructions.}
\label{fig:localvtxabs}
\end{figure}

\subsection{Local color absorbers}

\begin{lemma}[Local color absorber]\label{lem:localcolorabs}
Let $\frac{1}{n} \ll \ep \ll \nu \ll \alpha \leq 1$.
Let $D$ be an $(n,\alpha,\ep$-edge-color-pseudorandom and $3\nu$-color-gadget pseudorandom digraph, and let $\star \in \arcset^{2781}$.
Then for every $V' \subseteq V$ and $\C' \subseteq \C$ with $|V'|, |\C'| \geq (1-\nu)n$ and for every $c_1,\dots,c_{40} \in \C'$, there exists an $(\{c_1,\dots,c_{40}\},\star)$-absorber in $D$ with internal vertices in $V'$ and internal colors in $\C'$.
\end{lemma}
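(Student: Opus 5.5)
The plan mirrors the proof of \Cref{lem:localvtxabs}, with the roles of vertex and color gadgets exchanged. First apply \Cref{lem:specialabsorber} with orientation $(\star_{401},\star_{402})$ to obtain a $(\{\{w_i,a_i,b_i\}\}_{i=1}^{40},(\star_{401},\star_{402}))$-absorber $(A_0,s_0,t_0)$. Its vertices and colors lie in $V'$ and $\C'\setminus\{c_1,\dots,c_{40}\}$. Then attach three families of gadgets.

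\begin{enumerate}
\item For each $i\in[40]$, a $(\{c_i,a_i\},\star^1_i)$-absorber $(G_i,s_i,t_i)$ given by \Cref{def:cgpseudorandom}. Here $\star^1_i$ is the block of $7$ coordinates $\star_{10(i-1)+1},\dots,\star_{10(i-1)+7}$, and $G_i$ is followed by a connecting path of length $3$.
\item For each $j\in[39]$, a $(\{w_j,w_{j+1}\},\cdot)$-absorber $(H_j,x_j,y_j)$ of length $48$ from \Cref{lem:vtxgadget}.
\item For each $j\in[39]$, a $(\{b_j,b_{j+1}\},\cdot)$-absorber $(G'_j,x'_j,y'_j)$ of length $7$, again from \Cref{def:cgpseudorandom}.
\end{enumerate}

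The gadgets are chained in this order: $G_1,\dots,G_{40}$, then $A_0$, then $H_1,\dots,H_{39}$, then $G'_1,\dots,G'_{39}$. Consecutive pieces are joined by length-$3$ rainbow paths from \Cref{lem:connecting}. The length is $40\cdot 10+2+39\cdot 51+39\cdot 10=2781$, as required, and the orientation pattern $\star$ is cut into consecutive blocks accordingly.

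All pieces are found successively and are internally vertex- and color-disjoint. At each step, the $O(1)$ vertices and colors used so far are forbidden. The colors $c_1,\dots,c_{40}$ must also be kept out of every internal color set. Since only $O(1)$ objects are forbidden, the sets of allowed vertices and colors still have size at least $(1-2\nu)n$. This is why the hypothesis of $3\nu$-color-gadget pseudorandomness suffices: it covers both the direct uses of \Cref{def:cgpseudorandom} and the requirement of \Cref{lem:vtxgadget} that $D$ be $2\nu$-color-gadget pseudorandom with slightly enlarged forbidden sets.

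To absorb $c_i$, activate the gadgets as follows.
\begin{itemize}
\item In $G_i$, take the $c_i$-absorbing path. For $j\neq i$, take the $a_j$-absorbing path in $G_j$. Thus exactly $a_i$ is left unused.
\item In $A_0$, take the $\{w_i,a_i,b_i\}$-absorbing path. This uses $a_i$, and leaves unused the vertices $w_j$ and the colors $b_j$ for $j\neq i$.
\item In $H_j$, absorb $w_j$ if $j<i$ and $w_{j+1}$ if $j\ge i$. Every $w_j$ with $j\neq i$ is then used exactly once.
\item In $G'_j$, absorb $b_j$ if $j<i$ and $b_{j+1}$ if $j\ge i$. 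Every $b_j$ with $j\neq i$ is then used exactly once.
\end{itemize}

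It remains to check the defining conditions of an absorber. Every $w_j$, $a_j$ and $b_j$ is internal. Every absorbing path has the same length and orientation $\star$. The $c_i$-absorbing path uses all internal vertices and colors plus $c_i$ and no other $c_j$. Since every gadget path and every connecting path is rainbow, and the pieces are pairwise disjoint, the union is rainbow.

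The main obstacle is bookkeeping rather than any new idea. One must index the orientation blocks correctly, and must ensure that the special absorber's colors $a_i,b_i$ and vertices $w_i$ avoid $c_1,\dots,c_{40}$ and everything found earlier. This is handled by shrinking $\C'$ and $V'$ by $O(1)$ elements before each application of a lemma.
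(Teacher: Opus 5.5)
Your proposal is correct and is essentially the paper's construction: a special absorber on the triples $\{w_i,a_i,b_i\}$, forty $\{c_i,a_i\}$ color gadgets in front of it, and two chains of 39 swapping gadgets activated by the $j<i$ / $j\ge i$ rule. The only difference is that the paper places the $\{b_j,b_{j+1}\}$ color gadgets before the $\{w_j,w_{j+1}\}$ vertex gadgets, which is immaterial since the total length is still $2781$. One small bookkeeping point: to invoke \Cref{lem:vtxgadget} under only $3\nu$-color-gadget pseudorandomness, use that the allowed sets have size at least $(1-\nu)n-O(1)\ge(1-\tfrac32\nu)n$ and apply the lemma with parameter $\tfrac32\nu$. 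The weaker bound $(1-2\nu)n$ that you state would instead require $4\nu$-color-gadget pseudorandomness.
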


\begin{proof}
See \Cref{fig:localcolabs}. Define the following subpatterns of $\star$:
\begin{itemize}
\item $\star_i^1 = (\star_{10(i-1)+1},\ldots,\star_{10(i-1)+7})$ for $i \in [40]$,
\item $\star_i^2 = (\star_{406+10(i-1)},\ldots, \star_{412+10(i-1)})$ for $i \in [39]$,
\item $\star_i^3 = (\star_{796+51(i-1)},\ldots, \star_{843+51(i-1)})$ for $i \in [39]$,
\item $\star_i^4 = (\star_{10i-2},\star_{10i-1},\star_{10i})$ for $i \in [39]$,
\item $\star_i^5 = (\star_{10i+393},\star_{10i+394},\star_{10i+395})$ for $i \in [39]$,
\item $\star_i^6 = (\star_{51(i-1)+793},\star_{51(i-1)+794},\star_{51(i-1)+795})$ for $i \in [39]$.
\end{itemize}

First we apply \Cref{lem:specialabsorber} to find a $(\{\{d_i,w_i,f_i\}\},(\star_{401},\star_{402}))$-absorber $(A_0,s_0,t_0)$. By \Cref{def:cgpseudorandom}, we may successively find vertex- and color-disjoint $(\{c_i,d_i\},\star_i^1)$-absorbers $(A_i,s_i,t_i)$ for $i\in [40]$ with vertices in $V'$ and colors in $\C'$, and internally vertex- and internally color-disjoint from $A_0$. Similarly, by \Cref{def:cgpseudorandom} and \Cref{lem:vtxgadget}, we may successively find $(\{f_i,f_{i+1}\}, \star_i^2)$-absorbers $(G_i,x_i,y_i)$ and $(\{w_i,w_{i+1}\}, \star_i^3)$-absorbers $(G_i',x_i',y_i')$ for $i\in [39]$ which are vertex- and color-disjoint from each other, internally vertex- and color-disjoint from the previously found absorbers, and with vertices in $V'$ and colors in $\C'$.

Lastly, we apply \Cref{lem:connecting} to successively find vertex-disjoint and color-disjoint paths of length three (internally vertex-disjoint and color-disjoint from all absorbers found thus far, with vertices and colors in $V'$ and $\C'$) from $t_i$ to $s_{i+1}$ oriented by $\star_i^4$ for $i\in\{1,\ldots,39\}$, from $t_{40}$ to $s_0$ oriented by $\star^4_{40}$, from $t_0$ to $x_1$ oriented by $\star^5_1$, from $y_i$ to $x_{i+1}$ oriented by $\star_{i+1}^5$ for $i\in [38]$, from $y_{39}$ to $x_1'$ oriented by $\star^6_1$, from $y_i'$ to $x_{i+1}'$ oriented by $\star_{i+1}^6$.

The $c_i$-absorbing path arises from following the $c_i$-absorbing path in $A_i$, and the $d_j$-absorbing path in $A_j$, for $j\neq i$. We then follow the $\{d_i,w_i,f_i\}$-absorbing path in $A_0$. Lastly, we follow the $f_j$-absorbing path for $j<i$ and the $f_{j+1}$-absorbing path for $j\geq i$ in $G_i$, and similarly the $w_j$ absorbing path for $j<i$ and the $w_{j+1}$-absorbing path for $j \geq i$ in $G_j'$.
\end{proof}

\begin{figure}
\begin{center}
\begin{adjustbox}{width=\textwidth}
\begin{tikzpicture}[
    >=Stealth,
    vertex/.style={circle,draw,inner sep=1.5pt},
    lbl/.style={draw=none,fill=none,inner sep=1pt}
]

\tikzset{
  pics/vertexgadget/.style n args={5}{
    code={
      \draw (0,0) -- (.375,.65) -- (1.125,.65) -- (1.5,0) -- (1.125,-.65) -- (.375,-.65) -- cycle;
      \draw (0.25,0) -- (1.25,0);
      \node[draw,circle,inner sep=1pt,draw opacity=#4,text opacity=1] at (.75,0.3) {#1};
      \node[draw,circle,inner sep=1pt,draw opacity=#5,text opacity=1] at (.75,-0.3) {#2};
      \node at (.85,-.85) {#3};

      \coordinate (-left)  at (0,0);
      \coordinate (-right) at (1.5,0);
    }
  }}
\tikzset{
  pics/colorgadget/.style n args={5}{
    code={
      \draw (0,0) -- (.75,.75) -- (1.5,0) -- (.75,-.75) -- cycle;
      \draw (0.25,0) -- (1.25,0);
      \node[#4,circle,inner sep=.8pt] at (.75,0.3) {#1};
      \node[#5,circle,inner sep=.8pt] at (.75,-0.3) {#2};
      \node at (.75,-.95) {#3};

      \coordinate (-left)  at (0,0);
      \coordinate (-right) at (1.5,0);
    }
  }}
\tikzset{
  pics/k240/.style n args={2}{
    code={
      \node[circle, draw, inner sep=1.3pt] (-L) at (-1,0) {};
      \node[circle, draw, inner sep=1.3pt] (-T) at (0,1) {};
      \node[circle, draw, inner sep=1.3pt] (-R) at (1,0) {};
      \node[circle, draw, inner sep=1.3pt] (-B) at (0,-1) {};
      \node[circle, draw, inner sep=1.3pt] (-M) at (0,.45) {};

      \draw[orange] (-L) -- (-T);
      \draw[green!60!black] (-L) -- (-M);
      \draw[purple] (-L) -- (-B);

      \draw[cyan] (-R) -- (-T);
      \draw[brown] (-R) -- (-M);
      \draw[pink] (-R) -- (-B);

      \node[fill=white, inner sep=1pt] at (-0.52,0.62) {$d_1$};
      \node[fill=white, inner sep=1pt] at (-0.55,-0.60) {$d_{40}$};

      \node[fill=white, inner sep=1pt] at (0.52,0.62) {$f_1$};
      \node[fill=white, inner sep=1pt] at (0.55,-0.60) {$f_{40}$};

      \node at (0,-0.15) {$\vdots$};

      \node[above=3pt] at (-T) {#1};
      \node[below=3pt] at (-B) {#2};      
      \node[below=12pt] at (-B) {$\scriptstyle \star^5_0$};
    }
  }
}
\tikzset{
  pics/localabsorber/.style n args={5}{
    code={
      \draw (0,0) rectangle (1,2.5);

      \draw (0,2) -- (1,2);
      \draw (0,1.5) -- (1,1.5);
      \draw (0,1) -- (1,1);
      \draw (0,.5) -- (1,.5);

      \node at (0.5,2.25) {#1};
      \node at (0.5,1.75) {#2};
      \node at (0.5,1.35) {#3};
      \node at (0.5,.75) {#4};
      \node at (0.5,0.25) {#5};
    }
  }
}
\pic (c140) at (-1,-2.75) {localabsorber={$c_1$}{$c_2$}{$\vdots$}{$c_{39}$}{$c_{40}$}};
\node at (.5,-1.5) {$:=$};

\begin{scope}[xshift = 1 cm]
\pic (vw1) at (0,0) {colorgadget={$c_1$}{$d_1$}{$\scriptstyle\star^1_1$}{draw=red,draw opacity =0}{draw=orange,draw opacity =0}};
\pic (vw2) at (2.5,0) {colorgadget={$c_2$}{$d_2$}{$\scriptstyle\star^1_2$}{draw=yellow!95!black,draw opacity =0}{draw=green!60!black,draw opacity =0}};
\pic (vw40) at (5.5,0) {colorgadget={$c_{40}$}{$d_{40}$}{$\scriptstyle \star^1_{40}$}{draw=blue,draw opacity =0}{draw=purple,draw opacity =0}};

\pic (w140) at (9.5,0) {k240={$w_1$}{$w_{40}$}}; 

\draw[dotted] (vw1-right) -- node[lbl,above=2pt,font=\scriptsize] {$\star^4_1$} (vw2-left);
\draw[dotted] (vw2-right) -- node[lbl,above=2pt,font=\scriptsize] {$\star_{18}\ldots\star_{390}$} (vw40-left);
\draw[dotted] (vw40-right) -- node[lbl,above=2pt,font=\scriptsize] {$\star^4_{40}$} (w140-L);
\draw[dotted] (w140-R) -- node[lbl,above=2pt,font=\scriptsize] {$\star^5_1$} (12,0);

\pic (e12) at (0.5,-3) {colorgadget={$f_1$}{$f_2$}{$\scriptstyle \star^2_1$}{draw=cyan,draw opacity =0}{draw=brown,draw opacity =0}};
\pic (e23) at (3,-3) {colorgadget={$f_2$}{$f_3$}{$\scriptstyle \star^2_2$}{draw=brown,draw opacity =0}{draw=teal,draw opacity =0}};
\pic (e3940) at (6,-3) {colorgadget={$f_{39}$}{$f_{40}$}{$\scriptstyle \star^2_{39}$}{draw=violet,draw opacity =0}{draw=pink,draw opacity =0}};

\pic (w12) at (8.5,-3) {vertexgadget={$w_1$}{$w_2$}{$\scriptstyle \star^3_1$}{0}{0}};
\pic (w23) at (10.9,-3) {vertexgadget={$w_2$}{$w_3$}{$\scriptstyle \star^3_2$}{0}{0}};
\pic (w3940) at (14.1,-3) {vertexgadget={$w_{39}$}{$w_{40}$}{$\scriptstyle \star^3_{39}$}{0}{0}};

\draw[dotted] (0,-3) -- (e12-left);
\draw[dotted] (e12-right) -- node[lbl,above=2pt,font=\scriptsize] {$\star^5_2$} (e23-left);
\draw[dotted] (e23-right) -- node[lbl,above=2pt,font=\scriptsize] {$\star_{2063}\ldots\star_{2435}$} (e3940-left);

\draw[dotted] (e3940-right) -- node[lbl,above=2pt,font=\scriptsize] {$\star^6_{1}$} (w12-left);
\draw[dotted] (w12-right) -- node[lbl,above=2pt,font=\scriptsize] {$\star^6_2$} (w23-left);
\draw[dotted] (w23-right) -- node[lbl,above=2pt,font=\scriptsize] {$\star_{2463}\ldots\star_{2815}$} (w3940-left);
\end{scope}
\end{tikzpicture}
\end{adjustbox}

\hrule

\begin{adjustbox}{width=\textwidth}
\begin{tikzpicture}[
    >=Stealth,
    vertex/.style={circle,draw,inner sep=1.5pt},
    lbl/.style={draw=none,fill=none,inner sep=1pt}
]

\tikzset{
  pics/vertexgadget/.style n args={5}{
    code={
      \draw (0,0) -- (.375,.65) -- (1.125,.65) -- (1.5,0) -- (1.125,-.65) -- (.375,-.65) -- cycle;
      \draw (0.25,0) -- (1.25,0);
      \node[draw,circle,inner sep=1pt,draw opacity=#4,text opacity=1] at (.75,0.3) {#1};
      \node[draw,circle,inner sep=1pt,draw opacity=#5,text opacity=1] at (.75,-0.3) {#2};
      \node at (.85,-.85) {#3};

      \coordinate (-left)  at (0,0);
      \coordinate (-right) at (1.5,0);
    }
  }}
\tikzset{
  pics/colorgadget/.style n args={5}{
    code={
      \draw (0,0) -- (.75,.75) -- (1.5,0) -- (.75,-.75) -- cycle;
      \draw (0.25,0) -- (1.25,0);
      \node[#4,circle,inner sep=.8pt] at (.75,0.3) {#1};
      \node[#5,circle,inner sep=.8pt] at (.75,-0.3) {#2};
      \node at (.75,-.95) {#3};

      \coordinate (-left)  at (0,0);
      \coordinate (-right) at (1.5,0);
    }
  }}
\tikzset{
  pics/k240/.style n args={2}{
    code={
      \node[circle, draw, inner sep=1.3pt] (-L) at (-1,0) {};
      \node[circle, draw, inner sep=1.3pt] (-T) at (0,1.2) {};
      \node[circle, draw, inner sep=1.3pt] (-R) at (1,0) {};
      \node[circle, draw, inner sep=1.3pt] (-B) at (0,-1) {};
      \node[circle, draw, inner sep=1.3pt] (-M) at (0,.45) {};

      \draw[orange, opacity=.1] (-L) -- (-T);
      \draw[green!60!black] (-L) -- (-M);
      \draw[purple, opacity=.1] (-L) -- (-B);

      \draw[cyan, opacity=.1] (-R) -- (-T);
      \draw[brown] (-R) -- (-M);
      \draw[pink, opacity=.1] (-R) -- (-B);

      \node[fill=white, inner sep=1pt] at (-0.52,0.28) {$d_2$};
      \node[fill=white, inner sep=1pt] at (0.52,0.28) {$f_2$};

      \node at (0,-0.15) {$\vdots$};

      \node[above] at (-M) {$w_2$};
      \node[above=3pt] at (-T) {#1};
      \node[below=3pt] at (-B) {#2};      
      \node[below=12pt] at (-B) {$\scriptstyle \star_{401}\star_{402}$};
    }
  }
}
\tikzset{
  pics/localabsorber/.style n args={5}{
    code={
      \draw (0,0) rectangle (1,2.5);

      \draw (0,2) -- (1,2);
      \draw (0,1.5) -- (1,1.5);
      \draw (0,1) -- (1,1);
      \draw (0,.5) -- (1,.5);

      \node at (0.5,2.25) {#1};
      \node[circle, draw, inner sep=1pt] at (0.5,1.75) {#2};
      \node at (0.5,1.35) {#3};
      \node at (0.5,.75) {#4};
      \node at (0.5,0.25) {#5};
    }
  }
}
\pic (c140) at (-1,-2.75) {localabsorber={$c_1$}{$c_2$}{$\vdots$}{$c_{39}$}{$c_{40}$}};
\node at (.5,-1.5) {$:=$};

\begin{scope}[xshift = 1 cm]
\pic (vw1) at (0,0) {colorgadget={$c_1$}{$d_1$}{$\scriptstyle\star^1_1$}{draw=red,draw opacity=0}{draw=orange}};
\pic (vw2) at (2.5,0) {colorgadget={$c_2$}{$d_2$}{$\scriptstyle\star^1_2$}{draw=yellow!95!black}{draw=green!60!black,draw opacity=0}};
\pic (vw40) at (5.5,0) {colorgadget={$c_{40}$}{$d_{40}$}{$\scriptstyle \star^1_{40}$}{draw=blue,draw opacity=0}{draw=purple}};

\pic (w140) at (9.5,0) {k240={}{}}; 

\draw[dotted] (vw1-right) -- node[lbl,above=2pt,font=\scriptsize] {$\star^4_1$} (vw2-left);
\draw[dotted] (vw2-right) -- node[lbl,above=2pt,font=\scriptsize] {$\star_{18}\ldots\star_{390}$} (vw40-left);
\draw[dotted] (vw40-right) -- node[lbl,above=2pt,font=\scriptsize] {$\star^4_{40}$} (w140-L);
\draw[dotted] (w140-R) -- node[lbl,above=2pt,font=\scriptsize] {$\star^5_1$} (12,0);

\pic (e12) at (0.5,-3) {colorgadget={$f_1$}{$f_2$}{$\scriptstyle \star^2_1$}{draw=cyan}{draw=brown,draw opacity=0}};
\pic (e23) at (3,-3) {colorgadget={$f_2$}{$f_3$}{$\scriptstyle \star^2_2$}{draw=brown,draw opacity=0}{draw=teal}};
\pic (e3940) at (6,-3) {colorgadget={$f_{39}$}{$f_{40}$}{$\scriptstyle \star^2_{39}$}{draw=violet,draw opacity=0}{draw=pink}};

\pic (w12) at (8.5,-3) {vertexgadget={$w_1$}{$w_2$}{$\scriptstyle \star^3_1$}{1}{0}};
\pic (w23) at (10.9,-3) {vertexgadget={$w_2$}{$w_3$}{$\scriptstyle \star^3_2$}{0}{1}};
\pic (w3940) at (14.1,-3) {vertexgadget={$w_{39}$}{$w_{40}$}{$\scriptstyle \star^3_{39}$}{0}{1}};

\draw[dotted] (0,-3) -- (e12-left);
\draw[dotted] (e12-right) -- node[lbl,above=2pt,font=\scriptsize] {$\star^5_2$} (e23-left);
\draw[dotted] (e23-right) -- node[lbl,above=2pt,font=\scriptsize] {$\star_{2063}\ldots\star_{2435}$} (e3940-left);

\draw[dotted] (e3940-right) -- node[lbl,above=2pt,font=\scriptsize] {$\star^6_{1}$} (w12-left);
\draw[dotted] (w12-right) -- node[lbl,above=2pt,font=\scriptsize] {$\star^6_2$} (w23-left);
\draw[dotted] (w23-right) -- node[lbl,above=2pt,font=\scriptsize] {$\star_{2463}\ldots\star_{2815}$} (w3940-left);
\end{scope}
\end{tikzpicture}
\end{adjustbox}
\end{center}
\caption{$(\{c_1,\ldots,c_{40}\},\star)$-absorber, with $c_2$-absorbing path shown. We use the short-hand notation for this gadget given on the left side of the figure in later constructions.}
\label{fig:localcolabs}
\end{figure}

\section{Global absorbers}\label{sec:global_abs}

Recall \Cref{def:absorber} of an $\mathcal{S}$-absorber. 
What we refer to as a `global absorber' is a $\binom{X}{\mu n}$-absorber for a set $X$ of vertices or colors. The `distributive absorption' method, due to Montgomery~\cite{Mont}, constructs a global absorber from a robust local absorption property, as given by \Cref{lem:localvtxabs} and \Cref{lem:localcolorabs}. The method proceeds by constructing a global absorber out of local absorbers by using the following auxiliary robustly-matchable bipartite graph as a template.

\begin{lemma}[Robustly matchable bipartite graph, \cite{ABKPT}, Lemma 4.6]\label{lem:Mont}
For every $0<\beta\leq 1$ and for sufficiently large $m\in\N$ with $\beta m\in\N$, there exists a bipartite graph $H_m$ with parts $X\dot\cup Y$ and $Z$, such that $|X|=(1+\beta)m$, $|Y|=2m$, $|Z|=3m$, $H_m$ has maximum degree at most $40$, and for every $X'\subseteq X$ of size $m$ there exists a perfect matching between $X'\cup Y$ and $Z$ in $H_m$.
\end{lemma}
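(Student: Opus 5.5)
This is Montgomery's robustly matchable bipartite graph, cited from \cite{ABKPT} (and originally \cite{Mont}); the plan is to reproduce Montgomery's probabilistic construction. The strategy is to build $H_m$ as a union of a few sparse random bipartite graphs. The degree bound $40$ is an absolute constant, so we need sparse random bipartite graphs whose maximum degree is bounded by construction. Concretely, take $H_m$ to be the union of $O(1)$ random matchings or random biregular bipartite graphs: some between $X$ and $Z$, some between $Y$ and $Z$. Each vertex then has degree at most a fixed constant, and we check afterwards that this constant can be made at most $40$. If necessary we first prove the result for $m$ in some range and use a self-similar scaling, as in Montgomery's original proof.

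The key steps are as follows. First, reduce robust matchability to Hall's condition. For fixed $X'\subseteq X$ with $|X'|=m$, a perfect matching between $X'\cup Y$ and $Z$ exists iff $|N(A)|\ge |A|$ for all $A\subseteq X'\cup Y$, and symmetrically for subsets of $Z$ (the sides have equal size $3m$). Second, split into small and large sets. For sets of size at most $\gamma m$, use the expansion of random sparse bipartite graphs: with high probability every $A\subseteq X\cup Y$ with $|A|\le \gamma m$ has $|N(A)|\ge 2|A|$, and the same holds for subsets of $Z$. This is uniform over $X'$ since it does not refer to $X'$. Third, for sets of size at least $\gamma m$ (but up to $3m-\gamma m$), use that any two sets of linear size, one in $X\cup Y$ and one in $Z$, are joined by an edge. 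This is a union bound over $2^{O(m)}$ pairs against $e^{-\Omega(m)}$ per pair; it needs enough random matchings that the exponent beats the entropy. The large-set case then follows: if $|N(A)|<|A|$, then $A$ and $Z\setminus N(A)$ are both linear and spanned by no edges. Fourth, handle the boundary regime where $|A|$ is close to $3m$. This is done by taking complements: apply the small-set expansion to $Z\setminus N(A)$, using that its neighbourhood must avoid $A$.

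The main obstacle is making Hall's condition hold simultaneously for all $\binom{(1+\beta)m}{m}$ choices of $X'$ while keeping the degree at most $40$. Naive random graphs fail because $Y$ is entirely forced and $X'$ is adversarial. For $A$ mostly in $X'$ with $|A|$ near $m$, the neighbourhood must reach about $3m$ while competing with $Y$. Montgomery's resolution is a carefully designed structure: $Z$ is split as $Z_1\cup Z_2$ with $|Z_1|=m$ matched robustly to $X$, and $Y$ is tied to $Z_2$ and an auxiliary part, with parameters chosen relative to $\beta$. I would follow that design, using expansion lemmas for the unbalanced regime. I expect checking the degree bound $40$ and the dependence on $\beta$ to be the fiddly part. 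Since the statement is quoted as \cite[Lemma 4.6]{ABKPT}, it is legitimate to simply invoke it.
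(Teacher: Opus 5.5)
\emph{Comparison with the paper.} The paper does not prove this lemma; it imports it from \cite{ABKPT}, and it goes back to Montgomery \cite{Mont}, whose statement is, up to renaming the parts, the case $\beta=1$. The case $\beta<1$ follows by deleting $(1-\beta)m$ vertices of $X$: every $m$-subset of the smaller $X$ is an $m$-subset of the larger one, and deleting vertices does not raise degrees. So your final sentence is exactly what the paper does, and the $\beta$-dependence you expected to be fiddly is trivial.

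\emph{The gap.} As a proof, your sketch has a genuine gap at exactly the point that makes the lemma nontrivial. In Step~2 you claim that small $B\subseteq Z$ expand into $X\cup Y$ and that this is uniform over $X'$. It is not. What Hall's condition, and your complement argument in Step~4, actually needs is $|N(B)\cap(X'\cup Y)|\ge|B|$. Since $|N(B)\cap X|\le 40|B|$ while $|X\setminus X'|=\beta m$, the adversary can place all of $N(B)\cap X$ outside $X'$ whenever $|B|\le\beta m/40$. Expansion into $X\cup Y$, even by a factor of $2$, is therefore useless here. A single $z\in Z$ with no neighbour in $Y$ already defeats the lemma, and your conditions do not rule this out.

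The repair you attribute to Montgomery cannot work either. No $Z_1$ of size $m$ can be perfectly matched to every $m$-subset of $X$ in a graph of maximum degree $40$; apply the same argument to one vertex of $Z_1$.

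Separately, the threshold $\gamma m$ in Step~3 cannot be small. A set $S\subseteq Z$ of size $\gamma m$ has at most $40\gamma m$ neighbours, so for $\gamma<(3+\beta)/41$ it has no edge to some $\gamma m$-subset of $X\cup Y$.

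\emph{The fix.} Require expansion into the part that is always present. It suffices that $H_m$ satisfies the following three properties, none of which mentions $X'$:
(i) $|N(B)\cap Y|\ge|B|$ for all $B\subseteq Z$ with $|B|\le m$;
(ii) $|N(A)|\ge|A|$ for all $A\subseteq X\cup Y$ with $|A|\le m$;
(iii) every $m$-subset of $X\cup Y$ has an edge to every $m$-subset of $Z$.

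Suppose $A\subseteq X'\cup Y$ has $|N(A)|<|A|$.
If $|A|\le m$, this contradicts (ii).
If $m<|A|\le 2m$, then $|Z\setminus N(A)|>m$, contradicting (iii).
If $|A|>2m$, let $B=Z\setminus N(A)$, so $|B|>3m-|A|$ and $N(B)\cap A=\emptyset$. If $|B|\ge m$ this contradicts (iii). Otherwise (i) gives $|N(B)\cap Y|\ge|B|>3m-|A|=|(X'\cup Y)\setminus A|$. This is impossible, since $N(B)\cap Y\subseteq (X'\cup Y)\setminus A$.

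Properties (i)--(iii) follow from routine union-bound estimates. Take a union of two random bipartite graphs with prescribed bounded degrees: one between $Z$ and $Y$ for (i), and one between $Z$ and $X\cup Y$ for (ii) and (iii). The degrees needed fit within $40$, and no scaling argument is required.
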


We separately construct global vertex absorbers and global color absorbers; the proofs are nearly identical.

\begin{lemma}[Vertex Global Absorber]\label{lem:vglobalabsorber}
Let $\frac{1}{n} \ll \ep\ll\beta\leq \mu\ll\nu\ll\alpha \leq 1$.
Let $D$ be an $(n,\alpha,\ep)$-edge-color pseudorandom and $4\nu$-color-gadget pseudorandom digraph.
Let $\gamma n=2825(3\mu n)-3$, and let $\star \in \arcset^{\gamma n}$.
Then for every $V'\sbeq V$ and $\C'\sbeq \C$ with $|V'|,|\C'|\geq (1-\nu)n$ and for every $X\sbeq V$ of size $|X|= (\mu+\beta) n$, there exists an $(\binom{X}{\mu n},\star)$-global absorber with internal vertices and colors in $V'$ and $\C'$.
\end{lemma}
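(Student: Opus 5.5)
We follow Montgomery's distributive absorption template. Set $m=\mu n$ and $\beta' = \beta/\mu$, so that $|X| = (1+\beta')m$, and apply \Cref{lem:Mont} with parameter $\beta'$ to obtain $H_m$ with parts $X\dot\cup Y$ and $Z$, $|Y|=2m$, $|Z|=3m$, and maximum degree at most $40$. We identify the part $X$ of $H_m$ with the given vertex set $X$. We then choose $2m$ fresh vertices of $V'\setminus X$ to play the role of $Y$ (identified arbitrarily), and set aside a further set of vertices to be used as internal vertices later; this is possible since $\mu\ll\nu$. By adding dummy neighbours (or simply allowing fewer options), we view each $z\in Z$ as having a list $N_{H_m}(z)$ of at most $40$ vertices of $X\cup Y$. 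To match the exact size $40$ required in \Cref{lem:localvtxabs}, we either pad each list with repeated entries or observe that the construction of \Cref{lem:localvtxabs} works verbatim for any number $\le 40$ of vertices. The second option is cleaner, but then the orientation length varies, so I would instead pad with distinct fresh ``private'' vertices that are forced to be absorbed. Simplest is to note that \Cref{lem:Mont} can be taken $40$-regular on the $Z$ side, by adding edges to new vertices of $Y$, which only helps the matching property.

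For each $z\in Z$ in turn, we apply \Cref{lem:localvtxabs} to obtain a $(N_{H_m}(z),\star^{(z)})$-absorber $(A_z,s_z,t_z)$. Here $\star^{(z)}$ is the appropriate consecutive block of $2822$ entries of $\star$. We take its internal vertices in $V'$ minus $X\cup Y$ and minus everything used so far, and its internal colors in $\C'$ minus all colors used so far. We then join $t_z$ to $s_{z'}$ for consecutive $z,z'$ with \Cref{lem:connecting}, using paths of length $3$ with the next three entries of $\star$, again avoiding everything used so far. The total length is $3m\cdot 2822 + (3m-1)\cdot 3 = 2825\cdot 3m-3=\gamma n$, which matches the statement, so the blocks of $\star$ partition it exactly. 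The bookkeeping point is that the total number of vertices and colors consumed is at most $O(\mu n)\le \nu n$ (each local absorber has $O(1)$ size, with the constant absolute). Thus at every step the available sets still have size at least $(1-4\nu)n$, or $(1-3\nu)n$ for the hypothesis of \Cref{lem:localvtxabs}, and $(1-\nu')n$ for the connecting lemma. This is why we need $4\nu$-color-gadget pseudorandomness, and why we need $\mu\ll\nu$ with the constant $2825\cdot 3\cdot O(1)$ absorbed.

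To verify the absorbing property, let $X'\subseteq X$ have size $m$. By \Cref{lem:Mont} there is a perfect matching $\phi: Z\to X'\cup Y$ in $H_m$. We activate each $A_z$ along its $\phi(z)$-absorbing path and traverse the connecting paths. Since $\phi$ is a bijection, every vertex of $Y\cup X'$ is used exactly once and no vertex of $X\setminus X'$ is used. All internal vertices and colors are used, and all paths have length $\gamma n$ with orientation $\star$. Rainbowness holds because the local absorbers are internally color-disjoint and none of the local absorbers for vertices contribute non-internal colors. The vertices of $Y$ are then internal vertices of the global absorber, as required by \Cref{def:absorber}: the vertex set of each path is $(V(A)\setminus X)\cup X'$ and the color set is exactly the internal colors. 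The main obstacle I expect is the degree-$40$ mismatch handled above, together with ensuring that the vertices of $X\cup Y$, which appear in up to $40$ distinct local absorbers, are never used internally by another absorber or connecting path. This is handled by excluding $X\cup Y$ from every $V'$ passed to the lemmas.
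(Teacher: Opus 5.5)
Your proposal is correct and follows essentially the same route as the paper: apply \Cref{lem:Mont} with $m=\mu n$, identify its parts with $X$ and a fresh $Y\subseteq V'\setminus X$, enlarge each $N(z_i)$ to a $40$-set $N_i\supseteq h(N(z_i))$ of existing vertices of $X\cup Y$ (i.e.\ add edges, which preserves the matching property), build local vertex absorbers on the $N_i$, chain them with length-$3$ connecting paths, and activate them via the perfect matching. Of your padding options, keep only this edge-adding one; the ``fresh private vertices forced to be absorbed'' option would break the count $|X'|+|Y|=|Z|$. Your rescaling $\beta'=\beta/\mu$ and your explicit exclusion of $X\cup Y$ from the sets passed to the lemmas are slightly more careful than the paper's write-up.
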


\begin{proof}
By \Cref{lem:Mont}, there exists an auxiliary robustly matchable bipartite graph $H$ with parts $Z'$ and $X'\cup Y'$, where $|Z'|=3\mu n$, $|X'|=(\mu+\beta)n$, $|Y'|=2\mu n$. 

Let $Y\subseteq  V'\setminus X$ be chosen to be of size $|Y|=2\mu n$ arbitrarily. We define bijections $h_X: X'\to X$ and $h_Y: Y'\to Y$ arbitrarily and let $h=h_X\cup h_Y$.

Arbitrarily index the vertices in $Z'$ as $z_1, \dots, z_{3\mu n}$. For each vertex $z_i\in Z'$, observe that $|N(z_i)|\leq 40$. Let $N_i \subseteq X\cup Y$ be of size $|N_i|=40$ such that $h(N(z_i))\subseteq N_i$. 

By \Cref{lem:localvtxabs}, we may successively find internally vertex-disjoint and internally color-disjoint $(N_i,\star^i)$-local vertex absorbers $(A_i,s_i,t_i)$ where $\star^i=(\star_{2825(i-1)+1},\ldots, \star_{2825(i-1)+2822})$ with $V(A_i) \subseteq V'$ and $\C(A_i) \subseteq \C'$ for each $i \in [3\mu n]$, since the number of such absorbers we need is $3\mu n\ll \nu n$. By \Cref{lem:connecting}, we may successively find vertex-disjoint and color-disjoint paths $P_i$ of length three from $t_i$ to $s_{i+1}$ with orientation $(\star_{2825i-2},\star_{2825i-1},\star_{2825i})$ for $i\in [3\mu n-1]$ with internal vertices in $V'\setminus (V(A_1)\cup\ldots V(A_{3\mu n}))$ and colors in $\C'\setminus (\C(A_1)\cup\ldots \C(A_{3\mu n}))$.

We show that $A = A_1 \cup P_1 \cup \cdots \cup P_{3\mu n-1} \cup A_{3\mu n}$ is a $(\binom{X}{\mu n}, \star)$-absorber with starting vertex $s_1$ and ending vertex $t_{3\mu n}$. Let $U\subseteq X$ be of size $|U|=\mu n$ and let $U'=h^{-1}(U)$. By \Cref{lem:Mont} there exists a matching in $H$ between $Z'$ and $U'\cup Y'$. If $z_i$ is matched to $z_i'$ in this matching, we activate $A_i$ along the $h(z_i')$-absorbing path. In total, this yields a rainbow path from $s_1$ to $t_{3\mu n}$ which absorbs exactly the vertices of $U$.
\end{proof}

\begin{figure}
    \begin{center}
        \begin{tikzpicture}[
    >=Stealth,
    vertex/.style={circle,draw,inner sep=2pt},
    rededge/.style={red, thick},
    every node/.style={font=\small}
]

\draw[thick] (-4,2.5) ellipse (3 and 1.0);
\node at (-4.2,3.85) {$X'$};
\node at (-1.5,2.5) {$U'$};
\node at (-6.1,2.5) {$X'\setminus U'$};

\draw[thick] (3.2,2.55) ellipse (3.1 and 1.05);
\node at (3.2,3.85) {$Y'$};

\draw[thick] (-0.4,-1.35) ellipse (5.1 and 1.15);
\node at (-0.4,-2.95) {$Z'$};

\begin{scope}
  \clip (-4,2.5) ellipse (2.4 and 1.0);
  \draw[thick] (-5.4,1.65) -- (-5.4,3.35);
\end{scope}

\node[vertex,label=above:$z'_1$] (zp1) at (-3.9,2.45) {};
\node[vertex,label=above:$z'_2$] (zp2) at (1.1,2.45) {};
\node[vertex,label=above:$z'_{3\mu n}$] (zp3) at (2.5,2.45) {};

\node[vertex,label=below:$z_1$] (z1) at (-3.4,-1.45) {};
\node[vertex,label=below:$z_2$] (z2) at (-2,-1.45) {};
\node[vertex,label=below:$z_{3\mu n}$] (z3) at (2.5,-1.45) {};

\draw[rededge] (zp1) -- (z1);
\draw[rededge] (zp2) -- (z2);
\draw[rededge] (zp3) -- (z3);

\node at (-4.7,2.45) {$\cdots$};
\node at (-2.8,2.45) {$\cdots$};

\node at (4,2.45) {$\cdots$};

\node at (-0.4,-1.45) {$\cdots$};
\node at (1,-1.45) {$\cdots$};

\tikzset{
  pics/localabsorber/.style n args={4}{
    code={
      \draw (0,0) rectangle (1.5,3);

      \draw (0,2.1) -- (1.5,2.1);
      \draw (0,.9) -- (1.5,.9);

      \node at (0.75,2.6) {#1};
      \node[draw=red,circle,inner sep=1pt] at (0.75,1.5) {#2};
      \node at (0.75,0.6) {#3};
      \node at (0.75,-0.5) {#4};

      \node (-left) at (0.1,1.5) {};
      \node (-right) at (1.4,1.5) {};
    }
  }
}
\node at (-0.4, -4.1) {Global Absorber};
\node at (-0.4, 4.3) {Auxiliary Graph};

\pic (A1) at (-4.5,-8) {localabsorber = {$\vdots$}{$h(z_1')$}{$\vdots$}{$N_1$-absorber}};
\pic (A2) at (-1.5,-8) {localabsorber = {$\vdots$}{$h(z_2')$}{$\vdots$}{$N_2$-absorber}};
\pic (A3) at (2.2,-8) {localabsorber = {$\vdots$}{$h(z_{3\mu n}')$}{$\vdots$}{$N_{3\mu n}$-absorber}};

\node (dots) at (1.15,-6.5) {$\cdots$};

\draw[decorate, decoration = {snake}] (A1-right) -- (A2-left);
\draw[decorate, decoration = {snake}] (A2-right) --  (dots);
\draw[decorate, decoration = {snake}] (dots) -- (A3-left);

\draw (-6,-3.6) -- (6,-3.6);

\end{tikzpicture}
\end{center}
\caption{$\binom{X}{\mu n}$-absorber, with a particular $U$-absorbing path shown. Local absorbers within global absorber are activated based on matching between $Z'$ and $U'\cup Y'$ in the robustly matchable auxiliary graph $H$.}
\label{fig:globalabsorber}
\end{figure}

\begin{lemma}[Color Global Absorber]\label{lem:cglobalabsorber}
Let $\frac{1}{n} \ll \ep \ll \beta \leq \mu \ll \nu \ll \alpha \leq 1$.
Let $D$ be an $(n,\alpha,\ep)$-edge-color pseudorandom and $4\nu$-color gadget pseudorandom digraph.
Let $\gamma n = 2784(3\mu n)-3$, and let $\star \in \arcset^{\gamma n}$.
Then for every $V'\sbeq V$ and $\C'\sbeq \C$ with $|V'|,|\C'|\geq (1-\nu)n$ and for every $X \subseteq \C$ of size $|X|= (\mu+\beta) n$, there exists an $(\binom{X}{\mu n},\star)$-global absorber with internal vertices and colors in $V'$ and $\C'$.
\end{lemma}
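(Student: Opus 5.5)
The plan is to follow the proof of \Cref{lem:vglobalabsorber} verbatim, replacing \Cref{lem:localvtxabs} by \Cref{lem:localcolorabs}. The only structural differences are the length of each local absorber (2781 instead of 2822), the resulting block length $2781+3=2784$, and the fact that the ``dummy'' set $Y$ now consists of colors rather than vertices. First apply \Cref{lem:Mont} with $m=\mu n$ and $\beta m = \beta n$. This gives an auxiliary graph $H$ with parts $X'\cup Y'$ and $Z'$, where $|X'|=(\mu+\beta)n$, $|Y'|=2\mu n$, $|Z'|=3\mu n$, and maximum degree at most $40$. Choose $Y\subseteq \C'\setminus X$ of size $2\mu n$, which is possible since $|\C'|\ge(1-\nu)n$ and $\mu\ll\nu$. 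Fix arbitrary bijections $h_X\colon X'\to X$ and $h_Y\colon Y'\to Y$, and set $h=h_X\cup h_Y$. Index $Z'=\{z_1,\dots,z_{3\mu n}\}$, and for each $i$ pad $h(N(z_i))$ to a $40$-set $N_i\subseteq X\cup Y$ of colors.

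Next, for $i\in[3\mu n]$, successively apply \Cref{lem:localcolorabs} to obtain an $(N_i,\star^i)$-absorber $(A_i,s_i,t_i)$ with $\star^i=(\star_{2784(i-1)+1},\dots,\star_{2784(i-1)+2781})$. These absorbers should be internally vertex- and color-disjoint from each other and avoid $X\cup Y$ as internal colors. To justify the successive applications, we apply the lemma with sets $V''\subseteq V'$ and $\C''\subseteq \C'\setminus(X\cup Y)$ from which everything used so far is removed. The absorbers have bounded size, so at most $O(\mu n)$ vertices and colors are used in total. Hence $|V''|,|\C''|\ge (1-\nu-O(\mu))n\ge(1-3\nu/\,\!\!\!\!\!\;\;)n$ up to the slack built in: the hypothesis gives $4\nu$-color-gadget pseudorandomness, while \Cref{lem:localcolorabs} needs only $3\nu'$ for its own $\nu'$. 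Taking $\nu'=\nu+O(\mu)\le 4\nu/3$ makes the parameters compatible, and $\ep\ll\nu'$ still holds. One subtlety is that the colors of $N_i$ must lie in the color set passed to \Cref{lem:localcolorabs}. Each absorber is allowed to use $N_i$ as non-internal colors, so we pass $\C''\cup N_i$; this keeps all sizes large, and internal colors avoid $N_i$ by definition of an absorber.

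Then join $t_i$ to $s_{i+1}$ for $i\in[3\mu n-1]$ by rainbow paths of length three with orientations $(\star_{2784i-2},\star_{2784i-1},\star_{2784i})$, using \Cref{lem:connecting} with sets avoiding everything previously used and avoiding $X\cup Y$. The total length is $2781\cdot 3\mu n+3(3\mu n-1)=2784(3\mu n)-3=\gamma n$, so the concatenation $A$ has the orientation $\star$, start $s_1$, and end $t_{3\mu n}$.

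Finally, verify the absorbing property. Given $U\in\binom{X}{\mu n}$, take the perfect matching in $H$ between $h^{-1}(U)\cup Y'$ and $Z'$, and activate each $A_i$ along its $h(z_i')$-absorbing path, where $z_i'$ is the partner of $z_i$. The resulting path uses every internal color of $A$ exactly once and every color in $U\cup Y$ exactly once. It uses no color of $X\setminus U$: each such color lies only in the sets $N_i$, and it is never the activated one. Its vertex set is the same for every choice of $U$, since the local color absorbers have no vertex options. The path is rainbow because the pieces are color-disjoint apart from the $N_i$, and each color of $U\cup Y$ is matched to exactly one $z_i$. Thus the internal colors of $A$ include $Y$, and $A$ is a $(\binom{X}{\mu n},\star)$-absorber.

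The main obstacle is the bookkeeping in the successive applications. The shared option colors in the sets $N_i$ must not be consumed as internal colors by other absorbers or connecting paths, and the forbidden sets must stay within the $\nu$-slack. Excluding $X\cup Y$ up front, and adding $N_i$ back only for $A_i$, handles this.
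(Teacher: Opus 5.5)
Your proposal is correct and follows the paper's proof essentially step for step. Both use Montgomery's template graph $H$, padded sets $N_i\supseteq h(N(z_i))$, and successive local color absorbers joined by length-three connectors, and both activate each $A_i$ along its $h(z_i')$-absorbing path according to the perfect matching between $Z'$ and $h^{-1}(U)\cup Y'$.

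Your extra bookkeeping makes explicit what the paper compresses into ``since $3\mu n\ll\nu n$''. This covers forbidding $X\cup Y$ as internal colors everywhere, passing $\C''\cup N_i$ to \Cref{lem:localcolorabs}, and the slack $3\nu'\le 4\nu$. The exclusion of $X\cup Y$ is genuinely needed for the rainbow property, and the paper leaves it implicit. The garbled lower bound in that paragraph should simply read $(1-\nu')n$.
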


\begin{proof}
By \Cref{lem:Mont} there exists an auxiliary robustly-matchable bipartite graph $H$ with parts $Z'$ and $X'\cup Y'$, where $|Z'|=3\mu n$, $|X'|=(\mu+\beta)n$, $|Y'|=2\mu n$. 

Let $Y\subseteq  \C'\setminus X$ be chosen to be of size $|Y|=2\mu n$ arbitrarily. We define bijections $h_X:X'\to X$ and $h_Y:Y'\to Y$ arbitrarily and let $h=h_X\cup h_Y$.

Arbitrarily index the vertices in $Z'$ as $z_1, \dots, z_{3\mu n}$. For each vertex $z_i\in Z'$, observe that $|N(z_i)|\leq 40$. Let $N_i \subseteq X\cup Y$ be of size $|N_i|=40$ such that $h(N(z_i))\subseteq N_i$. 

By \Cref{lem:localcolorabs}, we may successively find internally vertex-disjoint and internally color-disjoint $(N_i,\star^i)$-local color absorbers $(A_i,s_i,t_i)$ where $\star^i=(\star_{2784(i-1)+1},\ldots, \star_{2784(i-1)+2781})$ with $V(A_i) \subseteq V'$ and $\C(A_i) \subseteq C'$, since the number of such absorbers we need is $3\mu n\ll \nu n$. By \Cref{lem:connecting}, we may successively find vertex-disjoint and color-disjoint paths $P_i$ of length three from $t_i$ to $s_{i+1}$ with orientation $(\star_{2784i-2},\star_{2784i-1},\star_{2784i})$ for $i\in\{1,\ldots, 3\mu n-1\}$ with internal vertices in $V'\setminus (V(A_1)\cup\ldots v(A_{3\mu n}))$ and colors in $\C'\setminus (\C(A_1)\cup\ldots \C(A_{3\mu n}))$.

We show that $A = A_1 \cup P_1 \cup \cdots \cup P_{3\mu n-1} \cup A_{3\mu n}$ is a global absorber with starting vertex $s_1$ and ending vertex $t_{3\mu n}$. Let $U\subseteq X$ be of size $|U|=\mu n$ and let $U'=h^{-1}(U)$. By \Cref{lem:Mont} there exists a matching in $H$ between $Z'$ and $U'\cup Y'$. If $z_i$ is matched to $z_i'$ in this matching, we activate $A_i$ along the $h(z_i')$-absorbing path. In total, this yields a rainbow path from $s_1$ to $t_{3\mu n}$ which absorbs exactly the colors of $U$.
\end{proof}

\section{Proof of \texorpdfstring{\Cref{thm:main}}{Theorem~\ref{thm:main}}}\label{sec:mainproof}

To begin our proof of \Cref{thm:main}, we first set aside a random set $\Vres$ of vertices which we call the `vertex reservoir' and a random set $\Cres$ of colors which we call the `color reservoir.' We then establish global absorbers $A_V$ and $A_\C$ on $\Vres$ and $\Cres$, respectively. On the remainder of the graph, we find an almost spanning rainbow path; what is leftover we incorporate into the path using $\Vres$ and $\Cres$, and then we activate the absorber. The following lemma allows us to incorporate the leftovers using $\Vres$ and $\Cres$; this is a analogue of Lemma 6.1 in~\cite{KLS} to arbitrary orientations, whose proof we give for completeness.

\begin{lemma}[Connecting via reservoirs]\label{lem:res}
Let $1/n \ll \ep \ll \zeta\ll\mu\ll\alpha\leq 1$.
Let $D$ be an $(n,\alpha,\ep)$-edge-color-pseudorandom colored digraph.
Then there exist $\Vres\subseteq V$, $\Cres\subseteq \C$ of size $\mu n$ such that for every distinct $u,v\in V$, for every $c\in\C$, for every $\Vres'\subseteq \Vres$, $\Cres'\subseteq \Cres$ of size at least $(\mu-\zeta)n$, and for every $\star\in\arcset^7$ there exists a rainbow directed path of length seven from $u$ to $v$ with orientation $\star$, with interior vertices in $\Vres'$ and colors in $\Cres'\cup\{c\}$ that contains the color $c$.
\end{lemma}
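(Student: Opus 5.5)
We choose $\Vres$ and $\Cres$ uniformly at random, each of size $\mu n$, and show that with positive probability they have the required property. The path will be built as $u\,x_1\,x_2\,y_1\,y_2\,x_3\,x_4\,v$: a middle edge $(y_1y_2)^{\star_4}$ carrying the color $c$, joined to $u$ and to $v$ by rainbow paths of length three through the reservoir. Each length-three path is found by the argument of \Cref{lem:connecting}, with \Cref{def:pseudo} applied inside the reservoir.

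The first step is to record the properties of the random reservoirs. For every vertex $w$ and $\star\in\arcset$, the set $\C^\star(w)$ has size at least $\alpha n/2$ by \Cref{def:pseudo} (\ref{pseudo:colordiversity}). Hence we may fix $\alpha n/2$ edges $(wz)^\star$ with distinct colors. A Chernoff bound then shows that, with high probability, for every $w$ and $\star$ there are at least $\alpha\mu^2 n/4$ such edges with $z\in\Vres$ and color in $\Cres$. Here we use that $\Vres$ and $\Cres$ are chosen independently and that the fixed edges have distinct colors.

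For the color $c$, \Cref{def:pseudo} (\ref{pseudo:match}) gives a matching $M_c$ of size $\alpha n/3$ in color $c$. With high probability, at least $\alpha\mu^2 n/6$ edges of $M_c$ have both endpoints in $\Vres$. A union bound over all $w$, $\star$ and $c$ is affordable because the failure probabilities are $e^{-\Omega(n)}$. Fix a choice of reservoirs with all these properties.

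Now take $u$, $v$, $c$, $\Vres'$, $\Cres'$ and $\star$ as in the statement. Since $\zeta\ll\mu^2$, deleting the at most $\zeta n$ missing reservoir vertices and colors, together with $u$ and $v$, still leaves about $\alpha\mu^2 n/7$ edges of $M_c$ inside $\Vres'$. Orient each such edge so that it reads as $(y_1y_2)^{\star_4}$; this fixes the roles of $y_1$ and $y_2$. Let $Y_1$ and $Y_2$ be the resulting sets of candidate endpoints. They have linear size and come with a bijection $Y_1\leftrightarrow Y_2$, and the color $c$ has been used on the middle edge.

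To connect $u$ to some $y_1\in Y_1$ along $\star_1\star_2\star_3$, we reuse the proof of \Cref{lem:connecting}. Take the $\star_1$-neighbors of $u$ in $\Vres'$ via colors from a set $\C_1\subseteq\Cres'$ of distinct colors. By the reservoir property this set has size $\Omega(\mu^2 n)\ge\ep n$. Similarly take the $(-\star_3)$-neighbors of vertices of $Y_1$ via a disjoint color set, and then apply \Cref{def:pseudo} (\ref{pseudo:edgecolor}) for the middle edge, with colors from the remaining part of $\Cres'$, which has size at least $\ep n$.

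The main obstacle is that the $u$-side and the $v$-side must meet at a matched pair $(y_1,y_2)$, while also staying vertex- and color-disjoint. I would handle this by a counting and pigeonhole argument in the spirit of \Cref{lem:shortpaths}. First show that the set $Y_1^*\subseteq Y_1$ of vertices reachable from $u$ by valid rainbow length-three paths has all but at most $\ep n$ elements of $Y_1$. Suppose instead that at least $\ep n$ vertices of $Y_1$ are unreachable. Their $(-\star_3)$-neighborhoods in the reservoir, through colors in a linear subset of $\Cres'$, have linear size, since the reservoir degree property holds for each of them. Applying \Cref{def:pseudo} (\ref{pseudo:edgecolor}) between these neighborhoods and the neighborhood of $u$ then gives a valid path, a contradiction.

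The same argument applies on the $v$ side with respect to $Y_2$. Both exceptional sets have size at most $\ep n$, which is much smaller than $|Y_1|$, so some matched pair $(y_1,y_2)$ is reachable from both sides. Vertex and color clashes between the two sides, and with $c$, must then be avoided. To do this, fix the $u$-side path first, remove its $O(1)$ vertices and colors (and $c$) from $\Vres'$ and $\Cres'$, and rerun the $v$-side argument. This changes the sets only by $O(1)$ elements, so it is harmless. One remaining detail is that the edges through $u$ need colors avoiding those already in use; this is handled by taking disjoint color subsets from the start.
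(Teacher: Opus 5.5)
Your proposal is correct and follows the paper's proof. It uses the same ingredients: independent random reservoirs, a color-$c$ matching edge $(xy)^{\star_4}$ inside $\Vres'$, the rainbow reservoir-neighborhood property at $u$, $x$, $y$, $v$, and \Cref{def:pseudo} (\ref{pseudo:edgecolor}) for the two remaining edges $(w_1w_2)^{\star_2}$ and $(w_3w_4)^{\star_6}$. Your disjoint color subsets make explicit a rainbow check that the paper leaves implicit.

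The only real difference is your reachability/pigeonhole step over matched pairs, which is unnecessary. A single vertex's reservoir neighborhood already has size at least $\ep n$, so your contradiction argument shows that every vertex is reachable. This per-vertex fact is also what actually justifies your ``rerun'' after fixing the $u$-side path; otherwise a freshly recomputed exceptional set of size $\ep n$ could contain the specific partner $y_2$. The paper simply fixes one matching edge and connects $u$ to $x$ and $y$ to $v$ directly.
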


\begin{proof}
We choose $\Vres\sbeq V$ and $\Cres\sbeq \C$ uniformly at random of size $\mu n$, independently, and show that with high probability, they satisfy the statement of the theorem.

First, we claim that for every $c \in \C$, there exists a matching of size $\mu^2 \alpha n/6$ in $\Vres$ w.h.p. Indeed, for $c \in \C$, by \Cref{def:pseudo} (\ref{pseudo:match}), there is a matching in color $c$ of size $\alpha n/3$; by the Chernoff bound for the hypergeometric distribution, there are at least $\mu^2 \alpha n/6$ edges of this matching in $\Vres$ with probability at least $1-\exp(-\Omega(\mu^2\alpha n))$. The claim follows from a union bound on choice of $c$.

Second, we claim that for every $u \in V$ and $\star \in \arcset$, there exists $W\subseteq \Vres$ of size $|W|\geq \mu^2\alpha n/8$ such that $\{(uw)^{\star}\}_{w\in W}$ forms a rainbow collection in $E(D)$ with high probability. Indeed, for a given $u$ and $\star$, by \Cref{def:pseudo} (\ref{pseudo:colordiversity}), $|\C^{\star}(u)|\geq \alpha n/2$; by the Chernoff bound for the hypergeometric distribution, there are at least $\mu\alpha n/4$ colors $c\in \Cres\cap \C^{\star}(u)$, and therefore at least $\mu^2\alpha n/8$ vertices $v\in N^{\star}_{\Cres}(u)$ such that $\{(uv)^{\star}\}$ forms a rainbow collection, with probability at least $1-\exp(-\Omega(\mu \alpha n))$. The claim follows from a union bound on the choice of $u$ and $\star$.

Now, fix $\Vres$ and $\Cres$ with the above two properties. Let $u,v\in V$ and $c\in\C$, $\Vres'\subseteq \Vres$, and $\Cres'\subseteq \Cres$ each of size at least $(\mu-\zeta)n$, and $\star\in\arcset^7$. By the first property above, $\Vres'$ has a matching in color $c$ of size $(\mu^2\alpha/6 - \zeta )n$, so we can find a matching edge $(xy)^{\star_4}$ with $x,y\in\Vres'$ and $\C((xy)^{\star_4})=c$. 

We apply the second property above to $(u,\star_1)$, $(x,-\star_3)$, $(y,\star_5)$, and $(v,-\star_7)$ to get collections $W_1,W_2,W_3,W_4 \subseteq \Vres$, respectively as in the property. Since $|W_i \cap \Vres'|\geq \mu^2\alpha n/8 - \zeta n \gg 2\ep n$ and $|\Cres'|\geq (\mu -\zeta)n\gg 2\ep n$, we apply \Cref{def:pseudo} (\ref{pseudo:edgecolor}) to get vertex disjoint edges $(w_1w_2)^{\star_2}$ and $(w_3w_4)^{\star_6}$ with distinct colors in $\Cres'$ where the vertices of the collection $w_1,\ldots, w_4$ are pairwise distinct and $w_i\in W_i$. Thus the path $uw_1w_2xyw_3w_4v$ is a rainbow path of length seven with orientation $\star$ and with colors in $\Cres\cup \{c\}$, using the color $c$. 
\end{proof}

\begin{proof}[Proof of \Cref{thm:main}]
Let $1/n\ll1/C\ll\ep\ll \eta\ll \mu\ll \nu\ll \alpha$.
Let $D_0$ be an $n$-vertex digraph with minimum semi-degree $\delta^0(D_0)\geq \alpha n$, and let $D = D_0\cup D(n,C/n)$ be uniformly edge-colored from $[n]$.
By \Cref{lem:ec-pseudorandom} and \Cref{lem:cg-pseudorandom}, we have that $D$ is $(n,\alpha,\ep)$-edge-color pseudorandom and $\nu$-color-gadget pseudorandom with high probability. For the remainder of the proof we assume $D$ satisfies both of these pseudorandom conditions.

We first present the complete proof for the rainbow spanning cycle, and describe afterwards how to prove the existence of cycles of other lengths. Let $\star\in \arcset^n$ be arbitrary. Our goal is to find a rainbow Hamilton cycle with orientation $\star$.

By \Cref{lem:res}, there exists $\Vres\subseteq V$ and $\Cres\subseteq \C$ both of size $(\mu +6\eta)n$ satisfying the hypotheses of the lemma with $\zeta = 6\eta$. Let $\gamma_1=2825(3\mu n)-3$ and $\gamma_2=2784(3\mu n)-3$. By \Cref{lem:vglobalabsorber}, there exists a $(\binom{\Vres}{\mu n},(\star_{1},\ldots,\star_{\gamma_1 n}))$-absorber $(A_{\mathrm{vtx}},s_{\mathrm{vtx}},t_{\mathrm{vtx}})$. By \Cref{lem:cglobalabsorber}, there exists a $(\binom{\Cres}{\mu n},(\star_{\gamma_1n+4},\star_{(\gamma_1+\gamma_2)n+3}))$-absorber $(A_{\mathrm{col}},s_{\mathrm{col}},t_{\mathrm{col}})$ which is vertex-disjoint and color-disjoint from $A_{\mathrm{vtx}}$. We apply \Cref{lem:connecting} to find a rainbow path $P$ from $t_{\mathrm{vtx}}$ to $s_{\mathrm{col}}$ of length three with orientation $(\star_{\gamma_1 n+1},\star_{\gamma_1 n+2},\star_{\gamma_1 n+3})$ and with internal vertices in $V(D)\setminus (V(A_{\mathrm{vtx}})\cup V(A_{\mathrm{col}}))$ and colors in $\C(D)\setminus (\C(A_{\mathrm{vtx}})\cup \C(A_{\mathrm{col}}))$, since $\mu \ll \nu$. Let $A=A_{\mathrm{vtx}}\cup P \cup A_{\mathrm{col}}$. Note that
\[ |V(A)| = \gamma_1 n + 1 + (|\Vres| - \mu n) + 2 + \gamma_2 n + 1 = (\gamma_1 + \gamma_2 + 6\eta)n + 4 \]
and
\[ |\C(A)| = \gamma_1 n + 3 + \gamma_2 n + (|\Cres|-\mu n) = (\gamma_1 + \gamma_2 + 6\eta)n + 3 .\]

By \Cref{lem:longpath}, there exists a rainbow path $Q$ with vertices in $V(D)\setminus V(A)$ and colors in $\C(D)\setminus C(A)$ with orientation $(\star_{(\gamma_1+\gamma_2)n+11},\ldots,\star_{(1-7\eta)n+7})$, so that
\[ |V(Q)| = (1-7\eta)n + 7 - (\gamma_1+\gamma_2)n - 11 + 2 = (1-\gamma_1-\gamma_2-7\eta)n - 2 \]
and
\[ |\C(Q)| = (1-7\eta)n + 7 - (\gamma_1+\gamma_2)n - 11 + 1 = (1-\gamma_1-\gamma_2-7\eta)n - 3 .\]
Let $Q$ have start vertex $x$ and end vertex $y$. Let $V_{\mathrm{rem}}=V\setminus (V(A)\cup V(Q))$ be the set of remaining vertices, which has size
\[ |V_{\mathrm{rem}}| = n - (\gamma_1 + \gamma_2 + 6\eta)n - 4 - (1-\gamma_1-\gamma_2-7\eta)n + 2 = \eta n - 2 ,\]
and let $\C_{\mathrm{rem}}=\C(D)\setminus (\C(A)\cup \C(Q))$ be the set of remaining colors, which has size
\[ |\C_{\mathrm{rem}}| =  n - (\gamma_1 + \gamma_2 + 6\eta)n - 3 - (1-\gamma_1-\gamma_2-7\eta)n + 3 = \eta n .\]

We repeatedly apply \Cref{lem:res} to incorporate $V_{\mathrm{rem}}$ and $\C_{\mathrm{rem}}$ into the cycle. Enumerate the vertices of $V_{\mathrm{rem}}\cup\{s_{\mathrm{vtx}},t_{\mathrm{col}}\}$ as $v_1,\ldots, v_{\eta n}$ with $v_1=t_{\mathrm{col}}$ and $v_{\eta n}=s_{\mathrm{vtx}}$. Furthermore, enumerate the colors of $\C_{\mathrm{col}}$ by $c_1,\ldots, c_{\eta n}$ arbitrarily. By \Cref{lem:res}, there exists a path $P_1$ of length seven from $v_1 = t_{\mathrm{col}}$ to $x$ with orientation $(\star_{(\gamma_1+\gamma_2)n+4}, \dots, \star_{(\gamma_1+\gamma_2)n+10})$ with all $6$ interior vertices in $\Vres$, containing color $c_1$, and all $6$ other colors in $\Cres$. Similarly, by \Cref{lem:res}, there exists a path $P_2$ of length seven from $y$ to $v_2$ with orientation $(\star_{(1-7\eta)n+8}, \dots, \star_{(1-7\eta)n+14})$, vertex- and color-disjoint from $P_1$, with all $6$ interior vertices in $\Vres$, containing color $c_2$, and all $6$ other colors in $\Cres$. For each $2 \leq i \leq \eta n-1$, we similarly apply \Cref{lem:res} to find a path $P_{i+1}$ of length seven from $v_i$ to $v_{i+1}$ with orientation $(\star_{(1-7\eta)n+7i+1},\dots, \star_{(1-7\eta)n+7i+7})$, vertex- and color-disjoint from the previous $P_i$s, with all $6$ interior vertices in $\Vres$, containing color $c_{i+1}$, and all $6$ other colors in $\Cres$. This is possible because the amount of vertices of $\Vres$ and colors of $\Cres$ that we have used by step $i$ is $6i \leq 6 \eta n = \zeta n$.

Let $\Vres'$ be the remaining vertices of $\Vres$ that do not appear on any $P_i$, and similarly let $\Cres'$ be the remaining colors of $\Cres$ that do not appear on any $P_i$, so that $|\Vres'| = |\Cres'| = \mu n$. We activate $A_{\mathrm{vtx}}$ to absorb $\Vres'$ and $A_{\mathrm{col}}$ to absorb $\Cres'$; concatenating all of the relevant paths completes the rainbow cycle with orientation $\star$.

To find a cycle of length $k$ with $(1-3\ep )n \leq k \leq n$, we follow the same proof as the spanning cycle, except that we truncate $Q$ by removing $n-k$ vertices from the end. Since the remainder sets of vertices and colors have $n-k$ more elements than needed, we simply ignore that many vertices and colors when absorbing the remainders with the reservoirs. We finish by activating the global absorbers and concatenating the relevant paths.

To find a cycle of length $k$ with $3 \leq k <(1-3\ep)n$, we do not need to use absorption. Let $\star\in\arcset^k$. We apply \Cref{lem:longpath} to find a rainbow path $P$ of length $k-3$ with orientation $(\star_1,\ldots, \star_{k-3})$. Next, we apply \Cref{lem:connecting} to connect the endpoints of $P$ with a rainbow path $Q$ of length three with orientation $(\star_{k-2},\star_{k-1},\star_k)$ which is internally vertex-disjoint and color-disjoint from $P$. Then $P\cup Q$ is a rainbow cycle of length $k$ with orientation $\star$.

To find a cycle of length $2$, we do not use the pseudorandomness conditions at all, and instead argue directly. Let $\star\in\arcset^2$. Without loss of generality, we take $\star_1=\rightarrow$. It suffices to find a deterministic edge $(vw)^{\rightarrow}\in D_0$ such that $(wv)^{\star_2}\in D(n,C/n)$ and $\C((wv)^{\star_2})\neq \C((vw)^{\rightarrow})$. Let $A_\star$ be the event that this does not hold. Then 
\[\mathbb{P}(A_\star)\leq \left( 1-\frac{C}{n}+\frac{1}{n} \right)^{\alpha n^2}\leq e^{-(C-1)\alpha n}=o(1) ,\]
so for either choice of $\star_2$, we can find a rainbow cycle of orientation $\star$.
\end{proof}

\section{Acknowledgments}

This work was completed for an honors thesis of the second author. We thank Igor Araujo for a helpful discussion and comments which improved the exposition of the paper.


\end{document}